\theoremstyle{plain}
\newtheorem{thm}{Theorem}
\newtheorem{theorem}{Theorem}[section]
\newtheorem{lemma}[theorem]{Lemma}
\newtheorem{proposition}[theorem]{Proposition}
\newtheorem{corollary}[theorem]{Corollary}
\theoremstyle{definition}
\newtheorem{remark}[theorem]{Remark}
\newtheorem{example}[theorem]{Example}
\newcommand\bG{{\mathbb G}}
\newcommand\bV{{\mathbb V}}
\newcommand\bZ{{\mathbb Z}}
\newcommand\bfD{\mathbf{D}}
\newcommand\bfF{\mathbf{F}}
\newcommand\bfHom{\mathbf{Hom}}
\newcommand\bfKer{\mathbf{Ker}}
\newcommand\bfR{\mathbf{R}}
\newcommand\cO{{\mathcal O}}
\newcommand\cP{{\mathcal P}}
\newcommand\fm{\mathfrak{m}}
\newcommand\rD{{\rm D}}
\newcommand\ab{{\rm ab}}
\newcommand\aff{{\rm aff}}
\newcommand\ant{{\rm ant}}
\newcommand\charac{{\rm char}}
\newcommand\diag{{\rm diag}}
\newcommand\gp{{\rm gp}}
\newcommand\gr{{\rm gr}}
\newcommand\id{{\rm id}}
\newcommand\pr{{\rm pr}}
\newcommand\red{{\rm red}}
\newcommand\sab{{\rm sab}}
\DeclareMathOperator\Ad{Ad}
\newcommand\Aut{{\rm Aut}}
\DeclareMathOperator\End{End}
\newcommand\Gal{{\rm Gal}}
\DeclareMathOperator\GL{GL}
\DeclareMathOperator\Hom{Hom}
\renewcommand\Im{{\rm Im}}
\DeclareMathOperator\Lie{Lie}
\DeclareMathOperator\Spec{Spec}
\newcommand\Stab{{\rm Stab}}
\DeclareMathOperator\Sym{Sym}
\numberwithin{equation}{section}
\title[On homomorphisms of algebraic groups]
{Homomorphisms of algebraic groups: representability and rigidity}
\author{Michel Brion}
\date{}
\address{Universit\'e Grenoble Alpes, 
Institut Fourier, CS 40700, 38058 Grenoble Cedex 9, France}
\begin{document}

\begin{abstract}
Given two algebraic groups $G$, $H$ over a field $k$, we
investigate the representability of the functor of morphisms 
(of schemes) $\bfHom(G,H)$ and the subfunctor of homomorphisms
(of algebraic groups)
$\bfHom_{\gp}(G,H)$. We show that $\bfHom(G,H)$ is represented 
by a group scheme, locally of finite type, if the $k$-vector space 
$\cO(G)$ is finite-dimensional; the converse holds if $H$ is 
not \'etale. When $G$ is linearly reductive and $H$ is smooth, 
we show that $\bfHom_{\gp}(G,H)$ is represented by a smooth 
scheme $M$; moreover, every orbit of $H$ acting by conjugation 
on $M$ is open.
\end{abstract}

\maketitle

\section{Introduction}
\label{sec:int}

The starting point of this article is the classification problem
for actions of an algebraic group $G$ on an algebraic variety $X$. 
When $X$ is proper over the ground field $k$,  its automorphism
functor is represented by a locally algebraic group $\Aut_X$ (i.e., 
a group scheme, locally of finite type), see \cite[Thm.~3.7]{MO}.  
Then the $G$-actions on $X$ correspond bijectively to the homomorphisms 
$f: G \to \Aut_X$, and the above problem is equivalent to classifying 
these homomorphisms up to conjugation by $\Aut(X) = \Aut_X(k)$.
This motivates the following questions: 

\begin{itemize}
\item 
Given an algebraic group $G$ and a locally algebraic group $H$, 
is the functor of homomorphisms $\bfHom_{\gp}(G,H)$ represented 
by a scheme $M$? 

\item
If so, $M$ is equipped with an action of $H$ by conjugation
on the target; how to describe the orbits?
\end{itemize}

When $G$ is of multiplicative type and $H$ is smooth 
and affine, the representability of $\bfHom_{\gp}(G,H)$ 
is due to Grothendieck; he showed in addition that 
the representing scheme $M$ is smooth and the morphism
\begin{equation}\label{eqn:graph}  
H \times M \longrightarrow M \times M, 
\quad (h,f) \longmapsto (h f h^{-1}, f) 
\end{equation}
is smooth as well (see \cite[Exp.~XI, Thm.~4.2, Cor.~5.1]{SGA3};
these results are obtained over an arbitrary base).  
As a consequence, for any field extension $K/k$ and any 
$f \in M(K) = \Hom_{K-\gp}(G_K,H_K)$, the orbit map $H_K \to M_K$, 
$h \mapsto h f h^{-1}$ is smooth,
and hence every orbit is open. This may be viewed as a rigidity 
property for actions of group schemes of multiplicative type: 
the only way to deform such an action is via conjugation on the
target.

For $G$ reductive and $H$ smooth affine, the representability 
of $\bfHom_{\gp}(G,H)$ was obtained by Demazure; in characteristic 
$0$, he also showed that the representing scheme $M$ is smooth 
(see \cite[Exp.~XXIV, Cor.~7.2.3, Prop.~7.3.1]{SGA3}; 
these results hold again over an arbitrary base). 
Further rigidity properties of $M$ were obtained by Margaux when 
$G$ is linearly reductive (see \cite{Margaux} and Remark
\ref{rem:margaux}).

Note that the above scheme $M$ is not necessarily of finite type;
for example, if $G = H = \bG_m$ then $M$ is the constant scheme
$\bZ_k$. Much more elaborate examples occur in recent work of
Lesieutre (see \cite[Lem.~12, Cor.~15]{Lesieutre}) and Dinh \& Oguiso
(see \cite[Lem.~4.5]{DO}): they constructed smooth complex 
projective varieties $X$ such that $\Aut(X)$ is discrete and has 
infinitely many conjugacy classes of involutions.

Yet if $\bfHom_{\gp}(G,H)$ is represented by a scheme $M$, 
then $M$ is locally of finite type. Indeed, when viewed as 
a functor from $k$-algebras to sets, $\bfHom_{\gp}(G,H)$ 
commutes with direct limits as a consequence of \cite[IV.8.8.2]{EGA}; 
thus, the assertion follows from the characterization of schemes 
locally of finite type in terms of their functors of points obtained 
in \cite[IV.8.14.2]{EGA}. Likewise, if the functor of morphisms
(of schemes) $\bfHom(G,H)$ is represented by a scheme $N$, 
then $N$ is locally of finite type.

The functors $\bfHom(G,H)$ and $\bfHom_{\gp}(G,H)$ 
usually contain more information than their sets of 
$K$-valued points for all field extensions $K/k$. For example, 
every morphism $f : \bG_{a,K} \to \bG_{m,K}$ is constant, but 
$\bfHom(\bG_a,\bG_m)$ is not representable, nor is 
$\bfHom_{\gp}(\bG_a,\bG_m)$ (see e.g. 
\cite[Exercise 1.1]{Milne}). Also, if $G$ and $H$ are linear
algebraic groups over an algebraically closed field $k$ of 
characteristic $0$, then the set of homomorphisms $\Hom_{\gp}(G,H)$
has a natural structure of affine ind-variety of finite dimension;
if in addition $G$ is unipotent, then one even gets an affine variety 
(by results of Furter \& Kraft, see \cite[Lem.~8.2.1, Prop.~8.4.1]{FK}). 
But in the latter case, $\bfHom_{\gp}(G,H)$ is representable if 
and only if $H$ is an extension of a finite group by a unipotent one.

With these motivations and examples in mind, we consider 
in this article the issues of representability of $\bfHom(G,H)$ 
and $\bfHom_{\gp}(G,H)$, where again $G$ is an algebraic group, 
and $H$ a locally algebraic group. Observe that $\bfHom(G,H)$ is
a group functor relative to pointwise multiplication in $H$,
and is the semi-direct product of the normal subgroup functor
$\bfHom(G,H; e_G \mapsto e_H)$ of pointed homomorphisms
by the group $H$ of constant morphisms (see Lemma \ref{lem:sdir}
for details). Also, it is easy to show that 
$\bfHom_{\gp}(G,H)$ is a closed subfunctor of $\bfHom(G,H)$ 
(Lemma \ref{lem:closed}). 

The case of an \'etale group scheme $H$ is easy as well: 
then $\bfHom(G,H)$ is represented by an \'etale scheme 
(Proposition \ref{prop:lae}). Thus, $\bfHom_{\gp}(G,H)$
is represented by an \'etale scheme as well.
So we may exclude this case in our first main result, 
which handles the representability of $\bfHom(G,H)$:

\begin{thm}\label{thm:rep}
Let $G$ be an algebraic group, and $H$ a locally algebraic group.
Assume that $H$ is not \'etale. Then $\bfHom(G,H)$ is represented 
by a locally algebraic group if and only if the vector space
$\cO(G)$ is finite-dimensional.
\end{thm}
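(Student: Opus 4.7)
I prove the two implications separately; the forward direction is a tangent space computation, and the reverse direction reduces, via Weil restriction, to a classical representability result modulo one technical point. For necessity, suppose $\bfHom(G,H)$ is represented by a locally algebraic group $N$. Being locally of finite type over $k$, $N$ is locally Noetherian, and its tangent space at every $k$-rational point is finite-dimensional. Let $c_e\in N(k)$ denote the constant morphism $G\to\{e_H\}\hookrightarrow H$. A tangent vector at $c_e$ is a morphism
\begin{equation*}
  \tilde c\colon G\times_k\Spec k[\epsilon]/(\epsilon^2) \longrightarrow H
\end{equation*}
reducing to $c_e$ modulo $\epsilon$. Replacing $H$ by an affine open neighbourhood of $e_H$ and unwinding the ring-theoretic description, $\tilde c$ is equivalent to a $k$-linear derivation $\cO_{H,e_H}\to\cO(G)$, with $\cO(G)$ viewed as an $\cO_{H,e_H}$-module via the residue $e_H$. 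Such derivations are classified by $\Hom_k(\fm_{e_H}/\fm_{e_H}^{2},\cO(G))$, so
\begin{equation*}
  T_{c_e} N \;\cong\; \cO(G)\otimes_k \Lie(H).
\end{equation*}
Since $H$ is not \'etale we have $\Lie(H)\neq 0$, and finite-dimensionality of $T_{c_e}N$ forces $\dim_k\cO(G)<\infty$.

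\textbf{Sufficiency, affine case.} Conversely, assume $\cO(G)$ is finite-dimensional, and suppose first that $G$ is affine. Then $G=\Spec A$ with $A=\cO(G)$ a finite-dimensional $k$-algebra, so for every $k$-algebra $R$,
\begin{equation*}
  \bfHom(G,H)(R)\;=\;\Hom_R\bigl(\Spec(A\otimes_k R),\,H_R\bigr)\;=\;H(A\otimes_k R)\;=\;\Res_{A/k}(H)(R),
\end{equation*}
identifying $\bfHom(G,H)$ with the Weil restriction $\Res_{A/k}(H)$. Since $A$ is a finite-dimensional Artinian $k$-algebra and $H$ is locally of finite type, $\Res_{A/k}(H)$ is representable by a locally algebraic $k$-group scheme, obtained by gluing Weil restrictions over an affine open cover of $H$ (Artinian base $A$ makes the standard obstruction to representability vanish). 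This finishes the affine case.

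\textbf{General $G$ and main obstacle.} For general algebraic $G$ with $\cO(G)$ finite-dimensional, the affinization morphism $\alpha\colon G\to G_{\aff}:=\Spec\cO(G)$ realizes $G$ as an extension
\begin{equation*}
  1\to G_{\ant}\to G\to G_{\aff}\to 1
\end{equation*}
of the finite group scheme $G_{\aff}$ by the antiaffine kernel $G_{\ant}$. My plan is to present $\bfHom(G,H)$ as a closed subfunctor of $\bfHom(G_{\ant},H)\times\bfHom(G_{\aff},H)$ cut out by the compatibility conditions forced by the extension, reducing representability to that of $\bfHom(G_{\ant},H)$ together with the affine case already handled. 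The hardest step I anticipate is this last representability, since $G_{\ant}$ is typically a nontrivial semi-abelian-type object and $H$ need not be quasi-projective. To overcome this I would exploit the smoothness, connectedness, commutativity and centrality of $G_{\ant}$ in $G$, together with the fact that any morphism from the connected $G_{\ant,R}$ lands in the algebraic connected component $H_R^{0}$, so that Chevalley's structure theorem applied to $H^{0}$ reduces the question to a classical Hom-scheme representability statement over a quasi-projective target.
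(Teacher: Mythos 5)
Your necessity argument (the tangent space at the constant map is $\cO(G)\otimes_k\Lie(H)$, which must be finite-dimensional if the functor is representable and $\Lie(H)\neq 0$) and your affine case (Weil restriction along the finite $k$-algebra $\cO(G)$) are both correct and coincide with the paper's Lemma \ref{lem:nonrig}(i), Lemma \ref{lem:txh} and Lemma \ref{lem:fla}. The gap is in the passage to non-affine $G$, and it occurs at two points. First, the extension $1\to G_{\ant}\to G\to G^{\aff}\to 1$ need not split, and there is no restriction map $\bfHom(G,H)\to\bfHom(G^{\aff},H)$, since $G^{\aff}$ is a quotient and not a subscheme of $G$; nor is a morphism of schemes $G\times S\to H$ determined by its restriction to $G_{\ant}\times S$. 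So $\bfHom(G,H)$ is not a closed subfunctor of $\bfHom(G_{\ant},H)\times\bfHom(G^{\aff},H)$ in any evident way. The paper circumvents this with a nontrivial structure result (\cite[Thm.~1.1]{Bri15}): there is a finite subgroup scheme $F'\subset G$ with $G=G_{\ant}F'$, so that $G$ is a quotient of $G_{\ant}\rtimes F'$ by a finite group scheme; by descent (Corollary \ref{cor:descent}) one reduces to $G=G_{\ant}\rtimes F'\simeq G_{\ant}\times F'$ as schemes, whence $\bfHom(G,H)\simeq\bfHom(G_{\ant},\bfHom(F',H))$ and one is left with the anti-affine case.

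Second, and more seriously, your plan for the anti-affine case cannot work as stated. Grothendieck's Hom-scheme representability requires the source to be projective (at least proper) over the base, and an anti-affine group is proper only when it is an abelian variety; in general $G_{\ant}$ is a nontrivial extension of an abelian variety by a torus or a vector group, so the ``classical Hom-scheme representability statement'' does not apply (recall that $\bfHom(\bG_a,\bG_m)$ is not representable, so properness of the source cannot be dropped). The missing ingredient is the rigidity lemma for anti-affine schemes (Lemma \ref{lem:rig}): if $X$ is geometrically reduced of finite type with $\cO(X)=k$, then any morphism $X\times Y\to Z$ that is constant on one geometric fibre $X\times\{y_0\}$ factors through $\pr_Y$. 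Applied twice, this yields Proposition \ref{prop:rig}: for $G$ anti-affine every pointed morphism $G\times S\to H$ is automatically a homomorphism and is locally constant in $S$, so $\bfHom(G,H;e_G\mapsto e_H)=\bfHom_{\gp}(G,H)$ is represented by a form of $\bZ^n_k$; combined with the semi-direct product decomposition $\bfHom(G,H)\simeq\bfHom(G,H;e_G\mapsto e_H)\rtimes H$ of Lemma \ref{lem:sdir}, this completes the proof. Without this rigidity input (or an equivalent substitute), your reduction via Chevalley's theorem on $H^0$ does not close.
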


This can be reformulated by using the affinization theorem
(see \cite[III.3.8.2]{DG}): for any algebraic group $G$, 
the affine scheme $G^{\aff} = \Spec \cO(G)$ is 
an algebraic group and the canonical morphism $G \to G^{\aff}$ is 
a faithfully flat homomorphism. Moreover, its kernel $N$ is 
smooth, connected, central in $G^0$ (in particular, commutative)
and satisfies $\cO(N) = k$; we say that $N$ is anti-affine.
As a consequence, $N$ is the largest anti-affine subgroup of
$G$; we denote it by $G_{\ant}$.  

Thus, Theorem \ref{thm:rep} asserts that $\bfHom(G,H)$ is
represented by a locally algebraic group if and only if $G$ 
is an extension of a finite group scheme by an anti-affine one. 
The proof begins with a reduction to the case where $G$ is 
anti-affine; we then show that $\bfHom(G,H; e_G \mapsto e_H)$ 
is equal to $\bfHom_{\gp}(G,H)$ and is represented by 
a form of some constant group scheme $\bZ^n_k$ 
(Proposition \ref{prop:rig}). 
For this, we use a rigidity lemma for anti-affine schemes 
(Lemma \ref{lem:rig}), a version of a result of C.~and 
F.~Sancho de Salas (see \cite[Thm.~1.7]{SS}). 

Our second main result gives a sufficient condition for the 
homomorphism functor to be representable. To state it, we 
introduce a variant of the classical notion of linear 
reductivity. We say that an algebraic group $G$ (possibly 
non-affine) is \emph{linearly reductive} if every $G$-module 
is semi-simple; this is equivalent to the affinization $G^{\aff}$ 
being linearly reductive. Examples of linearly reductive groups 
include group schemes of multiplicative type and semi-abelian
varieties; see the beginning of Section \ref{sec:prooflred} 
for more on this notion.

We may now state our second main result in a simplified version; 
see Theorem \ref{thm:sred} for the full, more technical statement.

\begin{thm}\label{thm:lred}
Let $G$ be a linearly reductive algebraic group, and $H$ a locally 
algebraic group. Then $\bfHom_{\gp}(G,H)$ is represented by a smooth 
scheme $M$. Moreover, the morphism (\ref{eqn:graph}) is smooth.
\end{thm}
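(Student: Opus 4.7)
The plan is to prove the representability of $\bfHom_{\gp}(G,H)$ and the smoothness of the representing scheme $M$ (together with that of (\ref{eqn:graph})) separately, using the affinization exact sequence for $G$ for the first, and the formal criterion of smoothness combined with cohomological vanishing for the second.

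For representability, I would start from the affinization exact sequence
\[
1 \longrightarrow G_{\ant} \longrightarrow G \longrightarrow G^{\aff} \longrightarrow 1,
\]
which, for $G$ linearly reductive, exhibits $G$ as an extension of a linearly reductive affine group $G^{\aff}$ (reductive if $\charac k = 0$; an extension of a finite \'etale group of order prime to $p$ by a group of multiplicative type in characteristic $p$) by a semi-abelian group $G_{\ant}$. Homomorphisms out of $G_{\ant}$ are handled by Proposition \ref{prop:rig}, which shows that $\bfHom_{\gp}(G_{\ant},H)$ is represented by a form of some $\bZ^n_k$. Homomorphisms out of $G^{\aff}$ reduce, via the Chevalley-type decomposition of $H^0$, to the case when the target is affine and smooth, which is handled by the theorems of Grothendieck \cite[Exp.~XI]{SGA3} and Demazure \cite[Exp.~XXIV]{SGA3}. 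A homomorphism $G\to H$ is then determined by its restriction to $G_{\ant}$ together with a compatible morphism descending from $G^{\aff}$, and representability of $\bfHom_{\gp}(G,H)$ follows by a fiber-product construction over these two representable pieces (closedness of the compatibility condition being provided by Lemma \ref{lem:closed}).

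For smoothness of $M$, I would apply the formal criterion. Let $A' \twoheadrightarrow A$ be a square-zero extension of Artinian local $k$-algebras with kernel $I$, and let $f : G_A \to H_A$ be an $A$-point of $M$. Standard deformation theory for group-scheme homomorphisms identifies the obstruction to lifting $f$ to $G_{A'} \to H_{A'}$ with a class in the second Hochschild cohomology $H^2_0(G, I \otimes_k \Lie H)$, where $G$ acts on $\Lie H$ via $\Ad \circ f$, while the set of lifts modulo the translation action of $I \otimes_k \Lie H$ is a torsor under $H^1_0(G, I \otimes_k \Lie H)$. Linear reductivity of $G$ is equivalent to the vanishing of $H^i_0(G,V)$ for every $i \ge 1$ and every $G$-module $V$; both the obstruction and the ambiguity therefore disappear, so lifts exist and $M$ is smooth. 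Applied to the pair $(h,f) \in (H \times M)(A)$ in place of $f$, the same criterion gives the smoothness of (\ref{eqn:graph}): equivalently, the centralizer $C_H(f(G))$ is smooth for every geometric $f$, again by the vanishing of $H^1_0(G,\Lie H)$.

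The principal obstacle lies in the representability step, specifically in the assembly of the two pieces under the weaker hypothesis that $H$ is merely locally algebraic (neither assumed affine nor smooth). The existing literature treats the affine and anti-affine cases in isolation under stronger hypotheses on $H$, and the substantive work is to combine them cleanly---very likely via the more technical Theorem \ref{thm:sred} alluded to in the statement. The smoothness assertions, by contrast, reduce directly to the cohomological vanishing that characterizes linear reductivity, once the deformation-theoretic framework is correctly set up.
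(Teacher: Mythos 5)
Your smoothness argument is a legitimate alternative to the paper's: the paper does not use the infinitesimal lifting criterion with $H^2$-obstructions, but instead (Lemma \ref{lem:smooth}) shows that the vanishing of $H^1(G_K,\Lie(H_K))$ forces each orbit map $\gamma_f$ to have surjective differential and smooth stabilizer, then runs a generic-flatness and graded-ring argument to conclude that every orbit $H\cdot f$ is open and smooth and that $\cO_{M,f}\simeq\cO_{H\cdot f,f}$; smoothness of $M$ and of (\ref{eqn:graph}) both fall out of this orbit geometry. Your deformation-theoretic route would work too, with one caveat: smoothness of $\gamma_f$ is not ``equivalent'' to smoothness of the centralizer $C_H(f(G))$ alone --- you also need surjectivity of the differential $T_{e_H}(\gamma_f)\to T_fM$ (i.e.\ the vanishing of $H^1$, Lemma \ref{lem:bz}) or an openness-of-orbit statement to get flatness; with $M$ already known smooth this is easily supplied, but it should be said.

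The genuine gap is in the representability step, and it is not merely the ``assembly'' difficulty you flag but the decomposition itself. A homomorphism $f:G\to H$ is \emph{not} determined by its restriction to $G_{\ant}$ together with ``a compatible morphism descending from $G^{\aff}$'': the affinization sequence $1\to G_{\ant}\to G\to G^{\aff}\to 1$ is in general non-split, and $f$ descends to $G^{\aff}=G/G_{\ant}$ only when $f|_{G_{\ant}}$ is trivial. For a nontrivial restriction there is no second ``piece'' living on $G^{\aff}$, so the proposed fiber product of $\bfHom_{\gp}(G_{\ant},H)$ and $\bfHom_{\gp}(G^{\aff},H)$ does not see all of $\bfHom_{\gp}(G,H)$. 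The paper's substitute is Proposition \ref{prop:sr}(ii): a semi-reductive (in particular linearly reductive) $G$ sits in an exact sequence $1\to F_1\to G_1\times G_2\to G\to F_2\to 1$ with $G_1$ anti-affine, $G_2$ reductive and $F_1,F_2$ finite --- an honest direct-product covering obtained from the Rosenlicht decomposition $G^0=G_{\ant}L$ (plus Mostow's Levi decomposition in characteristic $0$), not from the affinization quotient. With a product as source, Lemma \ref{lem:hom} embeds $\bfHom_{\gp}(G_1\times G_2,H)$ as a closed subfunctor of $\bfHom_{\gp}(G_1,H)\times\bfHom_{\gp}(G_2,H)$, descent along the finite isogeny (Corollary \ref{cor:descent}) and the splitting $G=G_0F$ of \cite{Bri15} handle the finite pieces, and the two factors are then treated by Proposition \ref{prop:rig} and Corollary \ref{cor:red} respectively. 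Without this structural input your representability argument does not close.
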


Conversely, if the assertions of Theorem \ref{thm:lred}
hold for an algebraic group $G$ and all affine algebraic groups 
$H$, then $G$ is linearly reductive; see Remark \ref{rem:converse}.  
So this theorem yields a version of Grothendieck's representability 
and rigidity results mentioned above, which is close to optimal 
for group schemes over a field. We refer to \cite[Thm.~2]{Romagny} 
for a generalization of the representability theorem in its original 
setting of group schemes of multiplicative type over an arbitrary base.  

This article is organized as follows. Section \ref{sec:fun} contains
preliminary results on functors of (homo)morphisms; some of them 
are obtained in \cite[Exp.~I]{SGA3} in a much greater generality.
The tangent spaces to these functors are described in Section
\ref{sec:tangent} by using constructions and results from
\cite[Exp.~II]{SGA3}. Section \ref{sec:rep} collects representability
results for these functors, when restricted to various classes of group
schemes. In Section \ref{sec:proofrep}, we first prove the rigidity 
lemma mentioned above, and then deduce Theorem \ref{thm:rep}.
Theorem \ref{thm:sred} is stated and proved in the final Section 
\ref{sec:prooflred}, after some preliminary results on linear 
reductivity and a closely related notion of semi-reductivity.

\medskip

\noindent
{\bf Notation and conventions.}
We consider schemes over a field $k$ of characteristic 
$p \geq 0$, with separable closure $k_s$ and algebraic closure
$\bar{k}$. Morphisms and products are understood to be 
over $k$ unless otherwise stated. Schemes are assumed to be 
separated and locally of finite type throughout.

The structural morphism of a scheme $X$ is denoted by 
$\pi_X : X \to \Spec(k)$. Given a field extension $K/k$,
we denote by $X_K$ the $K$-scheme obtained from $X$ by 
the base change $\Spec(K) \to \Spec(k)$.

Group schemes are assumed to be locally algebraic in view of
our convention on schemes. 
Morphisms of group schemes will also be called homomorphisms.
The neutral element of a group scheme $G$ is denoted by $e_G$. 
An algebraic group is a group scheme of finite type.

We will freely use some of the functorial language of algebraic
geometry developed in \cite[I.1, I.2]{DG} (see also 
\cite[Chap.~VI]{EH}). We identify every scheme $S$ with 
its functor of points $h_S$.

\section{Hom functors}
\label{sec:fun}

We first recall some basic notions and results from 
\cite[Exp.~I, \S 7]{SGA3} in our special setting.
Given two schemes $X$, $Y$, we denote by $\bfHom(X,Y)$ the
contravariant functor from schemes to sets which sends every
scheme $S$ to $\Hom_S(X \times S, Y \times S)$, and every morphism 
of schemes $u : S' \to S$ to the pullback map
\[ \Hom_S(X \times S,Y \times S) \longrightarrow 
\Hom_{S'}(X \times S',Y \times S'). \]
We may identify $\bfHom(X,Y)(S)$ with $\Hom(X \times S,Y)$
by sending every morphism $f : X \times S \to Y$ to 
$(f, \pr_S) : X \times S \to Y \times S$. This identifies 
$\bfHom(X,Y)(u)$ with the map 
\[ \Hom(X \times S,Y) \longrightarrow \Hom(X \times S',Y),
\quad f \longmapsto f \circ (\id, u) \]
for any $u$ as above. As a consequence,  
$\bfHom(\Spec(k),Y)$ may be identified with $Y$.

The formation of $\bfHom(X,Y)$ commutes with base change
by field extensions $K/k$. Also, every morphism of schemes 
$\varphi : X' \to X$ induces a morphism of functors
\[ \varphi^* : \bfHom(X,Y) \longrightarrow \bfHom(X',Y) \]
via precomposition with $\varphi$. Likewise, every morphism 
of schemes $\psi : Y \to Y'$ induces a morphism of functors 
\[ \psi_* : \bfHom(X,Y) \longrightarrow \bfHom(X,Y') \]
via postcomposition with $\psi$. For any family of schemes
$(X_i)_{i \in I}$, the inclusions $X_i \to \coprod_{j \in I} X_j$
yield an isomorphism of functors
\[ \bfHom(\coprod_{i \in I} X_i,Y) 
\stackrel{\sim}{\longrightarrow} 
\prod_{i \in I} \bfHom(X_i,Y). \]
Likewise, for any family of schemes 
$(Y_i)_{i \in I}$, the projections 
$\pr_i : \prod_{j \in I} Y_j \to Y_i$ yield an isomorphism 
of functors
\[ \bfHom(X, \prod_{i \in I} Y_i)  
\stackrel{\sim}{\longrightarrow} 
\bfHom(X, \prod_{i \in I} Y_i). \]
We will also freely use the canonical isomorphism of functors
\[ \bfHom(X \times Y,Z) \stackrel{\sim}{\longrightarrow}
\bfHom(X,\bfHom(Y,Z)) \]
for any schemes $X$, $Y$, $Z$ (see 
\cite[Exp.~I, Prop.~1.7.1]{SGA3}). This identifies $\bfHom(Y,Z)$ 
with the Weil restriction functor $\bfR_{Y/k}(Z)$.

Next, recall the following result (a special case of  
\cite[I.2.7.5]{DG}):
 
\begin{lemma}\label{lem:closed}
Let $\psi : Y \to Y'$ be a closed immersion of schemes. Then
the morphism of functors 
$\psi_* : \bfHom(X,Y) \to  \bfHom(X,Y')$ is a closed immersion.
\end{lemma}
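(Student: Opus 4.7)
The plan is to unpack the definition of a closed immersion of functors: for every scheme $S$ and every morphism $S \to \bfHom(X,Y')$ --- equivalently, every morphism $f : X \times S \to Y'$ --- I must exhibit the fiber product $\bfHom(X,Y) \times_{\bfHom(X,Y')} S$ as a closed subscheme of $S$. A $T$-point of this fiber product is a morphism $g : T \to S$ such that $f \circ (\id_X \times g) : X \times T \to Y'$ factors through $Y$. Since $\psi$ is a closed immersion, its base change $W := (X \times S) \times_{Y'} Y$ is a closed subscheme of $X \times S$, and the factorization condition becomes: $\id_X \times g$ factors through $W$.

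So it suffices to exhibit the largest closed subscheme $Z \subset S$ such that $X \times Z \subset W$ as closed subschemes of $X \times S$. I would construct $Z$ locally and then glue. On affine opens $\Spec(A) \subset X$ and $\Spec(B) \subset S$, the subscheme $W$ is cut out by some ideal $I \subset A \otimes_k B$. Using that $A$ is $k$-free (because $k$ is a field), fix a $k$-basis $\{a_\alpha\}$ of $A$; every element of $A \otimes_k B$ then has a unique expansion $\sum_\alpha a_\alpha \otimes b_\alpha$. Let $J \subset B$ be the ideal generated by all coefficients $b_\alpha$ appearing in elements of $I$. By construction $I \subset A \otimes_k J$, and conversely any ideal $J' \subset B$ satisfying $I \subset A \otimes_k J'$ must contain each such $b_\alpha$ (by uniqueness of expansion, again using freeness of $A$), so $J \subset J'$. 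Thus $J$ is minimal with the property, and $\Spec(B/J)$ is the largest closed subscheme of $\Spec(B)$ over which $X \times Z \subset W$ holds locally.

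To globalize, I would observe that $J$ is characterized by a universal property, hence is independent of the chosen basis and is compatible with localization in $B$; summing over the ideals produced from an affine cover of $X$ captures the condition on all of $X$. These local pieces then glue to a closed subscheme $Z \subset S$, and the universal property yields directly that $g : T \to S$ factors through $Z$ if and only if $\id_X \times g$ factors through $W$, which is the required description of the fiber product.

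The main obstacle is this globalization step. The coefficient-extraction construction depends essentially on $A$ being free as a $k$-module, which is precisely where the hypothesis that the base is a field enters; over a more general base one would need a flatness or properness assumption on $X$. Bookkeeping is then needed to verify that, as one varies the affine cover of $X$, the locally defined ideals cohere across overlaps in $S$ and truly cut out the maximal subscheme satisfying $X \times Z \subset W$.
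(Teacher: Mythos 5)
Your argument is correct: the paper gives no proof of its own here, simply citing \cite[I.2.7.5]{DG}, and your coefficient-ideal construction is essentially the proof of that cited result, specialized to a field base (where the freeness of $\cO(X)$ over $k$ is exactly what makes the ``smallest ideal $J$ with $I \subset A \otimes_k J$'' exist and commute with localization). The local-to-global bookkeeping you flag does go through, since the coefficient ideal is characterized by a universal property and is therefore basis-independent and compatible with localization in $B$.
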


We now consider two morphisms of schemes
$\varphi_1,\varphi_2 : X' \to X$, and their equalizer 
$\bfKer(\varphi_1^*,\varphi_2^*)$, i.e., the subfunctor
of $\bfHom(X,Y)$ such that for any scheme $S$, the set
$\bfKer(\varphi_1^*,\varphi_2^*)(S)$ consists of the morphisms
$f: X \times S \to Y$ such that 
$f \circ (\varphi_1, \id) = f \circ (\varphi_2, \id)$.

\begin{lemma}\label{lem:equal}
With the above notation, $\bfKer(\varphi_1^*, \varphi_2^*)$
is a closed subfunctor of $\bfHom(X,Y)$.
\end{lemma}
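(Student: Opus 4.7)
The plan is to realize $\bfKer(\varphi_1^*,\varphi_2^*)$ as the pullback of a diagonal under a natural morphism of $\bfHom$-functors, and then invoke Lemma \ref{lem:closed}.

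First, I would combine $\varphi_1^*$ and $\varphi_2^*$ into a single morphism of functors
\[
\Phi \colon \bfHom(X,Y) \longrightarrow \bfHom(X',Y) \times \bfHom(X',Y), \qquad f \longmapsto (\varphi_1^* f,\varphi_2^* f).
\]
Via the canonical isomorphism $\bfHom(X',Y) \times \bfHom(X',Y) \cong \bfHom(X', Y \times Y)$ recalled above (compatibility of $\bfHom(X',-)$ with products), the diagonal embedding of $\bfHom(X',Y)$ into its self-product corresponds to the morphism $(\Delta_Y)_*$ induced by the diagonal $\Delta_Y \colon Y \to Y \times Y$. Since every scheme in this paper is assumed separated, $\Delta_Y$ is a closed immersion, so Lemma \ref{lem:closed} yields that $(\Delta_Y)_*$ is a closed immersion of functors.

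It then remains to identify $\bfKer(\varphi_1^*, \varphi_2^*)$ with the fibre product
\[
\bfHom(X,Y) \times_{\bfHom(X', Y \times Y)} \bfHom(X', Y)
\]
formed by $\Phi$ and $(\Delta_Y)_*$. This is a direct check on $S$-points: an element $f \colon X \times S \to Y$ of $\bfHom(X,Y)(S)$ lies in the fibre product precisely when the map $X' \times S \to Y \times Y$ assembled from $f \circ (\varphi_1 \times \id_S)$ and $f \circ (\varphi_2 \times \id_S)$ factors through $\Delta_Y$, which is exactly the equalizer condition defining $\bfKer(\varphi_1^*, \varphi_2^*)$. Since closed subfunctors are stable under base change, this presents $\bfKer(\varphi_1^*, \varphi_2^*)$ as a closed subfunctor of $\bfHom(X,Y)$, as required. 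No serious obstacle is expected; the only mildly delicate point is the identification of the fibre product with the equalizer, which comes down directly to the universal property of $\Delta_Y$.
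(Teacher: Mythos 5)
Your proof is correct and takes essentially the same route as the paper's: both exhibit $\bfKer(\varphi_1^*,\varphi_2^*)$ as the base change of the closed immersion $(\Delta_Y)_*$ (closed by separatedness of $Y$ and Lemma \ref{lem:closed}) along the map $(\varphi_1^*,\varphi_2^*)$ into $\bfHom(X',Y\times Y)$. If anything your version is stated slightly more carefully, since the paper's displayed cartesian square writes $\bfHom(X,\,\cdot\,)$ in two places where $\bfHom(X',\,\cdot\,)$ is meant.
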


\begin{proof}
We have a cartesian diagram of functors
\[ \xymatrix{
\bfKer(\varphi_1^*,\varphi_2^*) \ar[r] \ar[d] & \bfHom(X,Y) 
\ar[d]_{(\varphi_1^*, \varphi_2^*)}\\
\bfHom(X,Y) \ar[r]^-{\Delta_*} & \bfHom(X,Y \times Y), \\
} \]
where $\Delta : Y \to Y \times Y$ denotes the diagonal, and
$\bfHom(X,Y \times Y)$ is identified with
$\bfHom(X,Y) \times \bfHom(X,Y)$. Moreover, $\Delta_*$
is a closed immersion by Lemma \ref{lem:closed}; this yields
the assertion.
\end{proof}

\begin{lemma}\label{lem:descent}
Let $\varphi : X' \to X$ be a faithfully flat morphism 
of schemes, and $\pr_1,\pr_2 : X' \times_X X' \to X'$ 
the projections.

\begin{enumerate}

\item[{\rm (i)}]
$\varphi^*$ identifies $\bfHom(X,Y)$ with the equalizer 
$\bfKer(\pr_1^*,\pr_2^*)$.

\item[{\rm (ii)}] 
$\varphi^*$ is a closed immersion.

\end{enumerate}

\end{lemma}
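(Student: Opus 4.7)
The plan is to prove (i) as a faithfully flat descent statement, and then to deduce (ii) formally from (i) together with Lemma~\ref{lem:equal}.

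For (i), fix a scheme $S$ and set $\varphi_S := \varphi \times \id_S : X' \times S \to X \times S$, which is faithfully flat as a base change of $\varphi$. There is a canonical identification
\[
(X' \times S) \times_{X \times S} (X' \times S) \;\cong\; (X' \times_X X') \times S,
\]
under which the two projections to $X' \times S$ correspond to $\pr_1 \times \id_S$ and $\pr_2 \times \id_S$. Hence a morphism $g : X' \times S \to Y$ lies in $\bfKer(\pr_1^*,\pr_2^*)(S)$ precisely when its two pullbacks along these projections agree, that is, when $g$ constitutes a descent datum relative to the faithfully flat morphism $\varphi_S$. Since $Y$ is separated, such a datum is effective: $g$ descends uniquely to $f : X \times S \to Y$ with $f \circ \varphi_S = g$. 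This is the classical faithfully flat descent of morphisms into a scheme, applied locally via affine open covers of $X$ and $\varphi^{-1}(U)$ for affine opens $U \subset X$ to reduce to the fpqc situation, and then glued using the separatedness of $Y$. The construction is natural in $S$, which gives (i).

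For (ii), part (i) asserts that $\varphi^*$ factors as an isomorphism $\bfHom(X,Y) \stackrel{\sim}{\longrightarrow} \bfKer(\pr_1^*,\pr_2^*)$ followed by the inclusion of the latter into $\bfHom(X',Y)$. By Lemma~\ref{lem:equal}, this inclusion is a closed immersion of functors, hence so is $\varphi^*$.

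The only genuine work lies in the descent step of (i): descent of morphisms is cleanest in the fpqc (faithfully flat plus quasi-compact) setting, whereas our hypotheses give only faithful flatness together with the blanket \emph{locally of finite type} convention. The reduction to the quasi-compact case by suitable affine open covers on the source and target of $\varphi$ is routine, but it is the one point in the argument that has to be handled with care; everything else is a formal manipulation of representable functors.
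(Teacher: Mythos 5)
Your proof is correct and follows essentially the same route as the paper: part (i) is the statement that representable functors are sheaves for the faithfully flat topology (the paper simply cites Vistoli, Thm.~2.55, noting that $\varphi$ is automatically locally of finite presentation by the standing convention that schemes are locally of finite type over $k$), and part (ii) is deduced formally from (i) and Lemma~\ref{lem:equal} exactly as you do. One small correction to your commentary: what makes the reduction to the quasi-compact case work is not the separatedness of $Y$ but the fact that $\varphi$, being flat and locally of finite presentation, is an \emph{open} map, so each affine open of $X$ is covered by the images of finitely many affine opens of $X'$; effectivity of the resulting descent datum for morphisms into a scheme $Y$ holds with no separatedness hypothesis on $Y$.
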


\begin{proof}
(i) This holds by descent theory, see e.g. \cite[Thm.~2.55]{Vistoli} 
(note that $\varphi$ is locally of finite presentation in view of 
our standing assumption on schemes). 

(ii) This follows from (i) together with Lemma \ref{lem:equal}.
\end{proof}

In particular, we obtain:

\begin{corollary}\label{cor:constant}
The structural morphism of $X$ yields a closed immersion
\[ \pi_X^* : Y = \bfHom(\Spec(k),Y) \longrightarrow \bfHom(X,Y). \]
\end{corollary}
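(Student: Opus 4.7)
The plan is to apply Lemma \ref{lem:descent}(ii) directly to the structural morphism $\pi_X : X \to \Spec(k)$, taking $\varphi = \pi_X$ and $X = \Spec(k)$ in the notation of that lemma. The only thing to check is that $\pi_X$ is faithfully flat. Since the base is the spectrum of a field, every $k$-module is free, so $\pi_X$ is automatically flat; and surjectivity onto the one-point space $\Spec(k)$ is immediate provided $X$ is non-empty. (This non-emptiness is implicit in the statement: if $X = \emptyset$ then $\bfHom(X,Y)$ is the terminal functor $\Spec(k)$, and the structural morphism $Y \to \Spec(k)$ is not a closed immersion in general.)

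Granting faithful flatness of $\pi_X$, Lemma \ref{lem:descent}(ii) yields that
\[ \pi_X^* : \bfHom(\Spec(k), Y) \longrightarrow \bfHom(X,Y) \]
is a closed immersion of functors. Composing with the canonical identification $\bfHom(\Spec(k), Y) = Y$ recorded at the beginning of Section \ref{sec:fun} (sending a $k$-point $y \in Y(S) = \Hom(S,Y)$ to the constant morphism $X \times S \to S \to Y$), we obtain exactly the claimed closed immersion $Y \hookrightarrow \bfHom(X,Y)$.

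There is essentially no obstacle: the corollary is a one-step consequence of Lemma \ref{lem:descent}(ii) together with the identification $\bfHom(\Spec(k), Y) = Y$. The only substantive remark is the tacit non-emptiness assumption on $X$, which is what makes the structural morphism faithfully flat and not merely flat.
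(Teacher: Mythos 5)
Your proof is correct and is exactly the argument the paper intends: the corollary is stated as an immediate consequence of Lemma \ref{lem:descent}(ii) applied to the faithfully flat structural morphism $\pi_X$, together with the identification $\bfHom(\Spec(k),Y) = Y$ from the start of Section \ref{sec:fun}. Your side remark on the tacit non-emptiness of $X$ (needed for faithful flatness of $\pi_X$) is a fair observation that the paper leaves implicit.
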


We may thus see $Y$ as the closed subfunctor of $\bfHom(X,Y)$
consisting of constant morphisms.

As a further direct consequence of Lemma \ref{lem:descent},
we record:

\begin{corollary}\label{cor:descent}
Let $G$ be a group scheme, and $\varphi: X' \to X$ 
a $G$-torsor for the fpqc topology. Then $\varphi^*$
identifies $\bfHom(X,Y)$ with the closed subfunctor of
$\bfHom(X',Y)$ consisting of $G$-invariant morphisms.
\end{corollary}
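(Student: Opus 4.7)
The plan is a direct reduction to Lemma \ref{lem:descent}, using the standard trivialization of the double fiber product of a torsor. Since every $G$-torsor for the fpqc topology is in particular faithfully flat, part (i) of Lemma \ref{lem:descent} identifies $\bfHom(X,Y)$, via $\varphi^*$, with the equalizer $\bfKer(\pr_1^*,\pr_2^*)$ of the two projections $\pr_1,\pr_2 : X' \times_X X' \to X'$; part (ii) ensures that this identification realizes $\bfHom(X,Y)$ as a closed subfunctor of $\bfHom(X',Y)$. So the only thing left to check is that this equalizer coincides with the subfunctor of $G$-invariant morphisms.

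For this, I would invoke the standard torsor isomorphism
\[ \alpha : G \times X' \stackrel{\sim}{\longrightarrow} X' \times_X X',
\quad (g,x') \longmapsto (g \cdot x', x'), \]
where $\cdot$ denotes the action of $G$ on $X'$. Under $\alpha$, the projection $\pr_2$ corresponds to the projection $\pr_{X'} : G \times X' \to X'$, while $\pr_1$ corresponds to the action morphism $a : G \times X' \to X'$. Consequently, for any scheme $S$, a morphism $f : X' \times S \to Y$ lies in $\bfKer(\pr_1^*,\pr_2^*)(S)$ if and only if the compositions $f \circ (a \times \id_S)$ and $f \circ (\pr_{X'} \times \id_S)$ from $G \times X' \times S$ to $Y$ agree, which is precisely the $G$-invariance condition on $f$ (with $G$ acting trivially on $S$ and on $Y$).

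The main point of the argument is conceptual rather than technical: once the torsor identification $\alpha$ is in hand, everything is fed into Lemma \ref{lem:descent}. I do not anticipate any serious obstacle; the only minor subtlety is confirming that the $G$-invariance condition on an $S$-family of morphisms is formulated exactly in the way encoded by the equalizer condition on $\pr_1^*, \pr_2^*$, which is precisely what the isomorphism $\alpha$ makes transparent.
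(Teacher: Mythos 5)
Your proof is correct and follows exactly the route the paper intends: the paper records this corollary as ``a further direct consequence of Lemma \ref{lem:descent}'' without spelling out the details, and your argument supplies precisely the expected translation, via the torsor trivialization $G \times X' \simeq X' \times_X X'$, of the equalizer condition into $G$-invariance.
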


We now assume that $X$ (resp.~$Y$) is equipped with a
$k$-rational point $x$ (resp.~$y$). This yields a subfunctor
$\bfHom(X,Y; x \mapsto y)$ of $\bfHom(X,Y)$, such that 
for any scheme $S$, the set 
$\bfHom(X,Y; x \mapsto y)(S)$ consists of the morphisms
$f : X \times S \to Y$ which satisfy $f(x,s) = y$ identically
on $S$. In view of the cartesian diagram of functors
\[ \xymatrix{
\bfHom(X,Y; x \mapsto y) \ar[r] \ar[d] & \bfHom(X,Y) 
\ar[d]_{x^*}\\
\Spec(k) \ar[r]^-{y} & Y, \\
} \]
we see that $\bfHom(X,Y; x \mapsto y)$ is a closed subfunctor
of $\bfHom(X,Y)$.

In particular, for any group scheme $H$, we obtain a closed subfunctor 
$\bfHom(X,H; x \mapsto e_H)$ of $\bfHom(X,H)$.
Also, note that $\bfHom(X,H)$ is a group functor relative
to pointwise multiplication, and $\bfHom(X,H; x \mapsto e_H)$
is a normal subgroup functor. We also have the closed subfunctor 
$H$ of constant morphisms (Corollary \ref{cor:constant}); 
this is a subgroup functor as well.

\begin{lemma}\label{lem:sdir}
For any scheme $X$ equipped with a $k$-rational point $x$ and 
for any group scheme $H$, we have an isomorphism of group functors
\[ \bfHom(X,H) \simeq \bfHom(X,H; x \mapsto e_H) \rtimes H. \]
\end{lemma}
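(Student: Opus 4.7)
The idea is to construct an explicit functorial splitting of any $f : X \times S \to H$ into a constant morphism (obtained from evaluation at $x$) and a remainder sending $(x,s)$ to $e_H$. Given $f \in \bfHom(X,H)(S)$, set $h := f \circ (x \times \id_S) \in H(S)$ and let $\tilde h := h \circ \pr_S : X \times S \to H$ be the associated constant morphism. Define $f_0 := f \cdot \tilde h^{-1}$, where the product and inverse are taken pointwise in $H$. A direct computation gives $f_0(x,s) = h(s) h(s)^{-1} = e_H$ identically, so $f_0 \in \bfHom(X,H; x \mapsto e_H)(S)$. The assignment $f \mapsto (f_0, h)$ is manifestly natural in $S$, and its inverse sends $(f_0, h)$ back to $f_0 \cdot \tilde h$; this yields an isomorphism of functors $\bfHom(X,H) \simeq \bfHom(X,H; x \mapsto e_H) \times H$.

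To upgrade this to the asserted isomorphism of group functors with semi-direct product structure, I would verify three facts. First, the subgroup functors $\bfHom(X,H; x \mapsto e_H)$ and $H$ intersect trivially: a constant morphism with value $h$ which sends $(x,\cdot)$ to $e_H$ must have $h = e_H$. Second, the decomposition $f = f_0 \cdot \tilde h$ above shows that these two subfunctors jointly generate $\bfHom(X,H)$. Third, $\bfHom(X,H; x \mapsto e_H)$ is already known to be normal in $\bfHom(X,H)$, and the induced action of the quotient on this normal subgroup is realized inside $\bfHom(X,H)$ by conjugation by constant morphisms, which preserves the pointed condition since $h e_H h^{-1} = e_H$.

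No serious obstacle is anticipated; the only point requiring care is that pullback along a morphism $S' \to S$ commutes with the pointwise operations on $\Hom(X \times -,H)$, so that $f \mapsto (f_0, h)$ really defines a morphism of \emph{group} functors. This is automatic because both operations are inherited via composition from the group law of $H$.
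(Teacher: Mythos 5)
Your decomposition $f = f_0 \cdot \tilde h$ with $h(s) = f(x,s)$ and $f_0 = f\,\tilde h^{-1}$ is exactly the paper's argument, which likewise takes $h(s)=f(x,s)$, $g=fh^{-1}$, and notes the uniqueness of the decomposition; your extra verifications of trivial intersection and normality are routine points the paper leaves implicit. The proposal is correct and takes essentially the same approach.
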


\begin{proof}
Let $S$ be a scheme, and $f \in \Hom(X \times S,H)$. 
Then we have $f = g h$, where 
$g \in \Hom(X \times S,H)$ sends $x \times S$ to $e_H$,
and $h \in \Hom(S,H)$: just take $h(s) = f(x, s)$ 
and $g = f h^{-1}$. Moreover, such a decomposition of $f$ 
is clearly unique. This yields the statement.
\end{proof}

Next, consider an exact sequence of group schemes
\[ 1 \longrightarrow N \stackrel{i}{\longrightarrow} H
\stackrel{q}{\longrightarrow} Q, \]
i.e., $i$, $q$ are homomorphisms, $i$ is a closed immersion, 
and its schematic image is the kernel of $q$. Then we readily 
obtain:

\begin{lemma}\label{lem:lex}
With the above notation, the sequence of group functors
\[ 1 \longrightarrow \bfHom(X,N) 
\stackrel{i_*}{\longrightarrow} \bfHom(X,H)
\stackrel{q_*}{\longrightarrow} \bfHom(X,Q) \] 
is exact. If $X$ is equipped with a $k$-rational point $x$, 
then the sequence of group functors
\[ 1 \to \bfHom(X,N; x \mapsto e_N) 
\stackrel{i_*}{\to} \bfHom(X,H; x \mapsto e_H)
\stackrel{q_*}{\to} \bfHom(X,Q; x \mapsto e_Q) \] 
is exact as well.
\end{lemma}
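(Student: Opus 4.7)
The plan is to verify exactness pointwise at each test scheme $S$, reducing both sequences to the universal property of the schematic kernel $N = H \times_Q \Spec(k)$ (pulled back along $e_Q : \Spec(k) \to Q$). In either sequence, two things require checking: injectivity of $i_*$ and the equality $\Ker(q_*) = \Im(i_*)$ on $S$-points. No right-surjectivity is claimed, so that side needs no attention.

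Injectivity of $i_* : \Hom(X \times S, N) \to \Hom(X \times S, H)$ is immediate, since a closed immersion is in particular a monomorphism of schemes. The inclusion $\Im(i_*) \subseteq \Ker(q_*)$ follows from the hypothesis that $q \circ i$ is the trivial homomorphism, i.e.\ factors through $\Spec(k)$ via $e_Q$.

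For the reverse inclusion, let $f : X \times S \to H$ be such that $q \circ f$ equals the composite $X \times S \to \Spec(k) \xrightarrow{e_Q} Q$. By hypothesis on the original exact sequence, $i$ identifies $N$ with the fiber product $H \times_Q \Spec(k)$. Its universal property then produces a unique $g : X \times S \to N$ with $i \circ g = f$, providing the required preimage under $i_*$. This settles exactness of the first sequence.

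For the pointed version, assume $X$ carries a $k$-rational point $x$, and let $f \in \bfHom(X, H; x \mapsto e_H)(S)$ lie in $\Ker(q_*)$. Taking the $g$ constructed above and pulling $i \circ g = f$ back along $x \times \id_S : S \to X \times S$ gives $i \circ (g \circ (x \times \id_S)) = e_H \circ \pi_S = i \circ e_N \circ \pi_S$; injectivity of $i$ forces $g \circ (x \times \id_S) = e_N \circ \pi_S$, i.e.\ $g \in \bfHom(X, N; x \mapsto e_N)(S)$. The whole lemma is a formal consequence of the exactness hypothesis on $N \to H \to Q$ and the fact that $\bfHom(X, -)$ preserves finite limits in its second argument, so no serious obstacle arises.
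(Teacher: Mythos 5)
Your argument is correct and is exactly the verification the paper has in mind: the lemma is stated there with no proof (``we readily obtain''), the point being precisely that left-exactness of $\bfHom(X,-)$ on $S$-points follows formally from $i$ being a monomorphism and $N$ being the fiber product $H\times_Q\Spec(k)$. Nothing to add.
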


Given two group schemes $G$, $H$, we denote by 
$\bfHom_{\gp}(G,H)$ the subfunctor of 
$\bfHom(G,H ; e_G \mapsto e_H)$ consisting of homomorphisms. 
Clearly, the $H$-action on $\bfHom(G,H ; e_G \mapsto e_H)$ 
by conjugation on the target normalizes $\bfHom_{\gp}(G,H)$. 
If $H$ is commutative, then $\bfHom_{\gp}(G,H)$ is a subgroup
functor of the commutative group functor $\bfHom(G,H)$.

\begin{lemma}\label{lem:gp}
For any group schemes $G$, $H$, the subfunctor
$\bfHom_{\gp}(G,H)$ is closed in $\bfHom(G,H)$.
\end{lemma}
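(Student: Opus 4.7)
The plan is to realize $\bfHom_{\gp}(G,H)$ as an equalizer of two morphisms of functors and then imitate the proof of Lemma \ref{lem:equal}. Let $\mu_G : G \times G \to G$ and $\mu_H : H \times H \to H$ denote the multiplication morphisms, and let $\pr_1, \pr_2 : G \times G \to G$ be the two projections. I define morphisms of functors
\[ a, b : \bfHom(G,H) \longrightarrow \bfHom(G \times G, H) \]
by taking $a = \mu_G^*$ (precomposition with $\mu_G$), and $b$ to be the composition
\[ \bfHom(G,H) \stackrel{(\pr_1^*, \pr_2^*)}{\longrightarrow} \bfHom(G \times G, H \times H)
\stackrel{\mu_{H,*}}{\longrightarrow} \bfHom(G \times G, H), \]
where the first arrow uses the canonical identification $\bfHom(G \times G, H) \times \bfHom(G \times G, H) \simeq \bfHom(G \times G, H \times H)$ recalled in Section \ref{sec:fun}. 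On $S$-points, $a$ sends $f : G \times S \to H$ to $(g_1,g_2,s) \mapsto f(g_1 g_2, s)$, while $b$ sends it to $(g_1,g_2,s) \mapsto f(g_1,s) \cdot f(g_2,s)$.

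A morphism $f \in \bfHom(G,H)(S)$ corresponds to a homomorphism of $S$-group schemes $G_S \to H_S$ exactly when $a(f) = b(f)$. Note that this condition automatically forces $f(e_G,s) = e_H$ (specialize to $g_1 = g_2 = e_G$ and cancel in the group $H(S)$), so the equalizer of $a$ and $b$ is contained in $\bfHom(G,H; e_G \mapsto e_H)$ and coincides with $\bfHom_{\gp}(G,H)$.

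Mimicking the proof of Lemma \ref{lem:equal}, I form the Cartesian diagram
\[ \xymatrix{
\bfHom_{\gp}(G,H) \ar[r] \ar[d] & \bfHom(G,H) \ar[d]^{(a,b)} \\
\bfHom(G \times G, H) \ar[r]^-{\Delta_*} & \bfHom(G \times G, H \times H),
} \]
where $\Delta : H \to H \times H$ is the diagonal. Since $H$ is a locally algebraic group and schemes are assumed separated throughout the paper, $\Delta$ is a closed immersion; by Lemma \ref{lem:closed}, so is $\Delta_*$, and hence so is $\bfHom_{\gp}(G,H) \hookrightarrow \bfHom(G,H)$. I do not anticipate any real obstacle: the argument is a purely formal manipulation, and the only point that needs to be checked with care is the reformulation of the $S$-group scheme homomorphism condition as the equality $a(f) = b(f)$, which is immediate.
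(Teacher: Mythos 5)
Your proof is correct and follows essentially the same route as the paper: both realize $\bfHom_{\gp}(G,H)$ as the equalizer of $f \mapsto f\circ\mu_G$ and $f \mapsto \mu_H\circ(f\times f)$ inside $\bfHom(G\times G,H)$, form the cartesian square against the closed immersion $\Delta_{H,*}$, and conclude by Lemma \ref{lem:closed}. Your explicit check that the equalizer condition already forces $f(e_G,s)=e_H$ is a welcome clarification but does not change the argument.
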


\begin{proof}
We adapt the argument of the proof of Lemma \ref{lem:equal}. 
Let $S$ be a scheme, and $f \in \Hom(G \times S, H)$.
Then $f$ is a homomorphism if and only if the diagram
\[ \xymatrix{
G \times G \times S \ar[r]^-{(\mu, \id) } \ar[d]_{(f, f,\id)} & 
G \times S \ar[d]_{(f, \id)} \\
H \times H \times S \ar[r]^-{(\nu, \id)} & H \times S \\
} \]
commutes, where $\mu$ (resp. $\nu$) denotes the multiplication
in $G$ (resp. $H$). This yields a cartesian diagram of functors
\[ \xymatrix{
\bfHom_{\gp}(G,H) \ar[r] \ar[d]_{\mu^*} & \bfHom(G,H) 
\ar[d]_{(\mu^*, \nu_* \circ \Delta)}\\
\bfHom(G \times G,H) \ar[r]^-{} & \bfHom(G \times G,H \times H), \\
} \]
where 
\[ \Delta = (\Delta_H)_* : \bfHom(G,H) \to \bfHom(G, H \times H) 
= \bfHom(G,H) \times \bfHom(G,H) \] 
denotes the diagonal. Since $\Delta_H$ is a closed immersion, 
this yields the statement in view of Lemma \ref{lem:closed}.
\end{proof}

\begin{lemma}\label{lem:hom}
Let $G_1$, $G_2$, $H$ be group schemes, and
$\alpha : G_1 \times G_2 \to G_2$ an action of $G_1$ on 
$G_2$ by automorphisms. Consider the semi-direct product
$G = G_1 \ltimes G_2$. Then the product of restriction functors
\[ \bfHom_{\gp}(G,H) \longrightarrow 
\bfHom_{\gp}(G_1,H) \times \bfHom_{\gp}(G_2,H) \]
is a closed immersion.
\end{lemma}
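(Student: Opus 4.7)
The plan is to follow the pattern of Lemma \ref{lem:gp}: I will first show that the restriction map is a monomorphism of functors, then identify its image as the equalizer of two natural transformations into $\bfHom(G_1 \times G_2, H)$, and conclude via Lemma \ref{lem:closed} applied to the diagonal of $H$.

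Step one is injectivity. Since the multiplication $G_1 \times G_2 \to G$ is an isomorphism of schemes (by definition of the semi-direct product), every $S$-point of $G$ can be written uniquely as $g_1 \cdot g_2$ with $g_i \in G_i(S)$. Writing $\iota_i: G_i \hookrightarrow G$ for the inclusion, any homomorphism $f: G_S \to H_S$ then satisfies $f(g_1 g_2) = (f \circ \iota_1)(g_1) \cdot (f \circ \iota_2)(g_2)$, so $f$ is determined by its restrictions to $G_1$ and $G_2$.

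Step two is to identify the image. A direct calculation using the multiplication law of the semi-direct product shows that a pair $(f_1, f_2) \in \bfHom_{\gp}(G_1, H)(S) \times \bfHom_{\gp}(G_2, H)(S)$ extends (necessarily uniquely) to a homomorphism $G_S \to H_S$ if and only if the equivariance condition
\[ f_2 \bigl( \alpha(g_1, g_2) \bigr) = f_1(g_1) \, f_2(g_2) \, f_1(g_1)^{-1} \]
holds identically on $(G_1 \times G_2)_S$. Setting $P := \bfHom_{\gp}(G_1, H) \times \bfHom_{\gp}(G_2, H)$, this defines two natural transformations $\Phi_1, \Phi_2 : P \to \bfHom(G_1 \times G_2, H)$, namely $\Phi_1(f_1, f_2) = f_2 \circ \alpha$ and $\Phi_2(f_1, f_2) : (g_1, g_2) \mapsto f_1(g_1) f_2(g_2) f_1(g_1)^{-1}$, and the restriction map identifies $\bfHom_{\gp}(G, H)$ with their equalizer inside $P$.

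Step three closes the argument. The equalizer sits in a cartesian square
\[ \xymatrix{
\bfHom_{\gp}(G,H) \ar[r] \ar[d] & P \ar[d]^{(\Phi_1, \Phi_2)} \\
\bfHom(G_1 \times G_2, H) \ar[r]^-{(\Delta_H)_*} & \bfHom(G_1 \times G_2, H \times H),
} \]
where the right-hand $\bfHom$ is identified with a product of $\bfHom$'s as recalled in Section \ref{sec:fun}. Since $H$ is separated under our standing conventions on schemes, $\Delta_H : H \to H \times H$ is a closed immersion; Lemma \ref{lem:closed} then shows that $(\Delta_H)_*$ is a closed immersion, and the cartesian square transports this to the left column. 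I foresee no genuine obstacle: the only delicate point is keeping the equivariance computation straight, and everything else is a formal variation on Lemma \ref{lem:gp}.
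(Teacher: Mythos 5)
Your proof is correct and is essentially the paper's own argument: restrict to $G_1$ and $G_2$, characterize the image by the equivariance identity $f_2(\alpha(g_1,g_2)) = f_1(g_1) f_2(g_2) f_1(g_1)^{-1}$, and realize it as a closed subfunctor via the cartesian square over $(\Delta_H)_*$ as in Lemma \ref{lem:equal}. The paper merely leaves the last step as ``arguing as in the proof of Lemma \ref{lem:equal},'' which you have spelled out explicitly.
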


\begin{proof}
Let $S$ be a scheme, and $f: G \times S \to H$ a homomorphism.
Denote by $f_1$ (resp.~$f_2$) the restriction of $f$ to $G_1 \times S$ 
(resp.~$G_2 \times S$).
Then we have identically on $G_1 \times G_2 \times S$
\[ f(g_1 g_2,s) = f_1(g_1,s) f_2(g_2,s), \quad 
f_2(\alpha(g_1,g_2),s) = f_1(g_1,s) f_2(g_2,s) f_1(g_1,s)^{-1} \]
by the definition of the semi-direct product. Conversely, every pair
of homomorphisms $(f_1,f_2)$ satisfying the latter equality 
defines a unique homomorphism $f : G \times S \to H$, where the scheme
$G$ is identified with $G_1 \times G_2$. This yields
the assertion by arguing as in the proof of Lemma \ref{lem:equal}. 
\end{proof}

\section{Tangent spaces}
\label{sec:tangent}

We first recall the notion of tangent space for a functor
$\bfF$ from $k$-algebras to sets (see e.g.  \cite[VI.1.3]{EH}).
Denote by $D =  k[t]/(t^2)$ the algebra of dual numbers, so that
$D = k \oplus k \varepsilon$ where $\varepsilon^2 = 0$. 
The algebra homomorphism 
\[ \sigma : D \longrightarrow k, \quad \varepsilon \longmapsto 0 \]
yields a map $\bfF(\sigma) : \bfF(D) \to \bfF(k)$. The fiber of this map 
at $f \in \bfF(k)$ is the tangent space $T_f(\bfF)$; it is equipped
with an action of the multiplicative group $k^{\times}$ 
(the automorphism group of the $k$-algebra $D$).

More generally, each vector space $V$ yields 
a $k$-algebra $\bfD(V) = k \oplus \varepsilon V$, equipped with
the projection $\sigma_V : \bfD(V) \to k$ with kernel the ideal 
$\varepsilon V$ of square $0$. This defines a functor $\bfD$ from
vector spaces to algebras, satisfying $\bfD(k) = D$. 
For any two vector spaces $V$, $W$, we obtain a fiber product 
of algebras
\begin{equation}\label{eqn:fp}
\xymatrix{
\bfD(V \oplus W) \ar[r]^-{\bfD(\pr_V)} \ar[d]_-{\bfD(\pr_W)} 
& \bfD(V) \ar[d]_{\sigma_V} \\
\bfD(W) \ar[r]^-{\sigma_W} & k.\\
} \end{equation}
If $\bfF$ commutes with such fiber products, 
then $T_f(\bfF)$ has a natural structure of $k$-vector space.

Given two functors $\bfF_1$, $\bfF_2$ as above and a morphism
of functors $u : \bfF_1 \to \bfF_2$, the induced map 
$u(D): \bfF_1(D) \to \bfF_2(D)$ yields the differential
\[ T_f(u) : T_f(\bfF_1) \longrightarrow T_{u(f)}(\bfF_2) \] 
for any $f \in \bfF_1(k)$.
If $\bfF_1$, $\bfF_2$ commute with the fiber products
(\ref{eqn:fp}), then $T_f(u)$ is $k$-linear.

These notions may be applied to any contravariant functor 
from schemes to sets, and hence to $\bfHom(X,Y)$ where $X$, 
$Y$ are schemes. The resulting functor from algebras to sets 
commutes with the fiber products (\ref{eqn:fp}) in view of
\cite[Exp.~II, Cor.~3.11.2]{SGA3}. Moreover, for any 
$f \in \Hom(X,Y)$, we have a canonical isomorphism
of vector spaces
\begin{equation}\label{eqn:tan}
T_f \bfHom(X,Y) \simeq \Hom_Y(X,\bV(\Omega^1_Y)), 
\end{equation}
where $\Omega^1_Y$ denotes the $\cO_Y$-module of K\"ahler 
differentials of $Y$ over $k$, and 
$\bV(\Omega^1_Y) = \Spec_Y \Sym_{\cO_Y}(\Omega^1_Y)$ 
(see \cite[Exp.~II, Prop.~3.3, Cor.~3.11.3]{SGA3}).
Equivalently, we have canonical isomorphisms of vector spaces
\[ T_f \bfHom(X,Y) \simeq 
\Hom_{\cO_Y}(\Omega^1_Y,f_*(\cO_X)) \simeq 
\Hom_{\cO_X}(f^*(\Omega^1_Y),\cO_X). \]

Next, consider a group scheme $H$. By 
\cite[Exp.~I, Prop.~6.8.6]{SGA3}, the $\cO_H$-module
$\Omega^1_H$ is $H \times H$-equivariant, where $H \times H$
acts on $H$ by left and right multiplication. In particular,
$\Omega^1_H$ is equivariant relative to the right $H$-action.
Thus, there is a canonical isomorphism 
$\Omega^1_H \simeq \cO_H \otimes \Omega^1_H(e_H)$
in view of \cite[Exp.~I, Prop.~6.8.1]{SGA3}. Moreover,
we have canonical isomorphisms
$\Omega^1_H(e_H) \simeq \fm/\fm^2 \simeq \Lie(H)^*$,
where $\fm$ denotes the maximal ideal of the local ring
$\cO_{H,e_H}$, and $\Lie(H)$ stands for the Lie algebra. 
This yields a canonical isomorphism 
$\Omega^1_H \simeq \cO_H \otimes \Lie(H)^*$.
In view of the isomorphism (\ref{eqn:tan}), this yields in turn:

\begin{lemma}\label{lem:txh}
Let $X$ be a scheme, $H$ a group scheme, and $f : X \to H$ 
a morphism. Then there is a canonical isomorphism of vector spaces
\begin{equation}\label{eqn:i}
i : T_f \bfHom(X,H) \stackrel{\sim}{\longrightarrow}
\cO(X) \otimes \Lie(H). 
\end{equation}
\end{lemma}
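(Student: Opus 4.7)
The plan is to chain together the two pieces already recalled in the paragraphs preceding the statement: the tangent space formula (\ref{eqn:tan}) and the right-translation trivialisation $\Omega^1_H \simeq \cO_H \otimes_k \Lie(H)^*$.

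First I would use (\ref{eqn:tan}) in the dual form
\[ T_f \bfHom(X,H) \simeq \Hom_{\cO_X}\bigl(f^*\Omega^1_H,\, \cO_X\bigr), \]
which the excerpt explicitly lists as equivalent. Pulling back the canonical isomorphism $\Omega^1_H \simeq \cO_H \otimes_k \Lie(H)^*$ along $f : X \to H$ yields an isomorphism of $\cO_X$-modules $f^*\Omega^1_H \simeq \cO_X \otimes_k \Lie(H)^*$, which, substituted above, gives
\[ T_f \bfHom(X,H) \simeq \Hom_{\cO_X}\bigl(\cO_X \otimes_k \Lie(H)^*,\, \cO_X\bigr). \]

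Next I would invoke the tensor-hom adjunction for the structural map $k \to \Gamma(X,\cO_X) = \cO(X)$: an $\cO_X$-linear map out of the free $\cO_X$-module $\cO_X \otimes_k \Lie(H)^*$ is determined by its restriction to the generators $1 \otimes \Lie(H)^*$, so the right-hand side is canonically identified with $\Hom_k\bigl(\Lie(H)^*,\, \cO(X)\bigr)$. Finally, since $H$ is locally of finite type over $k$, the $k$-vector space $\Lie(H)$ is finite-dimensional, hence the evaluation map
\[ \cO(X) \otimes_k \Lie(H) \longrightarrow \Hom_k\bigl(\Lie(H)^*,\, \cO(X)\bigr) \]
is an isomorphism. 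Composing these canonical identifications defines $i$.

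There is no real obstacle here: every step is a formal consequence of the recollections made immediately above the statement. The only points worth a moment's attention are that the right-translation trivialisation of $\Omega^1_H$ is itself canonical (which is exactly the content of \cite[Exp.~I, Prop.~6.8.1, Prop.~6.8.6]{SGA3} cited just above), and that the finite-dimensionality of $\Lie(H)$ — needed for the last dualisation — is automatic under our blanket assumption that group schemes are locally algebraic.
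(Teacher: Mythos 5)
Your proof is correct and follows exactly the route the paper intends: the lemma is stated as an immediate consequence of combining the dual form of (\ref{eqn:tan}) with the right-translation trivialisation $\Omega^1_H \simeq \cO_H \otimes \Lie(H)^*$, which is precisely what you do. The two extra points you flag --- canonicity of the trivialisation and finite-dimensionality of $\Lie(H)$ for the final dualisation --- are the right ones to check, and both hold under the paper's standing conventions.
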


\begin{remark}\label{rem:txh}
(i) With the above notation, we may view $\Lie(H)$ as 
the affine space $\Spec\,  \Sym(\fm/\fm^2)$; this identifies 
$\cO(X) \otimes \Lie(H)$ with $\Hom(X,\Lie(H))$.

If $X$ is equipped with a $k$-rational point $x$ and 
$f(x) = e_H$, then $i$ restricts to an isomorphism 
\begin{equation}\label{eqn:j} 
j : T_f \bfHom(X,H; x \mapsto e_H) 
\stackrel{\sim}{\longrightarrow} 
\Hom(X,\Lie(H); x \mapsto 0). 
\end{equation}

\noindent
(ii) By \cite[Exp.~II, \S 3.11]{SGA3}, the isomorphism 
(\ref{eqn:i}) may be interpreted as follows: consider
\[ \varphi \in T_f \bfHom(X,H) 
= \Hom_Y(X,\bV(\Omega^1_H)) = H^0(X,f^*(T_H)), \] 
where $T_H$ denotes the tangent bundle.
Let $S$ be a scheme, and $x \in X(S)$; then $f(x) \in H(S)$.
Thus, we may view $\varphi(x)$ as an $S$-point of $T_H$ 
above $f(x)$. Also, $T_H$ is equipped with a group scheme 
structure, the semi-direct product $\Lie(H) \rtimes H$
(see \cite[Exp.~II, \S 4.1]{SGA3}). Thus,
$\varphi(x) f(x)^{-1}$ is an $S$-point of the affine space 
$\Lie(H)$, and the assignment $x \mapsto \varphi(x) f(x)^{-1}$ 
gives back the isomorphism $i$.
\end{remark}

Next, consider a homomorphism of group schemes $f : G \to H$,
that is, $f \in \bfHom_{\gp}(G,H)(k)$. Then the $H$-action on 
$\bfHom_{\gp}(G,H)$ by conjugation yields a morphism of functors
\[ \gamma_f : H \longrightarrow \bfHom_{\gp}(G,H), \quad
h \longmapsto (g \mapsto h f(g) h^{-1}) \]
that we may see as the orbit map associated with $f$.
Since $\gamma_f(e_H) = f$, we have the differential 
\[ T_{e_H}(\gamma_f) :  T_{e_H}(H) \longrightarrow 
T_f \bfHom_{\gp}(G,H) \subset T_f \bfHom(G,H)  \] 
that we will view as a map $\Lie(H) \to \Hom(G,\Lie(H))$
by using the isomorphism (\ref{eqn:i}).

\begin{lemma}\label{lem:bz}
Keep the above notation and assumptions.

\begin{enumerate}

\item[{\rm (i)}] The tangent space $T_f \bfHom_{\gp}(G,H)$ 
is the subspace $Z^1(G,\Lie(H))$ of $\Hom(G,\Lie(H))$
consisting of the morphisms 
$\varphi : G \to \Lie(H)$ such that
$\varphi(g_1 g_2) = \varphi(g_1) + \Ad(f(g_1)) \varphi(g_2)$
identically on $G \times G$.

\item[{\rm (ii)}] The image of the differential 
$T_{e_H}(\gamma_f)$ is the subspace $B^1(G,\Lie(H))$ 
of $\Hom(G,\Lie(H))$ consisting of the morphisms 
$g \mapsto \Ad(f(g))x - x$, where $x \in \Lie(H)$.

\end{enumerate}

\end{lemma}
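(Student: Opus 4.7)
The plan is to apply the identification of the tangent space from Lemma \ref{lem:txh} together with Remark \ref{rem:txh}(ii), which realizes each element of $T_f \bfHom(G,H)$ as a morphism $\varphi \colon G \to \Lie(H)$ determined by $\varphi(g) = F(g) \cdot f(g)^{-1}$, computed in the group scheme $T_H = \Lie(H) \rtimes H$. With this description in hand, both assertions become a direct translation of the defining conditions.

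For (i), I would take a $D$-point $F \colon G_D \to H_D$ lifting $f$, so that $F(g)$ corresponds to the pair $(\varphi(g), f(g)) \in \Lie(H) \rtimes H$. The semi-direct product law on $T_H$ reads $(v_1, h_1) \cdot (v_2, h_2) = (v_1 + \Ad(h_1) v_2,\, h_1 h_2)$, encoding the adjoint action on $D$-points. Imposing $F(g_1 g_2) = F(g_1) F(g_2)$ yields
\[
(\varphi(g_1 g_2),\, f(g_1 g_2)) = (\varphi(g_1) + \Ad(f(g_1)) \varphi(g_2),\, f(g_1) f(g_2));
\]
since $f$ is a homomorphism the second coordinates agree automatically, and comparison of the first coordinates gives exactly the cocycle identity $\varphi(g_1 g_2) = \varphi(g_1) + \Ad(f(g_1)) \varphi(g_2)$. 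This identifies $T_f \bfHom_{\gp}(G,H)$ with $Z^1(G, \Lie(H))$.

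For (ii), I would compute $T_{e_H}(\gamma_f)$ by evaluating the orbit map on the $D$-point of $H$ at $e_H$ corresponding to $x \in \Lie(H)$, namely $(x, e_H) \in T_H$ with inverse $(-x, e_H)$. A direct computation in the semi-direct product yields
\[
(x, e_H) \cdot (0, f(g)) \cdot (-x, e_H) = \bigl(x - \Ad(f(g)) x,\, f(g)\bigr),
\]
so under the identification of (i) this orbit direction corresponds to the morphism $\varphi(g) = x - \Ad(f(g)) x$. As $x$ ranges over $\Lie(H)$, so does $-x$, and hence the image equals $B^1(G, \Lie(H))$ as stated.

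The argument is essentially an unwinding of definitions; the only real bookkeeping obstacle is keeping the conventions for the semi-direct product $T_H = \Lie(H) \rtimes H$ consistent with the normalization $\varphi(g) = F(g) f(g)^{-1}$ fixed in Remark \ref{rem:txh}(ii). Once these are pinned down, both parts follow from the formal calculations sketched above.
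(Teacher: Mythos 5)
Your proof is correct and follows essentially the same route as the paper: part (ii) is the paper's own computation (via Remark \ref{rem:txh}(ii) and the group law on $T_H = \Lie(H) \rtimes H$, including the harmless sign discrepancy between $x - \Ad(f(g))x$ and the stated $\Ad(f(g))x - x$), and for part (i) the paper simply cites SGA3, Exp.~II, Prop.~4.2, whose content is exactly the cocycle computation you carry out explicitly. No gaps.
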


\begin{proof}
(i) This follows from \cite[Exp.~II, Prop.~4.2]{SGA3}.

(ii) We have $\gamma_f(h) f(g)^{-1} = h f(g) h^{-1} f(g)^{-1}$
identically on $G \times H$. In view of Remark \ref{rem:txh},
it follows that $T_{e_H}(\gamma_f)(x)(g) = x - \Ad(f(g))x$
for any $x \in \Lie(H)$ and any schematic point $g$ of $G$.

\end{proof}

\begin{remark}\label{rem:H1}
(i) The space $Z^1(G,\Lie(H))$  consists of the $1$-cocycles 
of the Hochschild complex $C^*(G,\Lie(H))$, where $\Lie(H)$ 
is a $G$-module via $\Ad \circ f$; moreover, the subspace 
$B^1(G,\Lie(H))$ consists of the $1$-coboundaries
(see \cite[Exp.~I, \S 5.1]{SGA3} or \cite[II.3]{DG}). Thus, we have 
\[ Z^1(G,\Lie(H))/B^1(G,\Lie(H)) = H^1(G,\Lie(H)), \]
the first cohomology group of this module.

With the notation and assumptions of Lemma \ref{lem:bz},
it follows that the map
\[ T_{e_H}(\gamma_f) : 
T_{e_H}(H) \longrightarrow T_f \bfHom_{\gp}(G,H) \]
is surjective if and only if $H^1(G,\Lie(H)) = 0$.

(ii) Most results on cohomology groups of a group scheme $G$
are obtained under the assumption that $G$ is affine. 
When $G$ is an algebraic group, this entails no loss of generality 
in view of the affinization theorem recalled in the introduction.
Indeed, the pullback map $\cO(G^{\aff}) \to \cO(G)$ is an isomorphism. 
Moreover, the representation $\Ad \circ f : G \to \GL(\Lie(H))$ factors 
uniquely through a representation of $G^{\aff}$. Therefore, $\Lie(H)$ is
a $G^{\aff}$-module, and the pullback maps
\[ Z^1(G^{\aff},\Lie(H)) \to Z^1(G,\Lie(H)), \;
B^1(G^{\aff},\Lie(H)) \to B^1(G,\Lie(H)) \] 
are isomorphisms. So we obtain an isomorphism
\[ H^1(G^{\aff},\Lie(H)) \stackrel{\sim}{\longrightarrow}
H^1(G,\Lie(H)) \]
(which extends to all cohomology groups of all $G$-modules).
\end{remark}

Next, we obtain a key smoothness result:

\begin{lemma}\label{lem:smooth}
Let $G$ be an algebraic group, and $H$ a smooth group scheme. 
Assume that $\bfHom_{\gp}(G,H)$ is represented by a scheme $M$. 
Then the following conditions are equivalent:

\begin{enumerate}

\item[{\rm (i)}] The morphism (\ref{eqn:graph}) 
\[ \gamma : H \times M \longrightarrow M \times M, 
\quad (h,f) \longmapsto (h f h^{-1}, f) \] 
is smooth.

\item[{\rm (ii)}] For any field extension $K/k$ and any
$f \in M(K) = \Hom_{K-\gp}(G_K,H_K)$, the morphism 
\[ \gamma_f : H_K \longrightarrow M_K,  \quad
h \longmapsto (g \mapsto h f(g) h^{-1}) \]
is smooth.

\item[{\rm (iii)}] For any field extension $K/k$ and any 
$f \in M(K)$, we have 
\[ H^1(G_K, \Lie(H_K)) = 0,  \]
where $\Lie(H_K)$ is a $G_K$-module via $\Ad \circ f$.

\end{enumerate}

These conditions hold whenever $G^{\aff}$ is linearly reductive.

\end{lemma}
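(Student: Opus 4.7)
The plan is to establish the chain (i)$\Rightarrow$(ii), (ii)$\Leftrightarrow$(iii), (ii)$\Rightarrow$(i), and then to deduce the conditions from linear reductivity of $G^{\aff}$. The key structural input throughout is that each orbit map $\gamma_f$ is $H_K$-equivariant, with $H_K$ acting on itself by left translation and on $M_K$ by conjugation through $\gamma_f$: consequently, both smoothness of $\gamma_f$ at a $K$-point and surjectivity of its differential there can be checked at $e_H$ and then propagate to all of $H_K$.

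For (i)$\Leftrightarrow$(ii), I view $\gamma$ as an $M$-morphism via the respective second projections. Pulling $\gamma$ back along $M_K \to M_K \times M_K$, $m \mapsto (m,f)$, recovers $\gamma_f : H_K \to M_K$, so (i)$\Rightarrow$(ii) is immediate by stability of smoothness under base change. Conversely, under (ii), smoothness of $\gamma_f$ with $H_K$ smooth forces $M_K$ to be smooth at $\gamma_f(e_H) = f$ for every $K$-point $f$, hence $M$ is smooth. Then both $H \times M \to M$ and $M \times M \to M$ are smooth morphisms, and the standard fiber-by-fiber criterion (EGA IV, Prop.~17.11.1) reduces smoothness of the $M$-morphism $\gamma$ to smoothness of its fibers $\gamma_f$.

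For (ii)$\Leftrightarrow$(iii), Lemma \ref{lem:bz} together with Remark \ref{rem:H1}(i) identifies the image of $T_{e_H}(\gamma_f) : \Lie(H_K) \to T_f M_K$ with $B^1(G_K, \Lie H_K) \subseteq Z^1(G_K, \Lie H_K) = T_f M_K$, so this differential is surjective if and only if $H^1(G_K, \Lie H_K) = 0$. The direction (ii)$\Rightarrow$(iii) follows since smoothness of $\gamma_f$ at $e_H$ (with $H_K$ smooth there) forces its differential to be surjective. For the converse, $H_K$-equivariance spreads the surjectivity from $e_H$ to all $K$-points of $H_K$; an orbit-stabilizer dimension count then shows that the orbit $H_K \cdot f$ has dimension $\dim T_f M_K$, which forces $\Stab_{H_K}(f)$ and $M_K$ at $f$ to both be smooth of the expected dimension, whence $\gamma_f$ is smooth.

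Finally, if $G^{\aff}$ is linearly reductive then so is $(G^{\aff})_K = (G_K)^{\aff}$ for every field extension $K/k$, linear reductivity being preserved under field extension. Semisimplicity of the category of its modules yields vanishing of all positive-degree cohomology for every module; applied to $\Lie(H_K)$ viewed as a $(G_K)^{\aff}$-module via $\Ad \circ f$, Remark \ref{rem:H1}(ii) gives $H^1(G_K, \Lie H_K) = 0$, which is (iii). The main obstacle in the whole argument is the step (iii)$\Rightarrow$(ii): one must bridge the purely cohomological vanishing to an actual geometric smoothness of $\gamma_f$, and in particular establish smoothness of $M_K$ at $f$ a priori, for which the equivariance-plus-dimension argument above is essential.
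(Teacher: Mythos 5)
Your overall architecture matches the paper's: the same chain of implications, the same use of Lemma \ref{lem:bz} to identify $\Im(T_{e_H}\gamma_f)$ with $B^1(G,\Lie(H))$ inside $Z^1(G,\Lie(H))=T_fM$, and the same derivation of the last assertion from Remark \ref{rem:H1}(ii) together with invariance of linear reductivity under field extension. The treatment of (i)$\Leftrightarrow$(ii) and (ii)$\Rightarrow$(iii) is fine; for (ii)$\Rightarrow$(i) your detour through smoothness of $M$ is harmless but unnecessary, since the fiberwise criterion only needs flatness of $H\times M\to M$ over $M$, which is automatic.

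The substantive issue is (iii)$\Rightarrow$(ii), which you yourself identify as the main obstacle but then dispatch in one sentence that is circular as written: the ``orbit--stabilizer count'' $\dim(H_K\cdot f)=\dim H_K-\dim\Stab_{H_K}(f)$ only yields $\dim T_fM_K$ if you already know $\dim\Stab_{H_K}(f)=\dim\Lie(\Stab_{H_K}(f))$, i.e.\ the smoothness of the stabilizer --- which is exactly what you then claim to ``force''. The non-circular version is the sandwich
\[ \dim T_fM \;\ge\; \dim\cO_{M,f}\;\ge\;\dim\overline{H\cdot f}\;=\;\dim H-\dim\Stab_H(f)\;\ge\;\dim H-\dim\Lie(\Stab_H(f))\;=\;\dim T_fM, \]
the last equality by surjectivity of $T_{e_H}(\gamma_f)$ and the identification $\Lie(\Stab_H(f))=\Ker(T_{e_H}\gamma_f)=H^0(G,\Lie(H))$; but you supply neither this identification nor the fact that the orbit is a locally closed integral subscheme through $f$ (needed for the second inequality), and even after the count closes you still owe an argument that the orbit is \emph{open} in $M$ near $f$ and that $\gamma_f$ is flat onto it. The paper proceeds differently at each of these points: it gets smoothness of $\Stab_H(f)$ directly from $H^1(G,\Lie(H))=0$ via \cite[II.5.2.8]{DG} rather than from a dimension count, proves flatness of $H\to X$ (the schematic image) by generic flatness plus homogeneity, and then shows $\cO_{M,f}\to\cO_{H\cdot f,f}$ is an isomorphism by a graded-ring argument exploiting $T_f(H\cdot f)=T_f(M)$ --- a step that cannot be skipped, since a priori $M$ could be non-reduced or have extra components at $f$. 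Your route can be completed (regularity of $\cO_{M,f}$ from the sandwich, openness of the orbit by Chevalley plus equivariance, flatness by miracle flatness), but as it stands the key step is asserted rather than proved.
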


\begin{proof}
(i)$\Rightarrow$(ii) Just observe that $\gamma$ is a morphism of
$M$-schemes relative to the second projections; moreover,
$\gamma_f$ is obtained from $\gamma$ by base change via
$f : \Spec(K) \to M$.

(ii)$\Rightarrow$(i) This follows from the above observation
together with the fiberwise criterion for smoothness
(see \cite[IV.17.8.2]{EGA}).

(ii)$\Rightarrow$(iii) Since $\gamma_f$ is smooth at $e_H$,
its differential at this point is surjective. This yields
the assertion in view of Lemma \ref{lem:bz}.

(iii)$\Rightarrow$(ii) We may assume that $K = k$ is algebraically
closed. Then $H$ is a disjoint union of $H(k)$-cosets of the neutral
component $H^0$, and hence we may further assume that $H$ 
is a smooth connected algebraic group. 
Also, $f \in M(k)$ and the morphism $\gamma_f : H \to M$
has a surjective differential at $e_H$ by Lemma \ref{lem:bz} again.
We now adapt to this setting some standard considerations for
actions of smooth algebraic groups on schemes of finite type
(recall that $M$ is locally of finite type).

Consider the schematic image $X$ of $\gamma_f$; this is a closed
integral subscheme of $M$, stable under $H(k)$ and hence 
under $H$ (see \cite[II.5.3.2]{DG}). Moreover, $\gamma_f$
factors uniquely through a dominant morphism 
$\varphi: H \to X$, equivariant for the action of $H$ by left
multiplication on itself. By generic flatness (see
\cite[IV.6.9.1]{EGA}), there exists a dense open subscheme
$U$ of $X$ such that the pullback $\varphi^{-1}(U) \to U$ is flat.
Since the $H(k)$-translates of $\varphi^{-1}(U)$ cover $H$,
it follows that $\varphi$ is flat.

The fiber of $\varphi$ at $e_H$ is the isotropy subgroup
scheme $\Stab_H(f)$, which is smooth in view of the 
vanishing of $H^1(G,\Lie(H))$ (see \cite[II.5.2.8]{DG}).
Thus, $\varphi$ is smooth at $e_H$ (see \cite[IV.17.5.1]{EGA})
and hence everywhere by equivariance. So the image of 
$\varphi$ (the orbit $H \cdot f$) is open in $X$ and smooth
(see \cite[IV.2.4.6, IV.6.8.3]{EGA}); 
in particular, $H \cdot f$ is locally closed in $M$. 
Also, $T_f(H \cdot f) = T_f(M)$ by Lemma \ref{lem:bz} again.

We now claim that the natural homomorphism of local rings
\[ u : \cO_{M,f} \longrightarrow \cO_{H \cdot f,f} \] 
is an isomorphism. Indeed, $u$ is clearly surjective.
Consider the associated graded homomorphism
\[ \gr(u) : \gr(\cO_{M,f}) \longrightarrow 
\gr(\cO_{H \cdot f,f}). \]
The right-hand side is a polynomial ring in $n$ generators
of degree $1$, where $n = \dim(H \cdot f)$. Also,
$\gr(u)$ induces an isomorphism on subspaces of degree $1$.
As the left-hand side is generated in degree $1$, it
follows that $\gr(u)$ is an isomorphism. As a consequence,
$u$ is injective, proving the claim.

By this claim, $H \cdot f$ contains an open neighborhood 
of $f$ in $M$. Using equivariance again, it follows that 
$H \cdot f$ is open in $M$. Since the morphism 
$H \to H \cdot f$ is smooth, so is $\gamma_f$.

Finally, if $G^{\aff}$ is linearly reductive,
then Remark \ref{rem:H1}(ii) and \cite[II.3.3.7]{DG}
yield the vanishing of $H^1(G_K, \Lie(H_K))$ for any field 
extension $K/k$.  
\end{proof}

\section{Representability: first steps}
\label{sec:rep}

\subsection{Morphisms to \'etale group schemes}
\label{subsec:etale}

For any group scheme $G$, we denote by $G^0$ its neutral 
component, i.e., the connected component of $e_G$. Recall that 
$G^0$ is an algebraic group, and is the kernel of the homomorphism 
\[ \gamma: G \longrightarrow \pi_0(G), \] 
where $\pi_0(G)$ is the \'etale group scheme of connected components. 
Moreover, $\gamma$ is faithfully flat (see \cite[II.5.1.8]{DG}), 
and hence a $G^0$-torsor.

\begin{proposition}\label{prop:lae}
Let $G$ be an algebraic group, and $H$ an \'etale group scheme. 

\begin{enumerate}

\item[{\rm (i)}] 
The pullback $\gamma^*: \bfHom(\pi_0(G),H) \to \bfHom(G,H)$ 
is an isomorphism.

\item[{\rm (ii)}] 
The functor $\bfHom(G,H)$ is represented by an \'etale group 
scheme $N$. If $H$ is finite, then $N$ is finite as well.

\end{enumerate}

\end{proposition}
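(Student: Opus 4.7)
My plan is to reduce (i) to the fact that morphisms from geometrically connected schemes into étale schemes are rigid, and then derive (ii) from (i) by invoking Weil restriction along finite étale extensions.

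For (i), I would apply Corollary \ref{cor:descent} to the $G^0$-torsor $\gamma : G \to \pi_0(G)$, which identifies the image of $\gamma^*$ with the closed subfunctor of $G^0$-invariants in $\bfHom(G,H)$ (under left translation on the source). Thus it suffices to show that every morphism $f : G \times S \to H$ is $G^0$-invariant, i.e.\ that the two morphisms $\phi, \psi : G^0 \times G \times S \to H$ given by $(g_0, g, s) \mapsto f(g_0 g, s)$ and $(g_0, g, s) \mapsto f(g, s)$ coincide. Their equalizer $E$ is closed in $G^0 \times G \times S$ (because $H$ is separated) and open (because $H$ is unramified over $k$, so the diagonal $\Delta_H$ is an open immersion in $H \times H$), hence clopen. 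Since $G^0$ is geometrically connected with $k$-point $e_{G^0}$, each connected component of $G^0 \times G \times S$ has the form $G^0 \times C \times D$ for components $C, D$ of $G, S$, and each such component meets the section $\{e_{G^0}\} \times G \times S$, which is contained in $E$ by construction. Hence $E$ is all of $G^0 \times G \times S$, proving $\phi = \psi$.

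For (ii), part (i) gives $\bfHom(G,H) \simeq \bfHom(\pi_0(G), H)$. Since $G$ is algebraic, $\pi_0(G)$ is a finite étale group scheme and so decomposes as $\coprod_i \Spec(K_i)$ for finitely many finite separable extensions $K_i/k$. Combining the isomorphism $\bfHom(\coprod_i X_i, Y) \simeq \prod_i \bfHom(X_i, Y)$ with the identification of $\bfHom(\Spec(K_i), H)$ with the Weil restriction $\bfR_{K_i/k}(H_{K_i})$ (both recalled in Section \ref{sec:fun}), I obtain
\[
\bfHom(G,H) \;\simeq\; \prod_i \bfR_{K_i/k}(H_{K_i}).
\]
Since $H_{K_i}$ is étale and locally of finite type over $K_i$, and $K_i/k$ is finite étale, each Weil restriction is represented by an étale scheme over $k$, finite étale when $H$ is finite. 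The finite product $N$ is therefore étale (and finite when $H$ is), and the group-functor structure on $\bfHom(G,H)$ endows $N$ with a group-scheme structure.

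The main step to verify carefully is the clopen-equals-everything argument in (i), which hinges on $G^0$ being geometrically connected; the rest is an assembly of earlier material in the paper together with the standard fact that Weil restriction along a finite étale extension preserves étaleness (and finiteness).
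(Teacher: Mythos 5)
Your proof is correct, and while it starts from the same reduction as the paper (Corollary \ref{cor:descent} applied to the $G^0$-torsor $\gamma$), it then diverges in both parts. For (i), the paper passes to $\bar{k}$, splits $G$ into translates of $G^0$, reduces to $G$ connected and to pointed morphisms via Lemma \ref{lem:sdir}, and argues that the fiber of $f$ over $e_H$ is clopen in the connected scheme $G\times S$; you instead show directly that the equalizer of the two maps $G^0\times G\times S\to H$ is clopen (open because $H$ unramified makes $\Delta_H$ an open immersion, closed by separatedness) and meets every connected component, which avoids any base change and the semidirect-product decomposition --- arguably the cleaner route. One small inaccuracy: the connected components of $G^0\times G\times S$ need not be of the form $G^0\times C\times D$ with $C$, $D$ components of $G$, $S$ (components of $G\times S$ are generally not products); but since $G^0$ is geometrically connected, every component is of the form $G^0\times Z$ with $Z$ a component of $G\times S$, and that is all your argument uses, so the slip is harmless. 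For (ii), the paper uses Galois descent to reduce to $G$ constant and then exhibits $\bfHom(G,H)$ explicitly, whereas you decompose $\pi_0(G)=\coprod_i\Spec(K_i)$ and invoke Weil restriction along the finite separable extensions $K_i/k$; this works, though you should note why $\bfR_{K_i/k}(H_{K_i})$ is representable (every finite set of points of the \'etale scheme $H_{K_i}$ lies in an affine open, or simply cite Lemma \ref{lem:fla} since $\pi_0(G)$ is finite), after which preservation of \'etaleness and finiteness by $\bfR_{K_i/k}$ is standard. Your approach trades the paper's descent arguments for standard properties of Weil restriction; both are complete.
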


\begin{proof}
(i) By Corollary \ref{cor:descent}, $\gamma^*$ identifies
$\bfHom(\pi_0(G),H)$ with the subfunctor of $G^0$-invariants 
in $\bfHom(G,H)$. So it suffices to show
that for any scheme $S$, every morphism $G \times S \to H$ is
invariant under $G^0$. For this, we may assume $k$ algebraically
closed by descent. Then the group schemes $\pi_0(G)$ and
$H$ are constant; moreover, $G = \coprod_{i \in I} g_i G^0$
for some family $(g_i)_{i \in I}$ of $G(k)$, where $I = \pi_0(G)(k)$.
So we may further assume that $G$ is connected. Then it suffices
to show that every morphism $f : G \times S \to H$ that sends
$e_G \times S$ to $e_H$ is constant (Lemma \ref{lem:sdir}). 
We may assume that $S$ is connected; then $G \times S$ is connected 
as well (see e.g.  \cite[Exp.~V, Lem.~2.1.2]{SGA3}). The schematic 
fiber of $f$ at $e_H$ is open and closed in $G \times S$, and contains
$e_G \times S$; so this fiber is the whole $G \times S$ as desired.

(ii) In view of (i), we may replace $G$ with $\pi_0(G)$, and hence
assume that $G$ is finite and \'etale. Using Galois descent, we may 
further assume that $G$ is constant. Then $\bfHom(G,H)$ 
is the constant group scheme associated with the group of maps 
$G(k) \to H(k)$. 
\end{proof}

\begin{corollary}\label{cor:con}
Let $G$ be a connected algebraic group, and $H$ a group scheme.
Then the inclusion of $H^0$ in $H$ induces isomorphisms
\[ \bfHom(G,H^0;e_G \mapsto e_H) \stackrel{\sim}{\longrightarrow} 
\bfHom(G,H;e_G \mapsto e_H), \]
 \[ \bfHom_{\gp}(G,H^0) \stackrel{\sim}{\longrightarrow} 
 \bfHom_{\gp}(G,H). \]
\end{corollary}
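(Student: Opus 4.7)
The plan is to use the short exact sequence
\[ 1 \longrightarrow H^0 \stackrel{i}{\longrightarrow} H
\stackrel{q}{\longrightarrow} \pi_0(H) \longrightarrow 1 \]
recalled at the start of Section \ref{subsec:etale}, together with the fact that $G$ is connected, in order to reduce both statements to the triviality of pointed morphisms from $G$ to the \'etale group scheme $\pi_0(H)$.

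First I would apply Lemma \ref{lem:lex} with $X = G$, $x = e_G$ to the above exact sequence (valid since $i$ is a closed immersion whose schematic image is the kernel of $q$) to obtain an exact sequence of pointed Hom functors
\[ 1 \to \bfHom(G, H^0; e_G \mapsto e_{H^0}) \to \bfHom(G, H; e_G \mapsto e_H) \stackrel{q_*}{\to} \bfHom(G, \pi_0(H); e_G \mapsto e_{\pi_0(H)}). \]
Next I would show the right-hand term is the trivial functor. By Proposition \ref{prop:lae}(i), the pullback identifies $\bfHom(G,\pi_0(H))$ with $\bfHom(\pi_0(G),\pi_0(H))$; since $G$ is connected, $\pi_0(G) = \Spec(k)$, so this functor is canonically $\pi_0(H)$ itself, identified with its subfunctor of constant morphisms. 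Under this identification, the pointed subfunctor is the fiber over $e_{\pi_0(H)}$, namely $\Spec(k)$. The first isomorphism then follows from the exact sequence above.

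For the second isomorphism, every group scheme homomorphism $f: G \times S \to H$ sends $e_{G \times S}$ to $e_{H \times S}$, so $\bfHom_{\gp}(G,H)$ is a subfunctor of $\bfHom(G,H; e_G \mapsto e_H)$. By the first isomorphism, every such $f$ factors uniquely as $i \circ \tilde{f}$ with $\tilde{f}: G \times S \to H^0$, and a routine check using that $i$ is a monomorphism of group schemes shows $\tilde{f}$ is itself a homomorphism, giving the second isomorphism.

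There is essentially no obstacle in this argument: it is a direct consequence of Proposition \ref{prop:lae}(i) and Lemma \ref{lem:lex}. The only point meriting a moment's care is the identification of the pointed Hom functor into $\pi_0(H)$ as trivial, which follows transparently from the connectedness of $G$.
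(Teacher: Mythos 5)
Your argument is correct and is essentially the proof given in the paper: both reduce to the exact sequence $1 \to H^0 \to H \to \pi_0(H)$, apply Lemma \ref{lem:lex}, and use Proposition \ref{prop:lae} together with the connectedness of $G$ to see that $\bfHom(G,\pi_0(H); e_G \mapsto e_{\pi_0(H)}) = \Spec(k)$. Your extra remark on deducing the statement for $\bfHom_{\gp}$ via the monomorphism $H^0 \to H$ is a detail the paper leaves implicit.
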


\begin{proof}
By Proposition \ref{prop:lae}, the natural morphism 
$\pi_0(H) \to \bfHom(G,\pi_0(H))$ is an isomorphism. Thus, 
$\bfHom(G,\pi_0(H); e_G \mapsto e_{\pi_0(H)}) = \Spec(k)$.
Together with the exact sequence 
$1 \to H^0 \to H \to \pi_0(H)$ and with Lemma \ref{lem:lex},
this yields the assertions.
\end{proof}

\subsection{Two finiteness notions}
\label{subsec:FT}

We introduce finiteness notions on schemes, which will be
very convenient for proving Theorem \ref{thm:sred}. 

We say that a scheme $X$ satisfies (FT) (resp.~(AFT)) if every 
connected component of $X$ is of finite type (resp.~affine
of finite type). Since $X$ is locally of finite type, its
connected components are open (see \cite[I.Cor.~6.1.9]{EGA}).
Thus, (FT) (resp.~(AFT)) is equivalent to $X$ being a sum
(in the sense of \cite[I.3.1]{EGA}) of schemes of finite type
(resp.~affine of finite type).

We now record some basic properties of these notions,
with no attempt for exhaustivity.

\begin{lemma}\label{lem:ft}

Consider a group scheme $G$ and two schemes $X$, $Y$.

\begin{enumerate}

\item[{\rm (i)}] 
$G$ satisfies (FT). Also, $G$ satisfies (AFT) if and only if 
$G^0$ is affine.

\item[{\rm (ii)}] 
If $X$ is \'etale, then it satisfies (AFT).

\item[{\rm (iii)}] 
If $X$ and $Y$ satisfy (FT) (resp.~(AFT)), then so does 
$X \times Y$.

\item[{\rm (iv)}] If $X$ satisfies (FT) (resp.~(AFT)), then
so does the $K$-scheme $X_K$ for any field extension $K/k$.

\item[{\rm (v)}] If there exists a finite Galois extension
$K/k$ such that $X_K$ satisfies (FT) (resp.~(AFT)),
then so does $X$.

\item[{\rm (vi)}] If $Y$ is a closed subscheme of $X$, 
and $X$ satisfies (FT) (resp.~(AFT)), then so does $Y$.

\end{enumerate}

\end{lemma}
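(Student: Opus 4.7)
The plan is to verify the six items by direct reduction, leaning on two standard facts throughout: connected components of a locally noetherian scheme are open (hence clopen), and both finite type and affineness are fpqc-local. Most of the items are formal once these are invoked; I will describe each, then flag the one spot where real work is needed.

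For (i), I would start from the $G^0$-torsor $G \to \pi_0(G)$ recalled in Section~\ref{subsec:etale}. After base change to $\bar k$, each connected component of $G_{\bar k}$ is a translate of $G^0_{\bar k}$, and hence isomorphic to it. So for any connected component $C$ of $G$, $C_{\bar k}$ is a finite disjoint union of copies of $G^0_{\bar k}$; since $G^0$ is an algebraic group, this forces $C$ to be of finite type by fpqc descent, proving (FT). The (AFT) characterization is then immediate: the ``only if'' holds because $G^0$ is itself a connected component, and for the ``if'', each $C_{\bar k}$ is a finite disjoint union of affines, hence affine, so $C$ is affine by fpqc descent of affineness. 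Part (ii) is direct from the étale structure theorem. For (iii), a connected component of $X \times Y$ projects to a unique connected component of each factor, so it suffices to handle $C \times D$ for connected components $C \subset X$, $D \subset Y$; this product is of finite type (resp.~affine of finite type), hence noetherian, so its finitely many connected components are clopen, and automatically affine in the second case. Parts (iv) and (vi) are analogous routine checks, using that base change preserves both properties and closed subschemes inherit them.

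The one substantive point is (v), where the Galois hypothesis is actually used. Given a connected component $C$ of $X$, its preimage $C_K$ in $X_K$ is clopen, hence a union of connected components of $X_K$. The Galois group $\Gal(K/k)$ acts on $X_K$ and permutes its connected components, and the components lying above $C$ form a single $\Gal(K/k)$-orbit, which has size at most $[K:k]$. So $C_K$ is a finite disjoint union of connected components of $X_K$, hence of finite type (resp.~affine of finite type) by the hypothesis on $X_K$; these properties then descend to $C$ along the finite faithfully flat morphism $\Spec(K) \to \Spec(k)$.

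The main obstacle I anticipate is making this Galois orbit argument in (v) precise: one has to check that the connected components of $X_K$ above a single connected component of $X$ form a single $\Gal(K/k)$-orbit, which reduces to the standard fact that $\pi_0$ commutes with Galois descent for schemes locally of finite type. Everything else is routine bookkeeping with the two standard facts recalled at the outset.
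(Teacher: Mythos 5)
Your proof is correct and follows essentially the same route as the paper's: the paper likewise reduces everything to the facts that connected components are clopen, that finite type/affineness are stable under products, base change, closed subschemes and fpqc descent, and it handles (v) by exactly your Galois-orbit argument (grouping the components of $X_K$ into finite $\Gal(K/k)$-orbits and descending the resulting clopen pieces). The only cosmetic difference is in (i), where the paper simply cites the theorem of the neutral component for (FT), whereas you rederive the quasi-compactness of the components from the $G^0$-torsor $G \to \pi_0(G)$ — which is fine, since that torsor structure is what makes each component lie over a single finite étale component of $\pi_0(G)$ and hence become a \emph{finite} union of translates of $G^0_{\bar k}$ after base change.
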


\begin{proof}
(i) The assertion on (FT) follows from the ``theorem of the
neutral component'' (see \cite[II.5.1.1]{DG} or 
\cite[Exp.~VIA, Cor.~2.4.1]{SGA3}). 

If $G$ satisfies (AFT), then $G^0$ is clearly affine.
Conversely, assume that $G^0$ is affine, and consider
a connected component $C$ of $G$. Then $C_{\bar{k}}$
is the sum of finitely many translates of $G^0_{\bar{k}}$
and hence is affine. By descent, it follows that $C$ is
affine as well.

(ii) Just recall that every connected component of $X$ 
is of the form $\Spec(K)$ for some finite (separable)
field extension $K/k$.

(iii) This follows from the fact that finite products commute
with sums (see \cite[I.3.2.8]{EGA}), and preserve the properties 
of being of finite type (resp.~affine of finite type); 
see \cite[I.Prop.~6.3.4]{EGA}.

(iv) This is checked by a similar argument.

(v) Assume that $X_K$ satisfies (FT) for $K/k$ finite
Galois with group $\Gamma$. Then $\Gamma$ acts
on $X_K$, and permutes its connected components.
Thus, $X_K$ is a sum of $\Gamma$-stable schemes
of finite type. By Galois descent for subschemes, 
it follows that $X$ is a sum of schemes of finite type, 
i.e., it satisfies (FT). The same argument works for (AFT).

(vi) Let $Y'$ be a connected component of $Y$. Then $Y'$ 
is a closed subscheme of a unique connected component 
$X'$ of $X$. So the assertion follows from 
\cite[I.Prop.~6.3.4]{EGA} again.
\end{proof}

\subsection{Morphisms from finite group schemes}
\label{subsec:finite}

We first record an easy observation:

\begin{lemma}\label{lem:fla}
Let $X$ be a finite scheme, and $H$ a group scheme. Then 
the functor $\bfHom(X,H)$ is represented by a group scheme.
\end{lemma}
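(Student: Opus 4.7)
The approach is to realize $\bfHom(X,H)$ as a Weil restriction and represent it by gluing open affine subfunctors indexed by affine opens of $H$. Write $X = \Spec(A)$ with $\dim_k A < \infty$, so that $\bfHom(X,H)(R) = H(A \otimes_k R)$ for any $k$-algebra $R$. First I would reduce to $X$ connected: decomposing $X = \coprod_{i=1}^n X_i$ into connected components (each $X_i = \Spec(A_i)$ with $A_i$ a finite local $k$-algebra), the product formula from Section~\ref{sec:fun} yields
$$\bfHom(X, H) \simeq \prod_{i=1}^n \bfHom(X_i, H),$$
and a finite product of representable group functors is representable. So one may assume $X = \Spec(A)$ with $A$ local Artinian.

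Next, cover $H$ by affine open subschemes $U_\alpha = \Spec(B_\alpha)$ of finite type, and for each such $U = \Spec(B)$ let $\bfHom(X, U) \subseteq \bfHom(X, H)$ be the subfunctor of morphisms $X \times S \to H$ factoring through $U$. Two facts need verification. First, $\bfHom(X, U)$ is an open subfunctor of $\bfHom(X, H)$: given $f: X \times S \to H$, the preimage $f^{-1}(H \setminus U)$ is closed in $X \times S$, and since $X$ is finite the projection $X \times S \to S$ is proper, hence closed, so the complement in $S$ of the image of $f^{-1}(H \setminus U)$ is precisely the open locus where $f$ factors through $U$. Second, $\bfHom(X, U)$ is represented by an affine $k$-scheme of finite type: evaluating on an affine test scheme $S = \Spec(R)$, choosing a $k$-basis $a_1, \ldots, a_n$ of $A$ and generators $b_1, \ldots, b_m$ of $B$, a ring map $B \to A \otimes_k R$ is specified by $b_i \mapsto \sum_j a_j \otimes r_{ij}$ with $r_{ij} \in R$, and the defining relations of $B$ (using the multiplication table of $A$ in the basis $(a_j)$) translate into polynomial equations in the $r_{ij}$, realizing $\bfHom(X, U)$ as a closed subscheme of $\bA^{mn}$.

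Finally I would verify that the open subfunctors $\bfHom(X, U_\alpha)$ cover $\bfHom(X, H)$: for $f: X \times S \to H$ and any $s \in S$, the image of the finite fiber $X \times \{\bar s\}$ in $H$ is a finite set of points of $H$, contained in some affine open $U_\alpha$; the openness argument above then supplies an open neighborhood of $s$ on which $f$ factors through $U_\alpha$. Standard gluing of open subfunctors produces the scheme representing $\bfHom(X, H)$, and the pointwise group structure inherited from $H$ makes it a group scheme. The main obstacle is this covering step, which requires that every finite subset of $H$ lies in some affine open; this reduces, after restricting to the quasi-compact open subscheme containing the finitely many connected components meeting the subset, to the classical fact that every algebraic group over $k$ is quasi-projective.
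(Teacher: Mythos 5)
Your proof is correct and follows essentially the same route as the paper's: identify $\bfHom(X,H)$ with the Weil restriction $\bfR_{X/k}(H)$ and represent it by gluing open affine subfunctors $\bfHom(X,U_\alpha)$, the key input being that every finite subset of $H$ lies in an affine open of finite type (which the paper extracts from Lemma \ref{lem:ft} and then delegates the gluing to \cite[I.1.6.6]{DG} and its proof). You simply write out in full the details that the paper outsources to that reference.
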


\begin{proof}
Recall the isomorphism $\bfHom(X,H) \simeq \bfR_{X/k}(H)$,
where $\bfR_{X/k}$ denotes the Weil restriction functor.
By Lemma \ref{lem:ft}, every finite subset of the underlying
topological space of $H$ is contained in an open affine
subscheme of finite type. The representability of 
$\bfR_{X/k}(H)$ by a scheme (locally of finite type) follows 
from this by \cite[I.1.6.6]{DG} and its proof. 
\end{proof}

Next, let $G$, $H$ be group schemes, where $G$ is finite. 
Combining Lemmas \ref{lem:gp}, \ref{lem:ft} and \ref{lem:fla}, 
we see that $\bfHom_{\gp}(G,H)$ is represented by a scheme $M$ 
satisfying (FT). If $H$ is algebraic, then $M$ is of finite type.

Also, recall from \cite[II.4.7.1]{DG} that a group scheme $G$
is \emph{infinitesimal} if $G$ is finite and $e_G$ is its unique
point.

\begin{proposition}\label{prop:finite}
With the above notation and assumptions, the scheme $M$ is affine 
of finite type under either of the following conditions:

\begin{enumerate}

\item[{\rm (i)}] $G$ is infinitesimal.

\item[{\rm (ii)}] $H$ is affine.

\item[{\rm (iii)}] $H$ is connected.

\end{enumerate}

\end{proposition}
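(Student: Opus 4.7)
Plan: We already know from the discussion preceding the proposition that $M = \bfHom_\gp(G,H)$ is represented by a scheme satisfying (FT). The strategy in each case is to exhibit a closed subscheme $H' \subseteq H$ that is affine of finite type and through which every $S$-point of $M$ (viewed as a homomorphism $G \times S \to H$) factors. Granted this, $M$ embeds as a subfunctor of $\bfHom(G, H') = \bfR_{G/k}(H')$, which is affine of finite type (Weil restriction of an affine scheme of finite type along a finite scheme is affine of finite type, by the explicit construction underlying Lemma \ref{lem:fla}); since $M$ is closed in $\bfHom(G,H)$ by Lemma \ref{lem:gp}, it is then closed in $\bfHom(G,H')$ and hence affine of finite type.

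For case (ii), take $H' = H$: the hypothesis that $H$ is affine combined with our standing local finite-type convention and the quasi-compactness of an affine scheme forces $H$ to be of finite type. For case (i), take $H'$ to be any affine open neighborhood of $e_H$ in $H^0$, which is of finite type by Lemma \ref{lem:ft}(i). For any $f \in \bfHom_\gp(G,H)(S)$, the preimage $f^{-1}(H' \times S)$ is an open subscheme of $G \times S$ containing the neutral section $\{e_G\} \times S$; since $G$ is infinitesimal, $G \times S$ and this section share the same underlying topological space, so $f^{-1}(H' \times S) = G \times S$ and $f$ factors through $H' \times S$.

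Case (iii) is the technical core. Set $n = \dim_k \cO(G)$, and use the affinization exact sequence $1 \to H_\ant \to H \to H^\aff \to 1$ from the introduction, in which $H_\ant$ is smooth connected commutative anti-affine and central in $H$, while $H^\aff$ is affine (of finite type, since $H$ is connected and hence of finite type). By the classification of anti-affine groups over $k$, $H_\ant$ is an extension of an abelian variety by a torus (in any characteristic) or by a vector group (in characteristic zero only), on each summand of which the multiplication-by-$n$ map has finite kernel; hence $[n]$ is an isogeny on $H_\ant$ and $H_\ant[n]$ is a finite group scheme. Any homomorphism $G \to H_\ant$ factors through the abelianization $G^\ab$, whose order divides $n$ and which is therefore annihilated by $n$, so the homomorphism factors through $H_\ant[n]$. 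Thus $\bfHom_\gp(G,H_\ant) = \bfHom_\gp(G,H_\ant[n])$ is representable by a finite scheme (case (ii) applied to the finite affine group $H_\ant[n]$).

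Applying Lemma \ref{lem:lex} to the affinization sequence yields an exact sequence $1 \to \bfHom_\gp(G,H_\ant) \to M \to \bfHom_\gp(G,H^\aff)$, whose right-hand term is affine of finite type by case (ii). Centrality of $H_\ant$ makes the finite group scheme $\bfHom_\gp(G,H_\ant)$ act freely on $M$ by pointwise multiplication, with quotient identifying with the image of $M \to \bfHom_\gp(G,H^\aff)$; consequently this restriction morphism is finite, and since its target is affine of finite type, so is $M$. The main obstacle in this case is upgrading the fiber-wise finiteness of $M \to \bfHom_\gp(G,H^\aff)$ to finiteness of the morphism itself, which is achieved precisely through the free action of the finite group scheme $\bfHom_\gp(G,H_\ant)$ on $M$ arising from the centrality of $H_\ant$.
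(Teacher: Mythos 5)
Your arguments for (i) and (ii) are correct, and both differ from the paper's. For (ii) you note directly that an affine, locally of finite type $H$ is of finite type, so $M$ is closed in the Weil restriction $\bfR_{G/k}(H)$, which is affine of finite type; the paper instead reduces to $G$ infinitesimal or constant. For (i) you use that $G \times S$ has the same underlying space as its neutral section to force every homomorphism into an affine open neighbourhood $H'$ of $e_H$ in $H^0$; the paper instead factors homomorphisms through a Frobenius kernel $H_n$ and realizes $\bfHom_{\gp}(G,H_n)$ inside a space of linear maps. Your route is shorter; the only point to make explicit is that $H'$ is open rather than closed, which is harmless because $G$ is finite (hence proper), so $\bfHom(G,H')$ is an open subfunctor of $\bfHom(G,H)$ containing the closed subscheme $M$.

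Case (iii), however, has two genuine gaps. First, $\bfHom_{\gp}(G,H_{\ant})$ need not be finite: case (ii) only yields that it is affine of finite type, and finiteness genuinely fails. Take $G = \alpha_p$ and $H = E$ a supersingular elliptic curve (so $H = H_{\ant}$): by Example \ref{ex:alpha}, $\bfHom_{\gp}(\alpha_p,E)$ is the fibre at $0$ of the $p$-th power operation on $\Lie(E)$, which vanishes in the supersingular case, so this functor is represented by $\bA^1$. Your factorization through $H_{\ant}[n]$ is correct (though it quietly invokes Deligne's theorem that a finite commutative group scheme is killed by its order, a heavier input than anything the paper uses), but it does not rescue finiteness, since $\bfHom_{\gp}(\alpha_p,E[p])$ is already this $\bA^1$.

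Second, and independently: even if the acting group scheme $P$ were finite, a free action whose orbits are the fibres of $M \to B := \bfHom_{\gp}(G,H^{\aff})$ only makes this morphism a pseudo-torsor, i.e.\ separated and quasi-finite; it does not make it finite or even affine. Nothing in your argument controls the image of $M$ in $B$ (the locus of homomorphisms $G \to H^{\aff}$ that lift to $H$), which need not be closed, and a quasi-finite scheme over an affine base need not be affine --- compare $\bA^2 \setminus \{0\} \hookrightarrow \bA^2$, a pseudo-torsor under the trivial group. So the step you identify as the crux is precisely the one that is not established. The paper avoids the dévissage along $1 \to H_{\ant} \to H \to H^{\aff} \to 1$ altogether: it reduces to $G$ constant of order $n$ over $\bar{k}$, observes that every homomorphism then factors through the closed subscheme $H[n]$ (the schematic fibre at $e_H$ of the $n$-th power map of schemes), and shows $H[n]$ is affine using the Chevalley sequence $1 \to N \to H \to A \to 1$, since $H[n]$ is closed in $q^{-1}(A[n])$ with $q$ affine and $A[n]$ finite.
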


\begin{proof}
(i) We may assume that $\charac(k) = p > 0$.
For any scheme $X$, we denote by $F_X: X \to X^{(p)}$
the relative Frobenius morphism and by
\[ F^n_X : X \longrightarrow X^{(p^n)} \] 
its $n$th iterate, where $n$ is a positive integer. 
If $X$ is a group scheme, then $F^n_X$ is a homomorphism; 
we denote by $X_n$ its kernel (the $n$th Frobenius kernel). 

By assumption, we have $G = G_n$ for $n \gg 0$. For any scheme
$S$ and any homomorphism $f : G \times S \to H$, we have a
commutative diagram
\[ \xymatrix{
G \times S \ar[r]^-{f} \ar[d]_{(\pi_G, F^n_S)} & 
H \ar[d]_{F^n_H}\\
S^{(p^n)}  \ar[r]^-{f^{(n)}} & H^{(p^n)}. \\
} \]
Thus, we have identically on $G \times S$
\[ F^n_H(f(g,s)) = f^{(n)}(F^n_S(s))  = F^n_H f(e_G,s) 
= e_{H^{(p^n)}}. \]
So $f$ factors uniquely through $H_n$. Thus,
$\bfHom_{\gp}(G,H_n) \stackrel{\sim}{\longrightarrow} \bfHom_{\gp}(G,H)$. 
As $G$ and $H_n$ are finite, we may view
$\bfHom_{\gp}(G,H_n)$ as the functor of Hopf algebra 
homomorphisms $\cO(H_n) \to \cO(G)$. This is a closed subfunctor
of the functor of morphisms of $k$-modules $\cO(H_n) \to \cO(G)$,
and the latter functor is represented by an affine space.

(ii) Since Hom functors commute with base change, we may assume
$k$ algebraically closed. Then $G \simeq I \rtimes F$, where $I = G^0$
is infinitesimal, and $F \simeq \pi_0(G)$ is the constant group scheme 
associated with $G(k)$ (see e.g.  \cite[II.5.2.4]{DG}). In view of 
Lemma \ref{lem:hom}, we may thus assume in addition
that $G$ is either infinitesimal or constant. 
In the former case, we conclude by (i). In the latter case, 
$\bfHom(G,H) \simeq H^n$, where $n$ denotes the order of $G$. 
Thus, $\bfHom(G,H)$ is affine of finite type, and hence so is 
$\bfHom_{\gp}(G,H)$.

(iii) By arguing as in the proof of (ii), we reduce to the case where
$k$ is algebraically closed and $G$ is constant of order $n$.
Then $g^n = e_G$ identically on $G$. Thus, for any scheme $S$,
every homomorphism $f : G \times S \to H$ factors uniquely 
through the schematic fiber at $e_H$ of the $n$th power map of $H$.
Denoting this fiber by $H[n]$, it follows that $\bfHom_{\gp}(G,H)$
is isomorphic to a closed subscheme of 
$\bfHom(G,H[n]) \simeq H[n]^n$. So it suffices to show that 
$H[n]$ is affine.

As $H$ is connected, there exists an exact sequence of algebraic
groups
\begin{equation}\label{eqn:chevalley}
1 \longrightarrow N \longrightarrow H 
\stackrel{q}{\longrightarrow} A \longrightarrow 1, 
\end{equation}
where $N$ is affine and $A$ is an abelian variety
(see  \cite[Lem.~IX.2.7]{Raynaud}). Thus, 
$H[n]$ is a closed subscheme of the pullback $q^{-1}(A[n])$.
Since $A[n]$ is a finite group scheme and $q$ is affine, 
this yields the desired statement.
\end{proof}

\begin{example}\label{ex:ell}
Let $G$ be the constant group scheme of order $2$, and 
$H = E \rtimes G$, where $E$ is an elliptic curve on which
$G$ acts by $\pm 1$. Then $H$ is a smooth proper non-connected
algebraic group, and one readily checks that $\bfHom_{\gp}(G,H)$
is represented by  $H[2] \simeq E[2] \coprod E$.
So $\bfHom_{\gp}(G,H)$ is not affine.
\end{example}

\begin{example}\label{ex:alpha}
Assume that $p > 0$ and consider the infinitesimal group scheme 
$\alpha_p$, the kernel of the Frobenius endomorphism of $\bG_a$.
For any group scheme $H$, the functor $\bfHom_{\gp}(\alpha_p,H)$
is represented by the fiber at $0$ of the $p$th power map of $\Lie(H)$,
as follows from \cite[Exp.~VII, Thm.~7.2]{SGA3} or \cite[II.7.4.2]{DG}. 
For example, $\bfHom_{\gp}(\alpha_p,\bG_a)$ is represented 
by the affine line.
\end{example}

\subsection{Homomorphisms from tori}
\label{subsec:tori}

The following result is a version of Gro\-thendieck's representability
theorem stated in the introduction (see \cite[Exp.~XI, Thm.~4.2]{SGA3}):

\begin{proposition}\label{prop:tori}
Let $G$ be a torus, and $H$ a group scheme. Then the functor 
$\bfHom_{\gp}(G,H)$ is represented by a scheme satisfying (AFT).
\end{proposition}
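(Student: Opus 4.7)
The plan is to reduce Proposition~\ref{prop:tori} to Grothendieck's classical representability theorem \cite[Exp.~XI, Thm.~4.2, Cor.~5.1]{SGA3} for multiplicative-type groups mapping to smooth affine targets, via two reductions on the shape of $H$.

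First, since $G$ is a torus and hence connected, Corollary~\ref{cor:con} yields a canonical isomorphism $\bfHom_{\gp}(G,H) \cong \bfHom_{\gp}(G,H^0)$, so we may assume that $H$ is a connected algebraic group. Next, mirroring the argument in Proposition~\ref{prop:finite}(iii), I would invoke the Chevalley--Raynaud decomposition to obtain an exact sequence $1 \to N \to H \xrightarrow{q} A \to 1$ with $N$ affine and $A$ an abelian variety. For any scheme $S$ and any $f \in \bfHom_{\gp}(G,H)(S)$, I would show that the composite $q_S \circ f\colon G_S \to A_S$ is trivial: its preimage of the unit section is closed in $G_S$ (as $e_{A_S}$ is a closed immersion, $A_S \to S$ being separated), and has full geometric fibers (since a homomorphism from a torus over a field to an abelian variety is trivial---the image is both affine and proper, hence a point, and sends $e$ to $e$); flatness of $G_S \to S$ then forces this closed subscheme to equal all of $G_S$. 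Combined with Lemma~\ref{lem:lex}, this gives $\bfHom_{\gp}(G,H) = \bfHom_{\gp}(G,N)$, reducing us to the affine case.

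For the affine case, I would choose a faithful representation to embed $N$ as a closed subgroup scheme of some $\GL_n$. Lemma~\ref{lem:closed} then makes $\bfHom_{\gp}(G,N)$ a closed subfunctor of $\bfHom_{\gp}(G,\GL_n)$, which is represented by a smooth scheme $M'$ by Grothendieck's theorem. Each conjugation orbit of $\GL_n$ on $M'$ is open and isomorphic to $\GL_n/Z$, where $Z$ is the centralizer in $\GL_n$ of a multiplicative-type subgroup; since such a centralizer is a Levi subgroup and hence reductive, Matsushima's theorem makes the orbit affine of finite type. Consequently $M'$ satisfies (AFT), and the closed subfunctor $\bfHom_{\gp}(G,N)$ is then represented by a closed subscheme of $M'$, which inherits (AFT) by Lemma~\ref{lem:ft}(vi). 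The main obstacle is verifying the (AFT) property of the classical Grothendieck scheme $M'$: this rests on combining the open-orbit structure from \cite[Exp.~XI]{SGA3} with the affineness of quotients of reductive groups by reductive subgroups, and on checking that closed subschemes of (AFT) schemes remain (AFT) via Lemma~\ref{lem:ft}.
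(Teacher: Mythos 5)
Your overall skeleton --- reduce to $H$ connected via Corollary \ref{cor:con}, kill the abelian-variety quotient using the Chevalley--Raynaud sequence and Lemma \ref{lem:lex}, then pass to $\GL_n$ --- matches the paper's. But the step where you show that $q_S \circ f : G_S \to A_S$ is trivial has a genuine gap. You argue that its kernel is a closed subscheme of $G_S$ with full geometric fibers, and that flatness of $G_S \to S$ then forces it to be all of $G_S$. That implication is false: take $S = \Spec(k[\varepsilon])$ and the closed subscheme $V(\varepsilon x)$ of $\bA^1_S$ (the kernel of the homomorphism $\bG_{a,S} \to \bG_{a,S}$, $x \mapsto \varepsilon x$); it has full fiber over the unique point of $S$, and $\bA^1_S \to S$ is even smooth, yet $V(\varepsilon x) \neq \bA^1_S$. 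Fiberwise equality only controls the ideal modulo each $\fm_s$, which says nothing over a non-reduced base --- and the functor must be evaluated on arbitrary $S$. What is actually needed is a schematically dense family of subschemes of $G \times S$ on which $f$ is forced into the unit section. The paper produces one from the prime-to-$p$ torsion: for $n$ prime to $p$, $G[n]$ and $A[n]$ are finite constant (after base change to $\bar k$), so $s \mapsto q(f(g,s))$ is locally constant on $S$ and hence equal to its value at a chosen point $s_0$; since the family $(G[n] \times S)_n$ is schematically dense in $G \times S$ by \cite[IV.11.10.6]{EGA}, one gets $q(f(g,s)) = q(f(g,s_0))$ identically, and the right-hand side vanishes because $\Hom_{\gp}$ from a torus to an abelian variety over a field is trivial. (Note that the rigidity Lemma \ref{lem:rig} cannot be used instead, since $\cO(G) \neq k$ for a torus.) Your step needs to be replaced by an argument of this kind.

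For the affine case your route genuinely differs from the paper's and is essentially viable, at the cost of heavier inputs: you invoke Grothendieck's smoothness of the conjugation map together with Matsushima's criterion to see that each orbit is open and affine, whereas the paper splits $G$ over a finite Galois extension (descending (AFT) by Lemma \ref{lem:ft}(v)) and then writes $\bfHom_{\gp}(\bG_m^r,\GL_n)$ explicitly, via the weight decomposition of $\cO_S^n$, as a disjoint union of constant schemes times quotients $\GL_n/(\GL_{n_1} \times \cdots \times \GL_{n_s})$; this exhibits (AFT) directly. If you keep your version, you should likewise split the torus first (or add a Galois-descent step): over a non-closed field it is not immediate that the connected components of $M'$ are single open orbits, which is what your affineness argument requires.
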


\begin{proof}
We first consider the case where $H$ is an abelian variety.
We claim that for any scheme $S$, every homomorphism
$f : G \times S \to H$ is constant.

To show this, we adapt a classical rigidity argument. 
By descent, we may assume that $k$ is algebraically closed.
Also, we may assume that $S$ is connected. Choose
a positive integer $n$ prime to $p$. Then the $n$-torsion
subgroups $G[n]$, $H[n]$ are finite and constant.
For any $g \in G[n](k)$,  the morphism
$S \to H[n]$, $s \mapsto f(g,s)$ is constant. Choose
$s_0 \in S(k)$, then we have $f(g,s) = f(g,s_0)$ identically
on $G[n] \times S$. Since the family of the $G[n]$ for $n$
as above is schematically dense in $G$, the family of the
$G[n] \times S$ is schematically dense in $G \times S$
(see \cite[IV.11.10.6]{EGA}). Thus, $f(g,s) = f(g,s_0)$
identically on $G \times S$. Also, the morphism $G \to H$, 
$g \mapsto f(g,s_0)$ is a homomorphism; it follows that 
$f(g,s_0) = e_H$ identically on $G$, since $G$ is 
a torus, and $H$ an abelian variety. This yields the claim.

We now consider the general case. By Corollary \ref{cor:con},
we may assume that $H$ is a connected algebraic group. 
Thus, $H$ lies in an exact sequence of the form 
(\ref{eqn:chevalley}). Using the above claim together
with Lemma \ref{lem:lex}, we may further assume that $H$
is affine. Then $H$ is isomorphic to a closed subgroup scheme
of some general linear group $\GL_n$. In view of Lemmas 
\ref{lem:closed}, \ref{lem:gp} and \ref{lem:ft}, we may thus reduce 
to the case where $H = \GL_n$. Also, since the torus $G$ is split 
by a finite Galois extension of $k$, we may assume that 
$G \simeq \bG_m^r$ by using Lemma \ref{lem:ft} again. 
For any homomorphism $f : G \times S \to \GL_n$, 
the $\cO_S$-module $\cO_S^n$ is a $G$-module via $(f, \id)$, 
and hence is the direct sum of its weight submodules (see 
\cite[Exp.~I, Prop.~4.7.3]{SGA3}). It follows that 
$\bfHom_{\gp}(G,H)$ is represented by the scheme
\[  \coprod_{(n_1, \ldots,n_s)} S_s \times
\GL_n/\GL_{n_1} \times \cdots \times \GL_{n_s}, \]
where $(n_1,\ldots,n_s)$ runs over the tuples of positive integers 
with sum $n$, and $S_s \subset (\bZ^r)^s$ denotes the set of 
$s$-tuples of pairwise distinct weights (viewed as a constant scheme). 
Since every homogeneous space 
$\GL_n/\GL_{n_1} \times \cdots \times \GL_{n_s}$ 
is affine of finite type, this completes the proof.
\end{proof}

\begin{corollary}\label{cor:red}
Let $G$ be a reductive algebraic group, and $H$ a group scheme.
Then the functor $\bfHom_{\gp}(G,H)$ is represented by a scheme 
$M$ satisfying (FT). If $H$ is affine, then $M$ satisfies (AFT).
\end{corollary}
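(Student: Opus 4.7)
My proof plan proceeds by a sequence of reductions, culminating in an appeal to Demazure's representability theorem.

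\textbf{Step 1: Reduce to $H$ connected.} A reductive algebraic group is connected by our convention, so Corollary \ref{cor:con} gives an isomorphism $\bfHom_{\gp}(G,H^0)\xrightarrow{\sim}\bfHom_{\gp}(G,H)$. Thus we may assume $H$ is connected. Note that $H$ then satisfies (FT) automatically, and (AFT) if and only if $H$ is affine (Lemma \ref{lem:ft}(i)); closed subfunctor stability under (AFT) from Lemma \ref{lem:ft}(vi) will be used repeatedly below.

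\textbf{Step 2: Reduce to $H$ affine.} Apply the Chevalley exact sequence (\ref{eqn:chevalley}): $1\to N\to H\xrightarrow{q} A\to 1$ with $N$ affine and $A$ an abelian variety. I claim that $\bfHom_{\gp}(G,A)$ is the trivial functor, i.e.\ is represented by $\Spec(k)$. Indeed, since $A$ is commutative, any homomorphism $f:G\times S\to A$ factors through the abelianization of $G\times S$; and because $G$ is reductive, $G/[G,G]$ is a torus, so the factorization $f:T_S\to A_S$ is a homomorphism from a torus to an abelian variety. The rigidity argument used in the proof of Proposition \ref{prop:tori} applies verbatim to show this factorization is trivial. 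Now Lemma \ref{lem:lex} gives an injection $\bfHom_{\gp}(G,N)\hookrightarrow\bfHom_{\gp}(G,H)$, and the triviality of $q_*$ yields surjectivity as well. Hence $\bfHom_{\gp}(G,N)\xrightarrow{\sim}\bfHom_{\gp}(G,H)$, so we may replace $H$ by the affine group $N$.

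\textbf{Step 3: Affine case.} When $H$ is affine, embed $H$ as a closed subgroup of some $\GL_n$. By Lemmas \ref{lem:closed} and \ref{lem:gp}, $\bfHom_{\gp}(G,H)$ is a closed subfunctor of $\bfHom_{\gp}(G,\GL_n)$, so it suffices to establish the claim for $H=\GL_n$. Representability of $\bfHom_{\gp}(G,\GL_n)$ by a scheme whose connected components are affine of finite type is Demazure's theorem \cite[Exp.~XXIV, Cor.~7.2.3]{SGA3}, applied over the base $\Spec(k)$. By Lemma \ref{lem:ft}(vi), the closed subfunctor $\bfHom_{\gp}(G,H)$ is then represented by a scheme satisfying (AFT).

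\textbf{Conclusion on (FT).} In the general case (Step 2), $\bfHom_{\gp}(G,H)\simeq\bfHom_{\gp}(G,N)$ where $N$ is the affine kernel. The affine case gives an (AFT), hence (FT), representing scheme; thus $M$ satisfies (FT), and satisfies (AFT) when $H$ is already affine (so that no passage through Chevalley's sequence is needed). The main obstacle is Step 3, which I expect to dispatch by a black-box citation to Demazure rather than by a direct construction; an alternative, self-contained route (generalizing the proof of Proposition \ref{prop:tori}) would require parameterizing representations of $G$ by their isotypic decomposition, which is clean in characteristic $0$ but more subtle in positive characteristic and would essentially reprove Demazure's result.
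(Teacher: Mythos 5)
Your Steps 1 and 2 are sound. The reduction to affine $H$ via the Chevalley sequence (\ref{eqn:chevalley}) — using that any homomorphism $G_S\to A_S$ kills the derived group and hence factors through the coradical torus $G/\rD(G)$, where the rigidity claim from the proof of Proposition \ref{prop:tori} applies — is a legitimate variant; the paper performs exactly this reduction inside the proof of Proposition \ref{prop:tori}, while for Corollary \ref{cor:red} itself it keeps $H$ merely connected. The genuine gap is in Step 3. SGA3, Exp.~XXIV, Cor.~7.2.3 asserts that $\bfHom_{\gp}(G,\GL_n)$ is representable by a separated scheme, locally of finite presentation; it does \emph{not} assert that the connected components of that scheme are affine of finite type, nor even of finite type. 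Bare representability can never give this: the paper emphasizes in the introduction that these representing schemes are typically not of finite type, and local finite presentation only makes the components open, not quasi-compact. So your black-box citation delivers the existence of $M$ but not the properties (FT) and (AFT), which are the actual content of the corollary beyond representability.

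To close the gap you need the supplementary argument the paper gives. Choose a maximal torus $T\subset G$ (split after a finite Galois extension, which is harmless by Lemma \ref{lem:ft}(v)). Proposition \ref{prop:tori} shows that $\bfHom_{\gp}(T,H)$ satisfies (AFT), via the explicit description as a disjoint union of constant schemes times homogeneous spaces $\GL_n/\GL_{n_1}\times\cdots\times\GL_{n_s}$. Then SGA3, Exp.~XXIV, Cor.~7.1.9 shows that the restriction morphism $u:\bfHom_{\gp}(G,H)\to\bfHom_{\gp}(T,H)$ enjoys every property of morphisms that is stable under composition and base change and holds for closed immersions and for $\pi_H$; taking this property to be ``of finite type'' (resp.~``affine of finite type'') shows $u$ is such a morphism, and pulling back the (AFT) decomposition of the target along $u$ yields (FT) (resp.~(AFT)) for the source. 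Without some relative finiteness statement of this kind over a functor already known to satisfy (AFT), the finiteness claims of the corollary remain unproved; note that $u$ is not a monomorphism (distinct representations of $G$ can agree on $T$), so there is no shortcut identifying $M$ with a subscheme of $\bfHom_{\gp}(T,H)$ either.
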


\begin{proof}
By Corollary \ref{cor:con}, we may assume that $H$ is a connected
algebraic group.
Also, we may choose a maximal torus $T$ of $G$. Then 
$\bfHom_{\gp}(T,H)$ is representable by Proposition \ref{prop:tori};
moreover, the morphism of functors
\[ u : \bfHom_{\gp}(G,H) \longrightarrow \bfHom_{\gp}(T,H) \]
is relatively representable by a morphism locally of finite 
presentation, as a consequence of \cite[Exp.~XXIV, Prop.~7.2.1]{SGA3}. 
Thus, $\bfHom_{\gp}(G,H)$ is representable by a scheme $M$ 
(locally of finite type).

To show that $M$ satisfies (FT), recall that $T_K$ is split for some
finite Galois field extension $K/k$. Using Galois descent and
Lemma \ref{lem:ft} (v), we may therefore assume that $T$ is split. 
We now use \cite[Exp.~XXIV, Cor.~7.1.9]{SGA3},
which asserts that the above morphism $u$ satisfies every property 
$\cP$ of morphisms which is stable by composition and base change, 
and holds for closed immersions and for the structural morphism $\pi_H$. 
In view of the assumption on $H$ and \cite[I.Prop.~6.3.4]{EGA}, 
we may take for $\cP$ the property of being of finite type. 
Also, $\bfHom_{\gp}(T,H)$ satisfies (FT) by Proposition \ref{prop:tori} 
again. It follows readily that $\bfHom_{\gp}(G,H)$ satisfies (FT). 
The proof for (AFT) is obtained similarly by taking for $\cP$ 
the property of being affine of finite type.
\end{proof}

\subsection{Morphisms from abelian varieties}
\label{subsec:abelian}

The following result is a consequence of the rigidity lemma
proved in the next section (Lemma \ref{lem:rig}). We provide
a short and direct proof.

\begin{proposition}\label{prop:abelian}

Let $G$ be an abelian variety, and $H$ a group scheme.

\begin{enumerate}

\item[{\rm (i)}] $H$ has a largest abelian subvariety $H_{\ab}$. 

\item[{\rm (ii)}] We have 
\[ \bfHom_{\gp}(G,H_{\ab}) =
\bfHom(G,H_{\ab}; e_G \mapsto e_H) =
\bfHom(G,H; e_G \mapsto e_H)  \]
and this functor is represented by a commutative \'etale 
group scheme.

\end{enumerate}

\end{proposition}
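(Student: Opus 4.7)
My plan is: establish (i) by constructing $H_{\ab}$ via a maximality argument; for (ii), reduce both equalities to a parametrized rigidity statement and prove it using the key fact $\cO(G)=k$; then deduce representability from the classical structure of $\Hom$ between abelian varieties.

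For (i), I would work over $\bar k$ and observe that the collection of abelian subvarieties of $H_{\bar k}$ is closed under ``sums'': given abelian subvarieties $A,B\subseteq H_{\bar k}$, the scheme-theoretic image of the multiplication $A\times B\to H_{\bar k}$ is closed in $H_{\bar k}$ (by properness of $A\times B$) and is a quotient of the abelian variety $A\times B$ by a closed subgroup scheme, hence itself an abelian subvariety. A dimension bound produces a unique maximal abelian subvariety of $H_{\bar k}$; by uniqueness it is $\Gal(\bar k/k)$-stable and descends to the required $H_{\ab}\subseteq H$.

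For (ii), Lemmas \ref{lem:closed} and \ref{lem:gp} together with the cartesian square defining pointed morphisms give closed-subfunctor inclusions
\[ \bfHom_{\gp}(G,H_{\ab})\subseteq\bfHom(G,H_{\ab};e_G\mapsto e_H)\subseteq\bfHom(G,H;e_G\mapsto e_H), \]
so both equalities reduce to the following parametrized rigidity: for any $k$-scheme $S$, any morphism $f:G\times S\to H$ with $f(e_G,s)=e_H$ identically is an $S$-homomorphism whose image lies in $H_{\ab}\times S$. To prove $f$ is an $S$-homomorphism, set $\phi(g_1,g_2,s):=f(g_1g_2,s)f(g_1,s)^{-1}f(g_2,s)^{-1}$, so $\phi$ vanishes on $\{e_G\}\times G\times S$ and on $G\times\{e_G\}\times S$. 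The classical rigidity lemma applied fiberwise gives $\phi_{\bar s}\equiv e_H$ for every geometric point $\bar s$ of $S$; hence the set-theoretic image of $\phi$ is $\{e_H\}$ and $\phi$ factors through any affine open $U\subseteq H$ around $e_H$. Since $\cO(G\times G)=k$, proper flat base change yields $\cO((G\times G)\times S)=\cO(S)$, so $\phi:(G\times G)\times S\to U$ corresponds to an algebra map $\cO(U)\to\cO(S)$ and factors as $\phi_0\circ\pi_S$ for some $\phi_0:S\to U$. Pulling back by the section $s\mapsto(e_G,e_G,s)$ gives $\phi_0=e_H$, so $\phi\equiv e_H$. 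The factorization of $f$ through $H_{\ab}\times S$ follows by the same device applied to the composition $G\times S\xrightarrow{f}H\to H/H_{\ab}$ (formed as an fppf sheaf, using that $H_{\ab}$ is characteristic, hence normal): this composition is an $S$-homomorphism, fiberwise trivial, and therefore identically trivial by the argument just given.

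For representability, the classical structure theorem for abelian varieties gives that $\Hom_{\bar k\text{-}\gp}(G_{\bar k},H_{\ab,\bar k})$ is a finitely generated free abelian group. Parametrized rigidity implies that an $S$-homomorphism over a connected $S$ is determined by its restriction to a single geometric fiber; hence $\bfHom_{\gp}(G,H_{\ab})$ is an \'etale-locally constant sheaf with finitely generated free abelian stalks, represented by a commutative \'etale group scheme via Galois descent (commutativity being inherited from $H_{\ab}$). The main obstacle is upgrading fiberwise rigidity to scheme-theoretic rigidity for possibly non-reduced $S$; it is resolved above by the anti-affinity $\cO(G)=k$, which forces morphisms from $(G\times G)\times S$ into affine targets to be constant along fibers over $S$ — the geometric input that distinguishes abelian varieties here.
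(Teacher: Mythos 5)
Your cocycle argument showing that every pointed morphism $f : G \times S \to H$ is an $S$-homomorphism is correct and nicely self-contained: since classical rigidity gives set-theoretic triviality of $\phi$ on \emph{every} geometric fiber of $S$, the passage to scheme-theoretic triviality via $\cO((G\times G)\times S)=\cO(S)$ works. But two later steps have real gaps. First, in (i) you treat the multiplication map $A \times B \to H_{\bar k}$ as a homomorphism (``a quotient of the abelian variety $A\times B$ by a closed subgroup scheme''); this requires $A$ and $B$ to commute elementwise, which is true but must be justified --- the paper gets it from the observation that every abelian subvariety is anti-affine, hence contained in the commutative group $H_{\ant}$. Much more seriously, in (ii) the claim that $G\times S \to H \to H/H_{\ab}$ is ``fiberwise trivial'' says that for every geometric point $\bar s$ of $S$ the image of $f_{\bar s}$, an abelian subvariety of $H_{\kappa(\bar s)}$, lies in $(H_{\ab})_{\kappa(\bar s)}$. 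This is exactly the statement that the formation of $H_{\ab}$ commutes with arbitrary field extensions, a nontrivial theorem (\cite[Thm.~7.2]{LS}) which the paper explicitly remarks it does \emph{not} need. The paper avoids it by first representing $\bfHom(G,H;e_G\mapsto e_H)$ by an \'etale scheme (Grothendieck's Hom scheme for $G$ projective and $H$ quasi-projective, plus the tangent-space computation $T_f \simeq \Hom(G,\Lie(H); e_G\mapsto 0)=0$); equality of clopen subschemes of an \'etale scheme then only has to be tested on points defined over finite separable extensions of $k$, where Galois descent of $H_{\ab}$ is elementary.

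The representability step has a gap of the same nature. You assert that ``parametrized rigidity implies that an $S$-homomorphism over a connected $S$ is determined by its restriction to a single geometric fiber,'' but your device does not prove this: for $\psi(g,s)=f(g,s)f(g,s_0)^{-1}$ you only know vanishing on $G\times\{s_0\}$ and on $\{e_G\}\times S$, not set-theoretic vanishing on every fiber $G\times\{s\}$ (a pointwise quotient of two homomorphisms into a noncommutative $H$ has no reason to be constant on a fiber). Upgrading vanishing on one fiber to vanishing on a neighborhood is precisely the content of the rigidity lemma (Lemma \ref{lem:rig}), whose proof needs the infinitesimal neighborhoods $Y_n$ and schematic density, not just global sections; alternatively one uses the Hom-scheme-plus-tangent-space route as the paper does. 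So as written, both the second functor equality and the \'etale representability rest on unproved inputs.
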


\begin{proof}
(i) Clearly, we may assume that $H$ is connected, 
and hence an algebraic group. Let $A$, $B$ be abelian 
subvarieties of $H$, and assume that $A$ has maximal
dimension among all such subvarieties. Then $A$, $B$
are both contained in $H_{\ant}$, and hence centralize
each other (since every anti-affine group is commutative). 
Thus, the morphism $A \times B \to H$, 
$(a,b) \mapsto a b$ is a homomorphism, and its image
is an abelian subvariety $C$ of $H$. By maximality,
we have $C = A$, and hence $B \subset A$.

(ii) By Corollary \ref{cor:con}, we may again assume that 
$H$ is a connected algebraic group. Then the scheme 
$H$ is quasi-projective (see \cite[Cor.~VI.2.6]{Raynaud}); also,
$G$ is projective. As a consequence, the functor 
$\bfHom(G,H)$ is represented by a quasi-projective scheme
$M$ (see \cite[p.~268]{Grothendieck}). Thus, the closed subfunctor
$\bfHom(G,H; e_G \mapsto e_H)$ is represented by a 
closed subscheme $N$ of $M$. Moreover, the formation
of $N$ commutes with base change by field extensions.

We now show that $N$ is \'etale. For this, we may assume that 
$k$ is algebraically closed in view of \cite[IV.17.7.3]{EGA}.
Since $N$ is locally of finite type, it suffices to show
that $T_f(N) = 0$ for any $f \in N(k)$. But this follows 
from the isomorphism (\ref{eqn:j}), since every morphism 
$G \to \Lie(H)$ is constant.

We have a chain of closed subfunctors
\[ \bfHom_{\gp}(G,H_{\ab}) \subset 
\bfHom(G,H_{\ab}; e_G \mapsto e_H) \subset 
\bfHom(G,H;e_G \mapsto e_H). 
\]
Moreover, the resulting inclusions of sets of $K$-points
are equalities for any algebraically closed field $K$,
in view of (i) and \cite[\S 4, Cor.~1]{Mumford}. 
Since $\bfHom(G,H; e_G \mapsto e_H)$ is represented
by an \'etale scheme, these inclusions of subfunctors are
equalities. As $\bfHom_{\gp}(G,H_{\ab})$ is a commutative 
group functor, this yields the assertion. 
\end{proof}

It is shown in \cite[Thm.~7.2]{LS} that the formation of 
$H_{\ab}$ commutes with base change by field extensions; 
we will not need this fact.

\section{Proof of Theorem \ref{thm:rep}}
\label{sec:proofrep}

We begin with an easy observation:

\begin{lemma}\label{lem:nonrig}
Let $G$ be a group scheme.

\begin{enumerate}

\item[{\rm (i)}] If $\bfHom(G,H)$ is representable for some non-\'etale
group scheme $H$, then the vector space $\cO(G)$ is finite-dimensional.

\item[{\rm (ii)}] If 
$\bfHom(G,\bG_a; e_G \mapsto 0) = \bfHom_{\gp}(G,\bG_a)$,
then $G$ is anti-affine.

\end{enumerate}

\end{lemma}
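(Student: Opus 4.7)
The plan is to treat the two parts separately.

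For part (i), I would use Lemma \ref{lem:txh} to identify the relevant tangent spaces, together with the fact (recalled in the introduction) that if $\bfHom(G,H)$ is representable then the representing scheme is automatically locally of finite type. Concretely: assume $\bfHom(G,H)$ is represented by a scheme $N$; then $N$ is locally of finite type, so the tangent space $T_f N$ at any $k$-rational point $f \in N(k)$ is finite-dimensional over $k$. I take $f$ to be the constant morphism $G \to H$ with value $e_H$; Lemma \ref{lem:txh} gives $T_f N \simeq \cO(G) \otimes_k \Lie(H)$. Since $H$ is not \'etale one has $\Lie(H) \neq 0$ (otherwise the local ring $\cO_{H,e_H}$ has vanishing maximal ideal, forcing $H^0 = \Spec k$ and hence $H$ \'etale). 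Fixing a non-zero $v \in \Lie(H)$, the map $\phi \mapsto \phi \otimes v$ embeds $\cO(G)$ into the finite-dimensional space $T_f N$, forcing $\dim_k \cO(G) < \infty$.

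For part (ii), I would reformulate the hypothesis as a purely algebraic condition on the Hopf algebra $\cO(G) = \cO(G^{\aff})$. Let $\fm_e \subset \cO(G)$ denote the augmentation ideal (functions vanishing at $e_G$), and $P \subset \cO(G)$ the subspace of primitive elements (those $\phi$ with $\Delta \phi = \phi \otimes 1 + 1 \otimes \phi$). Using that $\cO(G \times S) = \cO(G) \otimes_k \cO(S)$ for affine $S$ (flat base change), together with the fact that primitivity is an $\cO(S)$-linear condition, the hypothesis translates into the identity $\fm_e = P$ of subspaces of $\cO(G)$. Now for any $\phi, \psi \in \fm_e = P$, the product $\phi\psi$ again lies in $\fm_e$ and so must also be primitive; expanding
\[
\Delta(\phi\psi) = \Delta(\phi)\,\Delta(\psi) = \phi\psi \otimes 1 + 1 \otimes \phi\psi + \phi \otimes \psi + \psi \otimes \phi,
\]
primitivity of $\phi\psi$ forces $\phi \otimes \psi + \psi \otimes \phi = 0$ in $\cO(G) \otimes \cO(G)$. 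Setting $\psi = \phi$ yields $2\,\phi \otimes \phi = 0$, and since $\phi \otimes \phi \neq 0$ whenever $\phi \neq 0$, this gives $\phi = 0$ for every $\phi \in \fm_e$ as soon as $\charac(k) \neq 2$. Hence $\fm_e = 0$, i.e.\ $\cO(G) = k$, which is the anti-affineness of $G$.

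Part (i) is essentially bookkeeping once Lemma \ref{lem:txh} is in hand. The delicate point in (ii), and the main obstacle I expect, is characteristic two: there the pairwise relation becomes $\phi \otimes \psi = \psi \otimes \phi$, which only shows that $\fm_e$ is at most one-dimensional over $k$ but does not force it to vanish. To close the argument in this case I would test the functor equality against richer parameter schemes—for instance, taking $S = G$ with $f(g,h) = \phi(gh) - \phi(h)$, or playing Frobenius twists off against the coalgebra structure to bring in the higher-order identities $\Delta(\phi^n)$—in order to extract the additional relations needed to push $\fm_e$ all the way to zero.
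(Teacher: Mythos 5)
Part (i) of your proposal is correct and is exactly the paper's argument: one evaluates the tangent space of the representing scheme at the constant morphism with value $e_H$ via Lemma \ref{lem:txh}, notes $\Lie(H)\neq 0$ because $H$ is not \'etale, and concludes from finite-dimensionality of tangent spaces at rational points of a scheme locally of finite type. For part (ii) away from characteristic $2$, your computation of $\Delta(\phi\psi)$ specialised to $\phi=\psi$ is the paper's computation with $f^2$ written in Hopf-algebra language: the identity $f^2(g_1g_2)=f^2(g_1)+f^2(g_2)$ combined with $f(g_1g_2)=f(g_1)+f(g_2)$ gives $2\,f\otimes f=0$ in $\cO(G)\otimes\cO(G)$, hence $f=0$ when $p\neq 2$. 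So up to this point you match the paper.

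The characteristic $2$ obstacle you flag is genuine, and you should not try to close it: the statement of (ii) is in fact false when $p=2$, so no choice of richer parameter scheme or Frobenius trick will push $\fm_e$ to zero. Concretely, $G=\alpha_2$ has $\cO(G)=k[t]/(t^2)$ with augmentation ideal $kt$ and $t$ primitive, so every pointed morphism $G\times S\to\bG_a$ is a homomorphism (this is visible in Example \ref{ex:alpha}), yet $G$ is not anti-affine; the constant group scheme $(\bZ/2\bZ)_k$ is another counterexample, since $\Delta(\delta_g)=\delta_e\otimes\delta_g+\delta_g\otimes\delta_e$ equals $\delta_g\otimes 1+1\otimes\delta_g$ once $2=0$. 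The paper's own proof handles $p=2$ by the phrase ``consider $f^3$ and argue similarly'', but the relation extracted from the primitivity of $f^3$ is $3(f^2\otimes f+f\otimes f^2)=0$, i.e.\ $f^2\otimes f=f\otimes f^2$, and both counterexamples satisfy it ($f^2=0$ for $\alpha_2$, and $f^2=\lambda f$ for $(\bZ/2\bZ)_k$); so the published argument has the same gap as yours. What does survive in characteristic $2$ is precisely your observation that $\fm_e$ is at most one-dimensional, which pins $G^{\aff}$ down to the trivial group, $\alpha_2$ or $(\bZ/2\bZ)_k$; the lemma should therefore be read with the hypothesis $p\neq 2$ or with this weaker conclusion. (Only part (i) is invoked in the proof of Theorem \ref{thm:rep}, so the main results are unaffected.)
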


\begin{proof}
(i) Consider the constant morphism $f: G \to H$ with image
$e_H$. Then $T_f \bfHom(G,H) \simeq \cO(G) \otimes \Lie(H)$
by Lemma \ref{lem:txh}; also, $\Lie(H) \neq 0$ as $H$ is not
\'etale. If $\bfHom(G,H)$ is representable, then its tangent 
space at $f$ is finite-dimensional, hence the assertion.

(ii) Every $f \in \cO(G)$ such that $f(e_G) = 0$
satisfies $f(g_1 g_2) = f(g_1) + f(g_2)$ identically on $G \times G$.
Applying this to $f^2$, we obtain that $2 f(g_1) f(g_2) = 0$ 
identically. If $p \neq 2$, it follows that $f = 0$; thus, $\cO(G) = k$. 
If $p = 2$ we consider $f^3$ and argue similarly.
\end{proof}

Next, we obtain a key rigidity result:

\begin{lemma}\label{lem:rig}
Let $X$ be a geometrically reduced scheme of finite type 
such that $\cO(X) = k$. Let $Y$ be a geometrically connected 
scheme, and $f : X \times Y \to Z$ a morphism of schemes. 
Assume that there exist  $x_0 \in X(\bar{k})$ and 
$y_0 \in Y(\bar{k})$ such that $f(x,y_0) = f(x_0,y_0)$ for all 
$x \in X(\bar{k})$. Then $f$ factors through the projection
$\pr_Y: X \times Y \to Y$.
\end{lemma}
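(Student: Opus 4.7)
The plan is to translate the problem into a statement about the Hom functor and to establish it by an infinitesimal-rigidity argument whose real content is a flat base change identity.

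First I would reduce to $k = \bar{k}$: all hypotheses are geometric, and the conclusion ``$f = g \circ \pr_Y$'' is equivalent to the equality $f \circ p_1 = f \circ p_2 : X \times X \times Y \to Z$ (with $p_i : X \times X \times Y \to X \times Y$ projecting off the non-$i$-th $X$-factor), whose equalizer is a closed subscheme of $X \times X \times Y$ that descends under faithfully flat base change. Next, using the adjunction $\bfHom(X \times Y, Z) = \bfHom(Y, \bfHom(X,Z))$, I would view $f$ as a morphism of functors $\phi : Y \to \bfHom(X,Z)$; by Corollary~\ref{cor:constant}, the constant morphisms give a closed subfunctor $Z \hookrightarrow \bfHom(X,Z)$, and the task becomes to show $\phi$ factors through this subfunctor. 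Let $W = \phi^{-1}(Z) \subset Y$, a closed subscheme; the hypothesis on $y_0$ forces $y_0 \in W$, since a morphism from the geometrically reduced $X_{\bar{k}}$ to the separated $Z_{\bar{k}}$ that is constant on $\bar{k}$-points is a constant morphism.

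Since $Y$ is (geometrically) connected and $W$ is closed, it suffices to show $W$ is open in $Y$. I would achieve this by proving the stronger statement that $Z \hookrightarrow \bfHom(X,Z)$ is formally \'etale as a morphism of functors; by stability of formal \'etaleness under pullback along $\phi$, $W \hookrightarrow Y$ is then formally \'etale, and a formally \'etale closed immersion into a locally Noetherian scheme is automatically an open-and-closed immersion, so connectedness of $Y$ yields $W = Y$.

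The formal \'etaleness check unwinds to this: for $A$ Artinian local over $k$ with a square-zero ideal $J$, any $g_0 : \Spec A/J \to Z$ and any $f_A : X_A \to Z$ with $f_A|_{X_{A/J}} = g_0 \circ \pi$ (where $\pi = \pr_{\Spec A/J}$) come from a unique $g : \Spec A \to Z$ satisfying $g \circ \pi = f_A$ and $g|_{A/J} = g_0$. Uniqueness is immediate since $\pi$ is an epimorphism. For existence I would compare two torsors from deformation theory: the $g$'s extending $g_0$ form a torsor under $\Hom_{\cO_{\Spec A/J}}(g_0^* \Omega^1_Z, J)$, while the $f_A$'s extending $g_0 \circ \pi$ form a torsor under $\Hom_{\cO_{X_{A/J}}}(\pi^* g_0^* \Omega^1_Z, \pi^* J) \simeq \Hom_{\cO_{\Spec A/J}}(g_0^* \Omega^1_Z, \pi_* \pi^* J)$ by adjunction. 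The main obstacle, and the precise point where the hypothesis $\cO(X) = k$ enters, is the identification $\pi_* \pi^* J = J$, which I would derive from $\pi_* \cO_{X_{A/J}} = \cO_{\Spec A/J}$ via the projection formula; this latter identity is the flat base change of $\cO(X) = k$, valid because $X$ is geometrically reduced of finite type and $\Spec A/J \to \Spec k$ is flat. Once this is in hand, the two torsors are identified via $\pi^*$, so every $f_A$ arises as $g \circ \pi$ and the proof concludes.
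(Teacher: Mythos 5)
Your overall strategy is sound and, once unwound, is essentially the paper's own argument in functorial dress: the reduction to $\bar{k}$, the observation that the locus $W \subset Y$ where $f(\cdot,y)$ is constant is closed and contains $y_0$, and the proof that $W$ is open by an infinitesimal lifting argument whose engine is the flat base change identity $\cO(X \times T) = \cO(T)$ for $T$ finite, followed by Krull's intersection theorem to pass from all Artinian thickenings to an actual open neighborhood. The paper does this with bare hands (infinitesimal neighborhoods $Y_n$ of $y$ and the isomorphism $\cO(Y_n) \stackrel{\sim}{\to} \cO(X \times Y_n)$ from \cite[I.2.2.6]{DG}); your packaging via formal \'etaleness of $Z \hookrightarrow \bfHom(X,Z)$ and its pullback $W \hookrightarrow Y$ is a clean way to organize the same computation, and your restriction to Artinian local test rings is harmless for a closed immersion into a locally Noetherian scheme, for exactly the Krull-type reason the paper invokes.

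There is, however, a genuine gap in your existence step. The set $E_1$ of morphisms $g : \Spec A \to Z$ extending $g_0$ and the set $E_2$ of morphisms $f_A : X_A \to Z$ extending $g_0 \circ \pi$ are only \emph{pseudo}-torsors under $\Hom(g_0^*\Omega^1_Z, J)$ and $\Hom(\pi^*g_0^*\Omega^1_Z,\pi^*J)$ respectively: since $Z$ is an arbitrary (possibly very singular) scheme, either set may a priori be empty, and the obstruction to nonemptiness does not live in these groups. Identifying the two acting groups via $\pi_*\pi^*J = J$ therefore does not yield surjectivity of $E_1 \to E_2$; you are given $E_2 \neq \emptyset$ (namely $f_A$) and need $E_1 \neq \emptyset$, which is precisely the existence statement you are trying to prove. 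As written, "the two torsors are identified, so every $f_A$ arises as $g \circ \pi$" is circular. The gap is fixable, but the clean fix bypasses the torsor comparison entirely: since $A$ is Artinian local and $f_A$ restricts to the constant map $g_0 \circ \pi$ on $X_{A/J}$, the underlying topological space of $X_A$ maps to the single point $z = g_0(\mathfrak{m})$, so $f_A$ factors through any affine open $U \ni z$ of $Z$; then $f_A$ is an algebra map $\cO(U) \to \cO(X_A) = \cO(X) \otimes_k A = A$, which \emph{is} the desired $g$, and $g \circ \pi = f_A$ because $\cO(X_A) = A$. This is exactly the mechanism of the paper's proof (there with the finite scheme $Z_n$ in place of $U$). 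Alternatively one can compare obstruction classes and check that $\Ext^1_{\cO_{T_0}}(g_0^*L_Z, J) \to \Ext^1_{\cO_{X \times T_0}}(\pi^*g_0^*L_Z, \pi^*J)$ is injective via the Leray spectral sequence and $\pi_*\pi^*J = J$, but that is heavier machinery than the situation requires.
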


\begin{proof}
By fpqc descent, it suffices to show that the base change
$f_{\bar{k}}$ factors through $(\pr_Y)_{\bar{k}}$. 
Thus, we may assume that $k$ is algebraically closed. 

Let $W$ be the pullback of $\diag(Z)$ under the morphism
\[ X \times Y \longrightarrow Z \times Z, \quad
(x,y) \longmapsto (f(x,y), f(x_0,y)). \]
Then the equalizer $W$ is a closed subscheme of $X \times Y$.

Consider the subset $Y'$ of $Y$ consisting of those $y$ such that
$X \times \{ y \} \subset W$ as sets; equivalently,
$X \times \{ y \} \subset W$ as schemes, since $X$ is reduced. 
We claim that $Y'$ is closed in $Y$. Indeed, $X \times Y' \subset W$ 
as sets, and hence $\overline{X \times Y'} \subset W$. Since the 
projection $X \times Y \to Y$ is open, we have 
$\overline{X \times Y'} = X \times \overline{Y'}$. 
Thus, $\overline{Y'} = Y'$, proving the claim.

By this claim and the connectedness of $Y$, it suffices to show 
that every $y \in Y'(k)$ admits an open neighborhood $U = U(y)$ 
such that $X \times U \subset W$ as schemes. 

Let $z = f(x_0,y)$; then $z \in Z(k)$, and $f$ induces a morphism
\[ f_n : X \times Y_n \longrightarrow Z_n \]
for any positive integer $n$, where $Y_n$ (resp.~$Z_n$) denotes 
the $n$th infinitesimal neighborhood of $y$ in $Y$ (resp.~of
$z$ in $Z$). Since $Z_n$ is finite, $f_n$ is given by an algebra 
homomorphism
\[ f_n^{\#} : \cO(Z_n) \longrightarrow \cO(X \times Y_n). \]
But we have 
$\cO(Y_n) \stackrel{\sim}{\longrightarrow} \cO(X \times Y_n)$
since $X$ is of finite type, $\cO(X) = k$ and $Y_n$ is finite
(see \cite[I.2.2.6]{DG}). Thus,
$f_n$ factors through $\pr_{Y_n} : X \times Y_n \to Y_n$. 
So $f(x,y) = f(x_0,y)$ identically on $X \times Y_n$, i.e.,
$X \times Y_n \subset W$. By Krull's intersection theorem,
the family $(Y_n)_{n\geq 1}$ is schematically dense in an open 
neighborhood $U$ of $y$ in $Y$. Thus, the family 
$(X \times Y_n)_{n \geq 1}$ is schematically dense in 
$X \times U$ in view of \cite[IV.11.10.6]{EGA}. Since $W$
is a closed subscheme of $X \times Y$, we obtain 
that $X \times U \subset W$ as desired.
\end{proof}

As mentioned in the introduction, the above result is a version of  
\cite[Thm.~1.7]{SS}. The proof presented there (and again in 
\cite[\S 4.3]{BSU} and \cite[Lem.~3.3.3]{Bri17}) requires some minor 
corrections, e.g., there is a confusion in the final step between 
density and schematic density.

We may now obtain the following result, a version of
\cite[Prop.~5.1.4]{BSU}:

\begin{proposition}\label{prop:rig}
Let $G$ be an anti-affine algebraic group, and $H$ a group scheme. 
Then $\bfHom_{\gp}(G,H) = \bfHom(G,H; e_G \mapsto e_H)$ and 
this functor is represented by a form of $\bZ^n_k$ for some
integer $n \geq 0$.
\end{proposition}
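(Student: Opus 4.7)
The plan is to prove the two claims in sequence: first the equality of functors via rigidity, then representability by descending $H$ to an abelian target.

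For the equality, given a scheme $S$ and $f \in \bfHom(G,H; e_G \mapsto e_H)(S)$, consider the defect
\[
\delta_f : G \times G \times S \to H, \qquad (g_1,g_2,s) \longmapsto f(g_1 g_2,s)\, f(g_1,s)^{-1}\, f(g_2,s)^{-1},
\]
so that ``$f$ is a homomorphism'' is exactly the condition $\delta_f = e_H$. Since being a homomorphism cuts out a closed subscheme of $S$ (Lemma \ref{lem:gp}), by faithfully flat descent and localization I may assume $k = \bar{k}$ and $S$ connected (hence geometrically connected). A direct computation shows $\delta_f$ vanishes on $G \times \{e_G\} \times S$ and on $\{e_G\} \times G \times S$. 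Applying Lemma \ref{lem:rig} with $X = G$ (geometrically reduced, of finite type, with $\cO(G) = k$), $Y = G \times S$ (geometrically connected), $x_0 = e_G$, and $y_0 = (e_G,s_0)$ forces $\delta_f$ to factor through $\pr_Y$; combined with its vanishing on $\{e_G\} \times G \times S$, this yields $\delta_f \equiv e_H$.

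For representability I peel off layers of $H$ until only an abelian variety remains. By Corollary \ref{cor:con} I may replace $H$ by $H^0$ and assume $H$ is connected algebraic. Flat base change applied to $\pi_G : G \to \Spec k$, together with $\cO(G) = k$, yields $\cO(G \times S) = \cO(S)$ for every $S$, so any pointed morphism from $G \times S$ to an affine scheme is constant equal to the marked point. Applied to the composite $G \times S \to H \to H^{\aff}$, this shows every such $f$ factors through $H_{\ant}$, so $\bfHom_{\gp}(G,H) = \bfHom_{\gp}(G,H_{\ant})$; thus I may further assume $H$ is smooth, connected, commutative, and anti-affine. The Chevalley sequence $1 \to L \to H \to A \to 1$ (with $L$ affine and $A$ an abelian variety), together with Lemma \ref{lem:lex} and the same vanishing applied to $L$, yields a closed immersion $\bfHom_{\gp}(G,H) \hookrightarrow \bfHom_{\gp}(G,A)$.

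It remains to analyze $\bfHom_{\gp}(G,A)$. The Albanese morphism $G \to A(G)$ is a surjective homomorphism onto an abelian variety with connected affine kernel $L(G)$; since any homomorphism of group schemes over $S$ from an affine to an abelian scheme is trivial, every $G \times S \to A$ factors uniquely through $A(G) \times S$, so $\bfHom_{\gp}(G,A) = \bfHom_{\gp}(A(G),A)$. Proposition \ref{prop:abelian} then represents the latter by a commutative \'etale group scheme $M$, and the classical fact that $\Hom(A(G)_{\bar{k}}, A_{\bar{k}})$ is a free abelian group of finite rank identifies $M(\bar{k})$ with $\bZ^n$ for some $n$, making $M$ a form of $\bZ^n_k$. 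Finally $\bfHom_{\gp}(G,H)$ is represented by the closed subgroup scheme of $M$ cut out above: it is \'etale with $\bar{k}$-points forming a subgroup of $\bZ^n$, hence a form of $\bZ^m_k$ for some $0 \le m \le n$. The two delicate points I expect to be the main obstacles are the reduction to a geometrically connected base in the rigidity argument and the invocation of the Albanese morphism for $G$ over a possibly imperfect $k$, for which one should work over $\bar{k}$ and descend via Galois.
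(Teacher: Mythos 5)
Your proof of the equality $\bfHom_{\gp}(G,H) = \bfHom(G,H;e_G \mapsto e_H)$ is correct and is essentially the paper's own argument. The representability half, however, contains two genuine gaps. First, the claim that ``any homomorphism of group schemes over $S$ from an affine to an abelian scheme is trivial'' is false over non-reduced bases: for $A$ an abelian variety in characteristic $0$ and $f$ the trivial homomorphism, Lemma \ref{lem:bz} gives $T_f\bfHom_{\gp}(\bG_a,A) = Z^1(\bG_a,\Lie(A)) = \Hom_{\gp}(\bG_a,\Lie(A)) \neq 0$, so there exist non-trivial homomorphisms $\bG_{a,D} \to A_D$ over the dual numbers $D$. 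Since in characteristic $0$ the affine kernel $L(G)$ of the Albanese map of an anti-affine $G$ may contain unipotent factors (e.g.\ for a non-trivial extension of an elliptic curve by $\bG_a$), the identification $\bfHom_{\gp}(G,A) = \bfHom_{\gp}(A(G),A)$ is not justified; the triviality you need holds for tori (Proposition \ref{prop:tori}) but not for $\bG_a$. Second, Lemma \ref{lem:lex} applied to the Chevalley sequence only shows that $\bfHom_{\gp}(G,H_{\ant}) \to \bfHom_{\gp}(G,A)$ has trivial kernel, hence is a monomorphism of group functors; that is not a closed immersion, and a subfunctor of a representable (even \'etale) functor need not be representable. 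The image consists of the liftable homomorphisms, and nothing you have shown makes liftability a closed, or even representable, condition, so ``the closed subgroup scheme of $M$ cut out above'' has not actually been produced.

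Both gaps trace back to the same point: the absence of deformations of homomorphisms out of $G$ over an arbitrary (possibly non-reduced) base is precisely the content of the proposition, and it cannot be delegated to the abelian-variety case $\bfHom_{\gp}(A(G),A)$. The paper obtains it by a second application of Lemma \ref{lem:rig}: for $S$ connected and $f : G \times S \to H$ a homomorphism, the morphism $(x,s) \mapsto f(x,s)f(x,s_0)^{-1}$ kills $G_{\bar{k}} \times s_0$ and hence factors through the projection to $S_{\bar{k}}$, whence $f(x,s) = f(x,s_0)$ identically; thus every homomorphism is constant along $S$, the functor is the constant scheme on the set $\Hom_{\gp}(G,H)$, and this group is free of finite rank by \cite[Lem.~1.5(ii)]{Bri09}. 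If you wish to keep your d\'evissage, you would still need a direct rigidity statement of this kind to control the unipotent part and the liftability locus.
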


\begin{proof}
To show the equality of functors, we may assume that $k$ is
algebraically closed. We now adapt a classical argument
(see \cite[p.~43]{Mumford}). 
Let $S$ be a connected scheme, and $f : G \times S \to H$
a morphism that sends $e_G \times S$ to $e_H$. 
Consider the morphism
\[ F : G \times G \times S \longrightarrow H, \quad
(x,y,s) \longmapsto f(xy,s) f(y,s)^{-1} f(x,s)^{-1}. \]
Then $F(x,e_G,s) = F(e_G,x,s) = e_H$ for all $x \in G$ and $s \in S$.
Moreover, $G \times S$ is connected. By Lemma \ref{lem:rig},
it follows that $F(x,y,s) = e_H$ identically on $G \times G \times S$,
i.e., $f$ is a homomorphism.

It remains to show that $\bfHom_{\gp}(G,H)$ is represented by a
form of some $\bZ^n_k$. We first treat the case where $k$ 
is separably closed. Consider again a connected scheme $S$, and let
\[ f : G \times S \longrightarrow H \] 
be a homomorphism. Choose an $s_0 \in S(\bar{k})$; then the morphism
of $\bar{k}$-schemes
\[ G_{\bar{k}} \times_{\bar{k}} S_{\bar{k}} \longrightarrow H_{\bar{k}}, 
\quad (x,s) \longmapsto f(x,s) f(x,s_0)^{-1} \]
sends $G_{\bar{k}} \times_{\bar{k}} s_0$ to $e_H$, and hence factors 
through the projection 
$G_{\bar{k}} \times_{\bar{k}} S_{\bar{k}} \to S_{\bar{k}}$
by Lemma \ref{lem:rig}. Since $f(e_G,s) = e_H$ identically on $S$, 
it follows that $f(g,s) = f(g,s_0)$ identically on 
$G_{\bar{k}} \times_{\bar{k}} S_{\bar{k}}$. Thus, 
$f_{\bar{k}}$ factors through the projection 
$G_{\bar{k}} \times_{\bar{k}} S_{\bar{k}} \to G_{\bar{k}}$.
By fpqc descent, it follows that $f$ factors through the projection
$G \times S \to G$. This shows that $\bfHom_{\gp}(G,H)$ is represented 
by the constant scheme $\Hom_{\gp}(G,H)_k$. Since 
$\Hom_{\gp}(G,H^0_{\ant}) \stackrel{\sim}{\to} \Hom_{\gp}(G,H)$,
this yields the assertion in view of \cite[Lem.~1.5(ii)]{Bri09}.

The case of an arbitrary field $k$ follows by Galois descent.
Indeed, for any $f \in \Hom_{k_s-\gp}(G_{k_s},H_{k_s})$, there exist 
a finite Galois extension $K/k$ and a homomorphism of $K$-group
schemes $\varphi : G_K \to H_K$ such that $f = \varphi_{k_s}$,
since $f$ factors through the algebraic group $H^0_{k_s}$.
\end{proof}

\medskip \noindent
{\bf Completion of the proof of Theorem \ref{thm:rep}.} 
If $\bfHom(G,H)$ is representable, then the vector space $\cO(G)$ 
is finite-dimensional by Lemma \ref{lem:nonrig}. Conversely, assume
that $\cO(G)$ is finite-dimensional. Then we have an exact sequence
of algebraic groups
\[ 1 \longrightarrow G_{\ant} \longrightarrow G 
\longrightarrow F \longrightarrow 1, \]
where $F$ is finite. By \cite[Thm.~1.1]{Bri15}, there exists 
a finite subgroup scheme $F' \subset G$ such that $G = G_{\ant} F'$.
This yields an exact sequence of algebraic groups
\[ 1 \longrightarrow F'' \longrightarrow G_{\ant} \rtimes F' 
\longrightarrow G \longrightarrow 1, \]
where $F''$ is finite as well. In view of Corollary \ref{cor:descent},
we may thus assume that $G = G_{\ant} \rtimes F'$.
Then $G \simeq G_{\ant} \times F'$ as schemes, and hence
\[ \bfHom(G,H) \simeq \bfHom(G_{\ant},\bfHom(F',H)). \]
Since $\bfHom(F',H)$ is represented by a group scheme
(Lemma \ref{lem:fla}), we may further assume that $G$ is anti-affine.
Then the assertion follows from Lemma \ref{lem:sdir} and 
Proposition \ref{prop:rig}.

\section{Proof of Theorem \ref{thm:lred}}
\label{sec:prooflred}

We begin with some observations and structure results on
the class of linearly reductive groups. We will also consider 
the class of \emph{semi-reductive} groups: we say that an algebraic 
group is semi-reductive if its affinization $G^{\aff}$ 
is an extension of a finite group scheme by a reductive group
scheme. Both classes turn out to be closely related.

The \emph{affine} linearly reductive groups are well-understood:
if $p = 0$, they are exactly the extensions of finite group schemes
by reductive group schemes, i.e., the affine semi-reductive groups 
(see e.g.  \cite[IV.3.3.3]{DG}). If $p > 0$, then the affine linearly 
reductive groups are exactly the extensions 
\[ 1 \longrightarrow H \longrightarrow G \longrightarrow F
\longrightarrow 1, \]
where $F$ is a finite group scheme of order prime to $p$
and $H$ is a connected group scheme of multiplicative type
(see \cite[IV.3.3.6]{DG}). Moreover, $H$ has a largest
subtorus $T$, with Cartier dual being the quotient 
of the Cartier dual of $H$ by its torsion subgroup (see 
\cite[IV.1.3]{DG}). As a consequence, $T$ is characteristic
in $H$, and hence normal in $G$. So the affine linearly
reductive groups are exactly the extensions of finite linearly
reductive groups by tori. Thus, every affine linearly reductive 
group is semi-reductive, but the converse fails (if $p > 0$ again). 

As a consequence, \emph{every linearly reductive group is
semi-reductive; the converse holds if and only if $p = 0$.} 
If $p > 0$ then the linearly reductive groups are exactly
the extensions of finite linearly reductive groups by 
semi-abelian varieties (as follows from the fact that 
every anti-affine group is a semi-abelian variety,
see \cite[Prop.~2.2]{Bri09}). In particular, the
smooth connected linearly reductive groups are
exactly the semi-abelian varieties.

We now discuss the behavior of both classes under base 
change by a field extension $K/k$. By 
\cite[Prop. ~3.2]{Margaux}, an affine algebraic group $G$ is
linearly reductive if and only if so is $G_K$. Clearly, this 
invariance property also holds for anti-affine algebraic groups.
In view of the affinization theorem, it follows that 
\emph{an algebraic group $G$ is linearly reductive if and only 
if so is $G_K$.} 

Also, \emph{if an algebraic group $G$ is semi-reductive, 
then so is $G_K$.} 
The converse holds if $K/k$ is separable algebraic (by Galois 
descent), but fails for purely inseparable extensions. 
For example, if $k$ is separably closed but not algebraically
closed, then there exists a non-split extension of $\bG_m$ 
by the infinitesimal group scheme $\alpha_p$, see 
\cite[Exp.~XVII, 5.9 c)]{SGA3}. As every such extension
splits over $\bar{k}$, this yields an example of 
an affine algebraic group $G$ such that $G_{\bar{k}}$ is 
semi-reductive, but $G$ is not.

Next, we discuss the behavior of both classes under taking
quotients, normal subgroup schemes and extensions. Clearly, 
\emph{every quotient of a linearly reductive group is linearly 
reductive. The class of semi-reductive groups is also stable 
under quotients,} since so are the classes of anti-affine, 
reductive and finite group schemes. 

By \cite[Prop.~3.4]{Margaux}, the affine linearly reductive 
groups are also stable by normal subgroup schemes and extensions. 
But the class of linearly reductive groups is not
stable under normal subgroup schemes. To see this if $p = 0$,
consider a non-trivial extension $G$ of an elliptic curve $E$ by
$\bG_a$; then $G$ is anti-affine (see \cite[Prop.~2.3]{Bri09}),
but of course $\bG_a$ is not. If $p > 0$, let 
$H = \alpha_p \rtimes \bG_m$, where $\bG_m$ acts on 
$\alpha_p$ as its automorphism group.  Also, let $E$ be 
a supersingular elliptic curve; then $\alpha_p$ is isomorphic 
to a subgroup of $E$.  Consider the pushout
\[ \xymatrix{
1 \ar[r] & \alpha_p \ar[r] \ar[d] & H \ar[r] \ar[d] 
& \bG_m \ar[r]  \ar[d] & 1 \\
1 \ar[r] & E \ar[r]  & G \ar[r] & \bG_m \ar[r]  & 1. \\
} \]
Then $G$ is linearly reductive, but $H$ is not. These examples also
show that the class of semi-reductive groups is not stable under
normal subgroup schemes.

\emph{The class of linearly reductive groups is stable by extensions}.
Indeed, by a standard argument, an algebraic group $G$ is
linearly reductive if and only if the fixed point functor 
$V \mapsto V^G$ (from finite-dimensional $G$-modules to vector 
spaces) is exact. Moreover, for any exact sequence 
of algebraic groups $1 \to N \to G \to Q \to 1$ and any $G$-module 
$V$, we have $V^G = (V^N)^Q$.

In particular, if $p = 0$ then the class of semi-reductive groups 
is stable under extensions. This fails if $p > 0$ in view of 
the following:

\begin{example}\label{ex:nonext}
Let $V$ be a vector space of finite dimension $n \geq 1$. 
Consider the relative Frobenius morphism 
$F_{V/k} : V \to V^{(p)}$ and denote by $U$ its kernel;
then $U$ is an infinitesimal unipotent subgroup scheme of $V$, 
normalized by the natural action of $\GL(V)$. Form the 
semi-direct product $G = U \rtimes \GL(V)$; then $G$ is
clearly an extension of a reductive group scheme by a 
finite group scheme. But there is no exact sequence
\[ 1 \longrightarrow R \longrightarrow G \longrightarrow F
\longrightarrow 1, \]
where $R$ is reductive and $F$ is finite. Otherwise,
$R$ is the reduced subscheme $G^0_{\red}$, and hence
$R = \GL(V)$. As $R \triangleleft G$, it follows that 
$\GL(V)$ centralizes $U$, a contradiction. 
\end{example}

Next, we obtain several criteria for semi-reductivity:

\begin{proposition}\label{prop:sr}
Let $G$ be an algebraic group. Consider the conditions:

\begin{enumerate}

\item[{\rm (i)}] $G$ is semi-reductive.

\item[{\rm (ii)}] There exists an exact sequence of algebraic groups
\begin{equation}\label{eqn:long} 
1 \longrightarrow F_1 \longrightarrow G_1 \times G_2 
\longrightarrow G \longrightarrow F_2 \longrightarrow 1, 
\end{equation}
where $F_1$, $F_2$ are finite, $G_1$ is anti-affine, and
$G_2$ is reductive. 

\item[{\rm (iii)}] The affinization $G^{\aff}$ is linearly reductive.

\item[{\rm (iv)}] $G$ is smooth and $G_{\bar{k}}$ has no
non-trivial smooth connected unipotent normal subgroup.

\end{enumerate}

Then {\rm (iii)}$\Rightarrow${\rm (i)}$\Leftrightarrow${\rm (ii)}
and {\rm (iv)}$\Rightarrow${\rm (i)}. If $p =0$ then 
{\rm (i)}, {\rm (ii)}, {\rm (iii)} are equivalent.
If $p > 0$ then {\rm (i)}, {\rm (ii)}, {\rm (iv)} are equivalent
for $G$ smooth.
\end{proposition}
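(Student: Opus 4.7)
The plan is to establish the universal implications (iii)$\Rightarrow$(i), (i)$\Leftrightarrow$(ii), and (iv)$\Rightarrow$(i) in turn, and then the two characteristic-dependent converses. The implication (iii)$\Rightarrow$(i) is immediate from the structural description of affine linearly reductive groups recalled at the start of the section: in either characteristic, such a group is an extension of a finite linearly reductive group scheme by a reductive one, which is precisely semi-reductivity of $G^{\aff}$.

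For (ii)$\Rightarrow$(i), let $H$ denote the image of $G_1 \times G_2$ in $G$; it is normal with finite quotient $F_2$. Since $G_1$ is anti-affine, $(G_1 \times G_2)^{\aff} = G_2$, and so $H^{\aff}$ is a reductive quotient of $G_2$; passing to affinizations exhibits $G^{\aff}$ as an extension of a finite group (a subquotient of $F_2$) by $H^{\aff}$, yielding (i). Conversely, for (i)$\Rightarrow$(ii), set $G_1 := G_{\ant}$ and let $R = (G^{\aff})^0$ (reductive by (i)). The affinization sequence gives a central extension $1 \to G_{\ant} \to G^0 \to R \to 1$. The heart of the matter is to construct a reductive subgroup $G_2 \subset G^0$ with $G_2 \cdot G_{\ant} = G^0$ and $G_2 \cap G_{\ant}$ finite; I would build $G_2$ from the derived subgroup $\cD G^0$ (which surjects onto the semisimple $\cD R$ with kernel that is central in $\cD G^0$, hence finite by perfectness) together with a lift of the central torus of $R$ inside $\pi^{-1}(Z(R)^0)$. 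Theorem~1.1 of \cite{Bri15} then handles the component group by providing a finite subgroup of $G$ meeting every connected component, so that $F_2 = G/G^0$ enters the exact sequence as prescribed.

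For (iv)$\Rightarrow$(i), I would use the Chevalley-type decomposition $G^0 = G_{\ant} \cdot (G^0)_{\aff}$, valid for smooth $G$. The unipotent radical $R_u((G^0)_{\aff})$ is characteristic in $(G^0)_{\aff}$, hence in $G^0$, and therefore normal in $G$; being smooth, connected, and unipotent, (iv) forces it to vanish, so $(G^0)_{\aff}$ is reductive. The kernel of the resulting isomorphism $(G^0)_{\aff}/(G_{\ant} \cap (G^0)_{\aff}) \cong (G^{\aff})^0$ is a connected commutative affine subgroup of $G_{\ant}$, central in the reductive $(G^0)_{\aff}$; any vector-group part that might arise in characteristic $0$ is excluded because reductive groups admit no central $\bG_a$, so the kernel is a torus and $(G^{\aff})^0$ is reductive, giving (i).

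For the converses: in characteristic $0$, finite groups and reductive groups are both linearly reductive and the class is closed under extensions (as recalled in the discussion), so (i)$\Rightarrow$(iii). In characteristic $p$ with $G$ smooth, (i)$\Rightarrow$(iv) because any smooth connected unipotent normal subgroup $W$ of $G_{\bar k}$ maps to $R_u((G^{\aff})^0_{\bar k}) = 1$, hence $W \subset G_{\ant,\bar k}$; but anti-affine groups in positive characteristic are semi-abelian varieties, which contain no non-trivial smooth connected unipotent subgroup, so $W = 1$. I expect the main obstacle to be the construction of the reductive complement in (i)$\Rightarrow$(ii), and specifically the finiteness of $\cD G^0 \cap G_{\ant}$ together with the correct lift of the central torus; the other implications are direct applications of affinization and of the Chevalley decomposition.
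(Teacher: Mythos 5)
Most of your implications track the paper's proof closely and are essentially correct: (iii)$\Rightarrow$(i) is the same triviality; (ii)$\Rightarrow$(i) is the paper's argument in different words (the image of $G_1$ is $G_{\ant}$, so $G^{\aff}$ is an extension of $F_2$ by a reductive quotient of $G_2$); (iv)$\Rightarrow$(i) via the Rosenlicht decomposition and the normality of the unipotent radical of the affine part is exactly the paper's route; and so are (i)$\Rightarrow$(iii) in characteristic $0$ and (i)$\Rightarrow$(iv) in characteristic $p$ via the semi-abelian structure of $G_{\ant}$. Two small imprecisions: in characteristic $p$ the group $(G^{\aff})^0$ need not be reductive, since the finite quotient in the definition of semi-reductivity may have an infinitesimal part (compare Example \ref{ex:nonext}), so you must work with the reductive kernel $R$ of $G^{\aff}\to F$ rather than with $(G^{\aff})^0$; and the cokernel $F_2$ in (\ref{eqn:long}) is $G/G_0$ where $G_0$ is the image of $G_1\times G_2$ (possibly with infinitesimal part), so the appeal to \cite{Bri15} is not needed here.

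The genuine gap is in (i)$\Rightarrow$(ii), at the very point you flag: the finiteness of $\rD(G^0)\cap G_{\ant}$ does not follow from ``perfectness'' alone. One can indeed check that $\rD(G^0)$ is perfect (commutators in $G^0$ factor through $R$ because $G_{\ant}$ is central, and $\rD(R)$ is perfect), but a perfect smooth connected affine group surjecting onto a semisimple group with central kernel could a priori have a nontrivial \emph{central unipotent radical}; in characteristic $0$ this must be excluded by hand, because $G_{\ant}$ may contain a vector group. This is exactly where the paper brings in Mostow's theorem: writing $G^0_{\aff}=R_u(G^0_{\aff})\rtimes L$ with $L$ reductive, one finds $R_u(G^0_{\aff})\subset G_{\ant}$, hence $G^0=G_{\ant}L$, and the remaining central piece $G_{\ant}\cap L$ is of multiplicative type, from which a reductive complement $L'$ with $G_{\ant}\cap L'$ finite is split off. (Alternatively, the vanishing of degree-$2$ Hochschild cohomology of a reductive group with coefficients in a vector group would let you split the central unipotent radical off $\rD(G^0)$ and then conclude from perfectness; but some such input is indispensable.) In characteristic $p$ your scheme is essentially sound, since every affine subgroup of the semi-abelian variety $G_{\ant}$ is of multiplicative type, so $\rD(G^0)$ is a perfect central extension of $\rD(R)$ by a multiplicative-type group, hence semisimple with finite centre; this is in substance the paper's argument via $G=G_{\sab}\rD(G)$. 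Finally, your ``lift of the central torus'' needs one more step: the maximal torus of $\pi^{-1}(Z(R)^0)$ does surject onto $Z(R)^0$, but it may meet $G_{\ant}$ in a positive-dimensional torus, so you must pass to a complementary subtorus before forming $G_2$.
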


\begin{proof}
(ii)$\Rightarrow$(i) Cut the long exact sequence (\ref{eqn:long}) 
in two short exact sequences
\begin{equation}\label{eqn:ex}
1 \longrightarrow F_1 \longrightarrow G_1 \times G_2
\longrightarrow G_0 \longrightarrow 1, \quad
1 \longrightarrow G_0 \longrightarrow G \longrightarrow F_2
\longrightarrow 1,
\end{equation}
where $G_0$ is the largest smooth connected normal subgroup
scheme of $G$.
Denote by $N$ the schematic image of $G_1$ in $G_0$;
then $N = (G_0)_{\ant}$ in view of
\cite[Lem.~3.3.6]{Bri17}. Thus, $N = G_{\ant}$ is normal in $G$, 
and $G/N = G^{\aff}$ is an extension of the finite group scheme 
$F_2$ by the schematic image of $G_2$ in $G_0$; this image 
is a reductive group scheme. This yields the assertion.

(iii)$\Rightarrow$(i) Just recall that every 
linearly reductive group is semi-reductive.

(iv)$\Rightarrow$(i) 
It suffices to show that $G^0$ is semi-reductive. We claim that 
$G^0$ satisfies (iv). Indeed, $G^0$ is smooth since so is $G$.
Consider the largest smooth connected unipotent normal subgroup 
$U$ of $G^0_{\bar{k}}$; then $U$ is normalized by $G(\bar{k})$, 
and hence $U \triangleleft G_{\bar{k}}$. So $U$ is trivial, 
proving the claim.

Thus, we may assume that $G$ is connected. By the Rosenlicht
decomposition (see e.g. \cite[Thm.~5.5.1]{Bri17}), there exists 
a smooth connected affine algebraic $\bar{k}$-group 
$H \triangleleft G_{\bar{k}}$ such that 
$G_{\bar{k}} = (G_{\bar{k}})_{\ant} H$. Since 
$(G_{\bar{k}})_{\ant}$ is central in $G_{\bar{k}}$, we see that
the unipotent radical of $H$ is trivial, i.e., $H$ is reductive. 
So $G_{\bar{k}}/(G_{\bar{k}})_{\ant}$ is reductive as well.
Since the formation of $G_{\ant}$ commutes with field extensions,
it follows that $G/G_{\ant}$ is reductive.

For the remaining implications, we treat the cases $p = 0$ and
$p > 0$ separately as we use the structure of anti-affine groups,
which differs in both cases (see \cite[\S 2]{Bri09}).

Assume that $p = 0$. Then (i)$\Rightarrow$(iii) follows from 
the linear reductivity of affine semi-reductive groups.
We now show (i)$\Rightarrow$(ii). 
By assumption, we have an exact sequence
\begin{equation}\label{eqn:ex2} 
1 \longrightarrow G_{\ant} \longrightarrow G^0
\longrightarrow R \longrightarrow 1, 
\end{equation}
where $R$ is reductive. Consider again the Rosenlicht decomposition 
$G^0 = G_{\ant} G^0_{\aff}$. 
By the main result of \cite{Mostow}, there exists a Levi decomposition 
$G^0_{\aff} = R_u(G^0_{\aff}) \rtimes L$,
where $L$ is reductive, and $R_u$ denotes the unipotent radical. 
In view of (\ref{eqn:ex2}),  
it follows that $R_u(G^0_{\aff})$ is the largest unipotent subgroup 
of $G_{\ant}$, and $G^0 = G_{\ant} L$. This yields an exact sequence
\[ 1 \longrightarrow M \longrightarrow L 
\longrightarrow R \longrightarrow 1, \]
where $M = G_{\ant} \cap L$ is central in $L$, and hence of multiplicative 
type. So there exists a reductive subgroup scheme $L'$ of $L$ such that 
$L = M L'$ and $M \cap L'$ is finite. Thus, $G^0 = G_{\ant} L'$ and 
$G_{\ant} \cap L'$ is finite. Equivalently, we have an exact sequence
\[ 1 \longrightarrow G_{\ant} \cap L' \longrightarrow G_{\ant} \times L' 
\longrightarrow G^0 \longrightarrow 1, \]
which yields the assertion.

Next, we assume that $p > 0$, and show that (i)$\Rightarrow$(ii). Recall 
that $G_{\ant}$ is a semi-abelian variety (see \cite[Prop.~2.1]{Bri09}). 
We may assume that there is an exact sequence
$1 \longrightarrow G_{\ant} \longrightarrow G \longrightarrow R
\longrightarrow 1$,
where $R$ is reductive.  In particular, $G$ is smooth and connected;
hence its derived subgroup $\rD(G)$ is smooth, connected and affine.
Also, $R = T \rD(R)$ for some central torus $T$. Thus, the pullback of
$T$ in $G$ is the largest normal semi-abelian variety $G_{\sab}$; 
moreover,  $G = G_{\sab} \rD(G)$, and $\rD(G)$ is reductive. 
It follows that $G = G_{\ant} L$ for some reductive subgroup scheme $L$, 
and we conclude as above.

Still assuming that $p > 0$, we show that (i)$\Rightarrow$(iv)
when $G$ is smooth. We may assume that $k$ is algebraically
closed. Let $U$ be a smooth connected unipotent normal subgroup 
of $G$; then $U \cap G_{\ant}$ is finite. As the unipotent radical
of $G/G_{\ant} = G^{\aff}$ is trivial, this yields the assertion.
\end{proof}

(As already mentioned, the implication (i)$\Rightarrow$(iii)
fails if $p > 0$. Also, the implication (i)$\Rightarrow$(iv) 
fails if $p = 0$, as shown again by the example of a non-trivial 
extension of an elliptic curve by the additive group.)

We now obtain a version of Theorem \ref{thm:lred} in arbitrary
characteristic:

\begin{theorem}\label{thm:sred}
Let $G$ be a semi-reductive algebraic group, and $H$ a group scheme. 
Then $\bfHom_{\gp}(G,H)$ is represented by a scheme $M$
satisfying (FT). If $H$ is affine, then $M$ satisfies (AFT). 
Also, the morphism (\ref{eqn:graph}) is smooth if $G$ is linearly 
reductive and $H$ is smooth.
\end{theorem}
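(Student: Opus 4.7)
The plan is to combine the structural decomposition of semi-reductive groups in Proposition \ref{prop:sr} with the representability results of Section \ref{sec:rep}. First I reduce to the case $H$ is a connected algebraic group via Corollary \ref{cor:con}. Proposition \ref{prop:sr}(ii) then yields an exact sequence
\[ 1 \longrightarrow F_1 \longrightarrow G_1 \times G_2 \longrightarrow G \longrightarrow F_2 \longrightarrow 1, \]
with $F_1,F_2$ finite, $G_1$ anti-affine and $G_2$ reductive; write $N \subset G$ for the schematic image of $G_1 \times G_2$, so that $N = (G_1 \times G_2)/F_1$ and $G/N = F_2$.

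Next I would establish representability and the (FT)/(AFT) property for $\bfHom_{\gp}(N,H)$. By Lemma \ref{lem:hom} (trivial action), $\bfHom_{\gp}(G_1 \times G_2,H)$ is a closed subfunctor of $\bfHom_{\gp}(G_1,H) \times \bfHom_{\gp}(G_2,H)$; the first factor is \'etale by Proposition \ref{prop:rig} (hence satisfies (AFT) via Lemma \ref{lem:ft}(ii)), and the second satisfies (FT), and (AFT) when $H$ is affine, by Corollary \ref{cor:red}. Lemma \ref{lem:ft}(iii),(vi) then transfer these finiteness properties to $\bfHom_{\gp}(G_1 \times G_2,H)$. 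Since the quotient map $G_1 \times G_2 \to N$ is an fpqc $F_1$-torsor, Corollary \ref{cor:descent} combined with Lemma \ref{lem:gp} realises $\bfHom_{\gp}(N,H)$ as a closed subfunctor of $\bfHom_{\gp}(G_1 \times G_2,H)$, and Lemma \ref{lem:ft}(vi) propagates the finiteness.

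The main obstacle is to ascend from $N$ to $G$ across the finite cokernel $F_2$. My plan is to produce a finite subgroup scheme $F' \subset G$ that surjects onto $F_2$, so that $G = N \cdot F'$; this is analogous to the use of \cite[Thm.~1.1]{Bri15} in the proof of Theorem \ref{thm:rep} (where $G_{\ant}$ plays the role of $N$), and should follow from a similar structure-theoretic argument tailored to the semi-reductive setting, possibly via the Rosenlicht-type decomposition in the proof of Proposition \ref{prop:sr}. Once such an $F'$ is in hand, the multiplication morphism $N \rtimes F' \to G$ (where $F'$ acts on $N$ by conjugation in $G$) is surjective with finite kernel $N \cap F'$, hence an fpqc torsor under a finite group scheme. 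Lemma \ref{lem:hom} then embeds $\bfHom_{\gp}(N \rtimes F',H)$ as a closed subfunctor of $\bfHom_{\gp}(N,H) \times \bfHom_{\gp}(F',H)$, the second factor being affine of finite type by Proposition \ref{prop:finite}(iii). A final application of Corollary \ref{cor:descent} realises $\bfHom_{\gp}(G,H)$ as a closed subfunctor of $\bfHom_{\gp}(N \rtimes F',H)$, and the desired (FT)/(AFT) property descends.

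The smoothness assertion is immediate from Lemma \ref{lem:smooth}: linear reductivity of $G$ is equivalent to that of $G^{\aff}$ (see the opening of Section \ref{sec:prooflred}), and the concluding clause of Lemma \ref{lem:smooth} then yields smoothness of the conjugation morphism (\ref{eqn:graph}) whenever $H$ is smooth.
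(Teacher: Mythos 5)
Your proposal follows the paper's own route: decompose $G$ via Proposition \ref{prop:sr}(ii), treat the anti-affine and reductive factors by Proposition \ref{prop:rig} and Corollary \ref{cor:red}, glue with Lemma \ref{lem:hom} and Corollary \ref{cor:descent}, ascend across the finite quotient by a finite complement, and get smoothness from Lemma \ref{lem:smooth}. Two points need attention. First, your opening reduction to $H$ connected is not legitimate where you place it: Corollary \ref{cor:con} requires $G$ to be \emph{connected}, and a semi-reductive group need not be (any finite group scheme is semi-reductive, and its homomorphisms to $H$ certainly do not factor through $H^0$). The paper performs this reduction only \emph{after} passing to the connected normal subgroup $N=G_0$. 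In your argument the only place the connectedness of $H$ is actually used is the citation of Proposition \ref{prop:finite}(iii) for $\bfHom_{\gp}(F',H)$; this is easily repaired by instead invoking the remark preceding Proposition \ref{prop:finite} (which gives (FT) for finite $G$ and arbitrary $H$) together with Proposition \ref{prop:finite}(ii) for the (AFT) statement when $H$ is affine --- all the other ingredients (Proposition \ref{prop:rig}, Corollary \ref{cor:red}) are already stated for arbitrary $H$.

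Second, what you call the ``main obstacle'' --- producing a finite subgroup scheme $F'\subset G$ with $G=NF'$ --- is not something that needs a new argument ``tailored to the semi-reductive setting'': it is precisely the statement of \cite[Thm.~1]{Bri15}, which applies to \emph{any} normal subgroup scheme $N\triangleleft G$ with finite quotient, and this is exactly how the paper invokes it (with $N=G_0$ the schematic image of $G_1\times G_2$). As written, your proof leaves its one nontrivial input as a conjecture, even though the reference you name already covers it verbatim. With these two repairs the argument coincides with the paper's proof.
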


\begin{proof}
If $G$ is finite, then the first assertion follows from Lemmas 
\ref{lem:gp}, \ref{lem:ft} and \ref{lem:fla}. For an arbitrary $G$, 
consider again the two exact sequences (\ref{eqn:ex}), 
where $G_1$ is anti-affine, $G_2$ is reductive, and
$F_1$, $F_2$ are finite. Thus, there exists a finite subgroup 
scheme $F \subset G$ such that $G = G_0 F$ (see 
\cite[Thm.~1]{Bri15}). By arguing as in the proof of Theorem 
\ref{thm:rep} and using Corollary \ref{cor:descent} 
and Lemma \ref{lem:hom}, we may thus assume that $G = G_0$.
In particular, $G$ is connected. In view of Corollary 
\ref{cor:con}, we may further assume that $H$ is connected.
 
We now use again Corollary \ref{cor:descent} and Lemma
\ref{lem:hom} to reduce to the case where $G$ is either anti-affine 
or reductive. In the latter case, we conclude by Corollary 
\ref{cor:red}; in the former case, we use Proposition 
\ref{prop:rig}.

This shows the assertion about Property (FT); the proof
of the assertion about (AFT) is similar and left to the
reader. The final assertion follows from Lemma \ref{lem:smooth}.
\end{proof}

\begin{remark}\label{rem:margaux}
Assume that $k$ is algebraically closed of characteristic $0$.
Consider a semi-reductive group $G$ and a group scheme $H$. 
By Theorem \ref{thm:sred}, for any $m \in M(k)$, the orbit 
$H^0 \cdot m$ is open in the connected component of $m$ in $M$. 
Since every such orbit is connected, it follows that the connected 
components of $M$ are exactly the $H^0$-orbits of $k$-rational 
points. So the set of $H$-orbits in $M$ is in one-to-one
correspondence with the set of $k$-rational points of the quotient 
$\pi_0(M)/\pi_0(H)$, where $\pi_0(M)$ denotes the (constant) scheme 
of connected components. Also, since $k = \bar{k}$, the above set 
of $H$-orbits in $M$ may be identified with the orbit space 
$M(k)/H(k) = \Hom_{\gp}(G,H)/H(k)$.  As a consequence, we have 
\begin{equation}\label{eqn:vm} 
\Hom_{\gp}(G/H)/H(k) \stackrel{\sim}{\longrightarrow}
 \Hom_{K-\gp}(G_K,H_K)/H(K) 
\end{equation}
for any algebraically closed field extension $K/k$.

Next, assume that $k$ is algebraically closed of characteristic
$p > 0$, and consider a linearly reductive group $G$ and 
a smooth group scheme $H$.  Then all the above results hold
without any change, in view of Lemma \ref{lem:smooth} and 
Theorem \ref{thm:sred} again. 
The bijection (\ref{eqn:vm}) gives back the main result of 
\cite{Margaux} (Theorem 1.1; see also \cite[Prop.~10]{Vinberg}).
\end{remark}

\begin{remark}\label{rem:converse}
Theorem \ref{thm:sred} has a partial converse: let $G$ be an algebraic 
group and assume that for any smooth affine algebraic group $H$, the functor
$\bfHom_{\gp}(G,H)$ is represented by a scheme $M$ such that the 
morphism (\ref{eqn:graph}) is smooth. Then $G$ is linearly reductive.

Indeed, we have $H^1(G,\Lie(H)) = 0$ for any $f \in \Hom_{\gp}(G,H)$, 
where $G$ acts on $\Lie(H)$ via $\Ad \circ f$ (Lemma \ref{lem:smooth}).  
As a consequence, $H^1(G,\End(M)) = 0$ for any
finite-dimensional representation $f : G \to \GL(M)$. But every
finite-dimensional $G$-module $V$ is a summand of some $\End(M)$:
just take $M = V \oplus k$, where $G$ acts trivially on $k$. 
Thus, $H^1(G,V) = 0$ for any such $V$; this yields the assertion
by \cite[II.3.3.7]{DG} together with Remark \ref{rem:H1}.
\end{remark}

\medskip \noindent
{\bf Acknowledgments.} Many thanks to Mathieu Florence, 
Matthieu Romagny and Antoine V\'ezier for their careful reading 
of preliminary versions and for very helpful comments. 
Example \ref{ex:nonext} was suggested by Mathieu Florence. 
I thank Matthieu Romagny and Peng Du for pointing out a confusion 
between density and schematic density in the original proof of 
the rigidity lemma. Also, thanks to Philippe Gille for asking
a number of stimulating questions and for drawing my attention 
on the rigidity result of Margaux.  Finally, I thank the referee for
valuable remarks and comments.

\bibliographystyle{amsalpha}

\end{document}